\patchcmd{\subsection}{-.5em}{.5em}{}{}
\patchcmd{\subsubsection}{-.5em}{.5em}{}{}
\tikzset{
	every picture/.style = {line width=0.8pt},
	vertex/.style = {shape = circle, inner sep=1.35pt, fill=black}
}
\newcommand{\hgline}[3][very thick]{
\pgfmathsetmacro{\thetaone}{#2}
\pgfmathsetmacro{\thetatwo}{#3}
\pgfmathsetmacro{\theta}{(\thetaone+\thetatwo)/2}
\pgfmathsetmacro{\phi}{abs(\thetaone-\thetatwo)/2}
\pgfmathsetmacro{\close}{less(abs(\phi-90),0.0001)}
\ifdim \close pt = 1pt
    \draw[thick,#1] (\theta+180:1) -- (\theta:1)
\else\
    \pgfmathsetmacro{\R}{tan(\phi)}
    \pgfmathsetmacro{\distance}{sqrt(1+\R^2)}
    \draw[thick,#1] (\theta:\distance) circle (\R);
	\path (\theta:\distance) -- (\theta:\distance-\R-.08)
\fi
}
\newcommand{\ag}[1]{\mathbf{#1}}
\newcommand{\cat}[1]{\textnormal{#1}}
\newcommand{\term}[1]{\emph{#1}}
\newcommand{\R}{\mathbb{R}}
\newcommand{\C}{\mathbb{C}}
\newcommand{\Q}{\mathbb{Q}}
\newcommand{\Z}{\mathbb{Z}}
\newcommand{\N}{\mathbb{N}}
\newcommand{\congrightarrow}{\xrightarrow{\raisebox{-1pt}[0ex][0ex]{$\scriptscriptstyle\cong$}}}
\renewcommand{\O}{\mathcal{O}}
\renewcommand{\H}{\mathbb{H}}
\newcommand{\Hamilton}{\mathcal{H}}
\DeclareMathOperator{\SL}{SL}
\DeclareMathOperator{\GL}{GL}
\DeclareMathOperator{\SO}{SO}
\DeclareMathOperator{\rank}{rank}
\DeclareMathOperator{\res}{res}
\DeclareMathOperator{\Res}{Res}
\newcommand{\dR}{\operatorname{dR}}
\newcommand{\dRc}{\operatorname{dR},\operatorname{c}}
\newcommand{\product}{\textnormal{prod}}
\newcommand{\emb}{\textnormal{emb}}
\newcommand{\cent}{\textnormal{cent}}
\renewcommand{\a}{\mathfrak{a}}
\renewcommand{\b}{\mathfrak{b}}
\renewcommand{\AA}[1]{\mathbb{A}_{f,#1}}
\newcommand{\OO}[1]{\mathbb{O}_{f,#1}}
\declaretheorem[numberwithin=section]{theorem}
\declaretheorem[numberlike=theorem]{proposition}
\declaretheorem[numberlike=theorem]{corollary}
\declaretheorem[numberlike=theorem]{lemma}
\declaretheorem[numberlike=theorem,style=definition]{definition}
\declaretheorem[numberlike=theorem,style=definition]{remark}
\declaretheorem[numberlike=theorem,style=definition]{example}
\title[Geometric construction of homology classes]{Geometric construction of homology classes in Riemannian manifolds covered by products of hyperbolic planes}
\thanks{The author acknowledges funding by the Deutsche Forschungsgemeinschaft (DFG, German Research Foundation) - 281869850 (RTG 2229).}
\author{Pascal Zschumme}
\address{Karlsruhe Institute of Technology, 76131 Karlsruhe, Germany}
\email{pascal.zschumme@kit.edu}
\keywords{Homology, Geometric cycles, Locally symmetric spaces, Arithmetic groups, Quaternion algebras, Hyperbolic plane}
\subjclass[2010]{57T99, 11F75, 22E40, 53C35, 11R52}
\begin{document}

\vspace*{5ex} 

\begin{abstract}
We study the homology of Riemannian manifolds of finite volume that are covered by an $r$-fold  product $(\H^2)^r = \H^2 \times \ldots \times \H^2$ of hyperbolic planes.
Using a variation of a method developed by Avramidi and Nguyen-Phan, we show that any such manifold $M$ possesses, up to finite coverings, an arbitrarily large number of compact oriented flat totally geodesic $r$-dimensional submanifolds whose fundamental classes are linearly independent in the homology group $H_r(M;\Z)$.
\end{abstract}

\maketitle

\section{Introduction} \label{sec:introduction}

Let $M$ be a Riemannian manifold of finite volume that is covered by $(\H^2)^r = \H^2 \times \ldots \times \H^2$. If $r=1$, then $M$ is a hyperbolic surface and its homology is well understood. Otherwise, $M$ can be a complicated object. For example, let $d > 0$ be a square-free integer and consider the real quadratic field $F = \Q(\sqrt{d})$ with its two distinct embeddings $\sigma_1,\sigma_2 \colon F \hookrightarrow \R$. Let $\O_F$ be the ring of integers of $F$. Then the group $\SL_2(\O_F)$ acts properly discontinuously on the product $\H^2 \times \H^2$ by
\[
\gamma \cdot (z_1,z_2) := (\sigma_1(\gamma) \cdot z_1, \sigma_2(\gamma) \cdot z_2),
\]
where $\sigma_i(\gamma) \cdot z_i$ is the action of $\SL_2(\R)$ on $\H^2$ by fractional linear transformations.
For any torsion-free subgroup of finite index $\Gamma \subset \SL_2(\O_F)$, the quotient $\Gamma \backslash (\H^2 \times \H^2)$ 
is a Riemannian manifold of finite volume that is covered by $\H^2 \times \H^2$. It is called a \term{Hilbert modular surface} and is an irreducible locally symmetric space of higher rank.

The homology of such a locally symmetric space is in general hard to compute,
and even if one can do so, the geometric meaning of the homology classes is
often lost during the computation. We choose a more geometric approach going
back to Millson \cite{Millson}, in which one studies homology classes that are the fundamental classes of totally geodesic submanifolds.

Promising candidates for such submanifolds are the compact flat totally geodesic submanifolds of dimension equal to the rank of the locally symmetric space. It is known that these submanifolds exist in any nonpositively curved locally symmetric space of finite volume (see \cite[Proposition 5.1]{Pettet}). Furthermore, Pettet and Souto proved in \cite[Theorem 1.2]{Pettet} that they are \term{non-peripheral}, which means they cannot be homotoped outside of every compact subset of the locally symmetric space.
This suggests that these submanifolds might contribute to the homology of the locally symmetric space. Avramidi and Nguyen-Phan \cite{FlatCycles} have investigated this question for the locally symmetric space $M = \SL_n(\Z) \backslash \SL_n(\R) / \SO(n)$ and proved that, in fact, up to finite coverings, the compact oriented flat totally geodesic submanifolds of dimension equal to the rank of $M$ contribute to the free part of the homology group of $M$.

We prove the following theorem, which shows that this is also true for all Riemannian manifolds of finite volume that are covered by products of hyperbolic planes:

\begin{theorem} \label{thm:main}
Let $M$ be a Riemannian manifold of finite volume that is covered by $(\H^2)^r$. Then for any $n \in \N$, there exists a connected finite covering $M' \to M$ and compact oriented flat totally geodesic $r$-dimensional submanifolds $\mathcal{F}_1,\ldots,\mathcal{F}_n \subset M'$ such that the images of the fundamental classes $[\mathcal{F}_1],\ldots,[\mathcal{F}_n]$ in $H_r(M';\Z)$ are linearly independent.
\end{theorem}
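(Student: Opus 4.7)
My plan is to adapt the strategy of Avramidi and Nguyen-Phan from \cite{FlatCycles}. First, I would reduce to the case that $\Gamma:=\pi_1(M)$ is an irreducible arithmetic lattice. For $r=1$ the statement is classical (many simple closed geodesics on a suitable finite cover of any hyperbolic surface become linearly independent in homology). If $\Gamma$ is reducible, then after passing to a finite cover $M$ splits as a product of factors of the form $\Gamma_i\backslash (\H^2)^{r_i}$ with $r_i<r$, and the claim reduces to lower dimension via the Künneth theorem. So one may assume $\Gamma$ is irreducible and $r\geq 2$; by Margulis arithmeticity it is commensurable with the norm-one unit group of an order in a totally indefinite quaternion algebra $B$ over a totally real number field $F$ of degree $r$.

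Next, I would construct a large supply of compact $r$-flats. Such flats correspond to $\R$-split, $\Q$-anisotropic $\Q$-tori in $G:=\Res_{F/\Q}(B^{\times,1})$, which in turn arise from totally real quadratic extensions $L/F$ that embed into $B$. For each such $L$ the norm-one unit group of any order in $L$ has rank $r$ in $T_L(\R)^\circ$ by Dirichlet's theorem, producing a compact flat $\mathcal{F}_L\subset M$ after passing to a torsion-free cover. Varying $L$ over infinitely many such extensions yields infinitely many candidate flats.

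To prove linear independence, given a desired collection $\mathcal{F}_{L_1},\ldots,\mathcal{F}_{L_n}$, I would first invoke residual finiteness of $\Gamma$ to pass to a finite cover $M'\to M$ in which the lifts $\tilde\mathcal{F}_i$ are embedded, oriented, and pairwise disjoint. I would then construct dual $r$-dimensional totally geodesic cycles $D_1,\ldots,D_n\subset M'$ (taken either to be further compact flats from other extensions $L_j'/F$, or totally geodesic subspaces obtained by mixing diagonal $\H^2$-embeddings with geodesic factors) so that the geometric intersection matrix $\bigl(\#(\tilde\mathcal{F}_i\cap D_j)\bigr)$ is non-singular. Non-degeneracy of this pairing forces linear independence of the $[\tilde\mathcal{F}_i]$ in $H_r(M';\R)$ via Poincaré--Lefschetz duality, interpreted on the Borel--Serre compactification in the non-cocompact case; the non-peripherality of the flats (the Pettet--Souto theorem cited above) prevents the intersection from being trivialized by pushing cycles to infinity.

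The main obstacle will be the choice of the dual cycles together with the verification that the intersection matrix is non-singular. Carrying this out requires a delicate choice of the extensions $L_i$ and $L_j'$ so that the associated tori meet transversely in $G$, and an enumeration of the intersection points as a double coset in $\Gamma$ whose cardinality determines the matrix entries. I expect this to be where most of the technical work of the proof lies, combining Galois theory over $F$ with a local--global analysis in the algebraic group $G$.
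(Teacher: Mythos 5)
Your overall strategy matches the paper's at the level of reduction (Künneth plus Margulis to get to an irreducible arithmetic quotient, quaternion algebra picture, compact flats as the candidate cycles, a dual family of $r$-dimensional totally geodesic cycles, and a non-singular intersection matrix read off via Poincaré duality using compactly supported cohomology, hence effectively a Borel--Serre-type boundary in the non-compact case). What you flag as ``where most of the technical work lies'' is indeed where the paper spends almost all of its effort, but your sketch does not contain the idea that makes that step go through, and without it the argument does not close.

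The genuine gap is sign control. You write the target as a non-singular matrix $\bigl(\#(\tilde{\mathcal{F}}_i\cap D_j)\bigr)$, but what Poincaré duality pairs against is the \emph{signed} count of intersection points. Even if you arrange that $A_i$ and $B_j$ (or your $D_j$) meet transversally in a single point upstairs in $(\H^2)^r$, their images in any finite cover $\Gamma'\backslash(\H^2)^r$ generically meet in \emph{many} points, one for each $\Gamma'_B$-coset of $\gamma\in\Gamma'$ with $\gamma A_i\cap B_j\neq\emptyset$; a priori these points can contribute with opposite signs and cancel. The paper's solution is to pass to a congruence subgroup $\Gamma_{\textnormal{prod}}$ in which every such $\gamma$ factors as $\gamma=xy$ with $x\in C_{D^\times}(\beta)$, $y\in C_{D^\times}(\alpha)$ and $x$ acting by orientation-preserving isometries on $(\H^2)^r$ (\cref{thm:product-formula-closure,thm:product-formula-corollary}), which forces all local intersection numbers to agree (\cref{thm:intersection}). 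Crucially, the orientation-preserving condition on $x$ is achieved via Chevalley's congruence subgroup property for $\ag{GL}_1$ (\cref{thm:chevalley}), applied to the splitting field $E=F(\sqrt{a})$ after a careful adelic argument; your proposal contains nothing playing this role. Relatedly, asking for the lifts $\tilde{\mathcal{F}}_i$ to be \emph{pairwise disjoint} and then seeking separate dual cycles $D_j$ is not the structure the paper uses: it builds the $A_i$ and $B_j$ upstairs so that $A_i$ meets $B_j$ transversally in one point iff $i\le j$ (\cref{thm:configuration}), producing an upper triangular matrix directly. Also note two smaller divergences: the paper gets the dual flats $B_j$ from a \emph{single} fixed splitting element $x_0\in D^\times$ with $x_0^2=a$ together with a density/approximation argument, not from a family of varying quadratic extensions $L/F$; and the Pettet--Souto non-peripherality result, which you invoke to prevent cycles from being ``pushed to infinity,'' is cited in the paper only as motivation and plays no role in the actual proof.
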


In particular, this implies that the $r$th Betti number of $M$ can be made arbitrarily large by going to a finite covering space of $M$. We remark that this fact was already known. It follows from the non-vanishing of the $r$th $L^2$-Betti number of $M$ (see \cite[p.715]{Growth-L2-Invariants}) and L\"uck's approximation theorem. However, our result shows that the corresponding homology classes can be realized geometrically, which, to our knowledge, is a new result.

Our proof proceeds as follows: Using induction on $\dim(M)$, one finds irreducible Riemannian manifolds $M_1,\ldots,M_k$ for some $k \in \N$ and a finite covering $M_1 \times \ldots \times M_k \to M$. An application of the K\"unneth theorem for homology now shows that it suffices to prove \cref{thm:main} for irreducible manifolds.
So assume that $M$ is irreducible.
If $r = 1$, then $M$ is a hyperbolic surface and we can find a finite covering surface $M'$ of $M$ whose genus is at least $n$. The surface $M'$ then has $n$ distinct simple closed geodesics whose homology classes are linearly independent in $H_1(M';\Z)$. This proves \cref{thm:main} for $r=1$.

On the other hand, if $r > 1$, then Margulis' arithmeticity theorem implies that $M$ is \term{arithmetic}, which means that it is finitely covered by a quotient of $(\H^2)^r$ by an arithmetically defined lattice in $\SL_2(\R)^r$.

The goal of this article is to explain our proof of \cref{thm:main} for arithmetic manifolds.
It is structured as follows: In \cref{sec:algebraic-groups}, we fix our notation for algebraic groups, discuss arithmetically defined lattices, and state Margulis' arithmeticity theorem. In \cref{sec:quaternion-algebras}, we describe the arithmetically defined lattices in $\SL_2(\R)^r$ using quaternion algebras.
In \cref{sec:flats}, we study flats in symmetric spaces, and in \cref{sec:geometric-cycles} we discuss geometric cycles.
Finally, in \cref{sec:geometric-construction}, we describe our construction of the covering $M' \to M$ and the submanifolds $\mathcal{F}_1, \ldots, \mathcal{F}_n \subset M'$ for an arithmetic manifold $M$. This construction is based on the techniques developed by Avramidi and Nguyen-Phan in \cite{FlatCycles}.

The material covered in this article evolved from the author's doctoral thesis \cite{ZschummePhD}. There, the interested reader can find more details on our construction and the proof of \cref{thm:main}.

\section{Algebraic groups and Margulis' arithmeticity theorem} \label{sec:algebraic-groups}

We consider algebraic groups as special cases of group schemes and identify them with their functors of points (see \cite{Waterhouse,Milne-Book}). 
By this, we mean the following: Let $R$ be a commutative ring.
A \term{group scheme over $R$} is a functor $\ag{G} \colon \cat{Alg}_R \to \cat{Grp}$
from the category of commutative $R$-algebras to the category of groups whose composition with the forgetful functor $\cat{Grp} \to \cat{Set}$ is representable by a finitely generated $R$-algebra. We denote this algebra by $\O(\ag{G})$.
One can think of $\ag{G}$ as a group functor defined by polynomial equations with coefficients in $R$. In fact, by choosing a surjection $\pi \colon R[X_1,\ldots,X_n] \to \O(\ag{G})$, we obtain for each commutative $R$-algebra $A$
a natural inclusion
\[
\ag{G}(A) \congrightarrow \hom(\O(\ag{G}), A) \xhookrightarrow{\pi^*} \hom(R[X_1,\ldots,X_n], A) \congrightarrow A^n,
\]
\sloppy which identifies the group $\ag{G}(A)$ with the vanishing set of $\ker(\pi) \subset R[X_1,\ldots,X_n]$ in $A^n$.
For a topological $R$-algebra $A$, we put on $\ag{G}(A)$ the unique weakest topology for which the above (and thus any such) inclusion is continuous with respect to the product topology on $A^n$.

The \term{extension of scalars} of a group scheme $\ag{G}$ over $R$ to some ring extension $S/R$ is the group scheme $\ag{G}_S \colon \cat{Alg}_S \to \cat{Grp}$, $A \mapsto \ag{G}(\res_R(A))$. Here, $\res_R(A)$ denotes $A$ as an $R$-algebra. If the extension map $\sigma \colon R \hookrightarrow S$ is not clear from the context, then we write $\res_\sigma(A)$ instead of $\res_R(A)$ and $\ag{G}_\sigma$ instead of $\ag{G}_S$.

An \term{algebraic group} over a field $K$, or in short a \term{$K$-group}, is now simply a group scheme $\ag{G}$ over $K$.
Its group $\ag{G}(\overline{K})$ of points with values in an algebraic closure $\overline{K}$ of $K$ is then an affine variety in the space $\overline{K}^n$ equipped with the Zariski topology and the group operations are polynomial maps.
The algebraic group $\ag{G}$ is said to be \term{connected} or \term{finite} if this affine variety is connected or finite, respectively.

Let $L/K$ be a finite separable field extension. The \term{restriction of scalars} of an algebraic group $\ag{H}$ over $L$ to $K$ is the functor $\Res_{L/K}\ag{H} \colon \cat{Alg}_K \to \cat{Grp}$, $A \mapsto \ag{H}(A \otimes_K L)$. This is an algebraic group over $K$ (see \cite[p.~57]{Milne-Book}) and the natural isomorphism $K \otimes_K L \congrightarrow L$ induces an isomorphism of topological groups $(\Res_{L/K}\ag{H})(K) \congrightarrow \ag{H}(L)$.

Every algebraic group $\ag{G}$ over a number field $F$ has an \term{integral form}. This is a group scheme $\ag{G}_0$ over the ring of integers $\O_F$ of $F$ together with an $F$-isomorphism $(\ag{G}_0)_F \congrightarrow \ag{G}$.
A subgroup $\Gamma \subset \ag{G}(F)$ is called an \term{arithmetic subgroup} if it is commensurable with the image of $\ag{G}_0(\O_F)$ in $\ag{G}(F)$ for some integral form $\ag{G}_0$ of $\ag{G}$. This notion is independent of the choice of the integral form because the groups of $\O_F$-points of any two integral forms of $\ag{G}$ are commensurable with each other (see \cite[Proposition 4.1]{Platonov}).

By an important theorem of Borel and Harish-Chandra \cite{Borel-Harish-Chandra}, any arithmetic subgroup $\Gamma \subset \ag{H}(\Q)$ of a semisimple algebraic group $\ag{H}$ over $\Q$ is a lattice in $\ag{H}(\R)$. The following definition describes all lattices in the group of real points of an algebraic group over $\R$ that are constructed in this way:

\begin{definition} \label{def:arithmetically-defined-subgroup}
Let $\ag{G}$ be a connected semisimple $\R$-group without $\R$-anisotropic almost $\R$-simple factors.
An irreducible lattice $\Delta \subset \ag{G}(\R)^0$ is \term{arithmetically defined} if there exists
a connected almost $\Q$-simple $\Q$-group $\ag{H}$, an $\R$-epimorphism
\[
\Phi \colon \ag{H}_\R \twoheadrightarrow \ag{G}
\]
so that $(\ker{\Phi})(\R)$ is compact, and an arithmetic subgroup $\Gamma \subset \ag{H}(\Q)$ such that $\Delta$ is commensurable with $\Phi(\ag{H}(\Gamma))$.
\end{definition}

Non-arithmetically defined lattices are known to exist in some real algebraic groups of rank one. For example, in $\ag{SL}_2(\R)$ this follows from the existence of uncountably many non-commensurable hyperbolic surfaces (see \cite[p.~63]{ArithmeticGroupsWhatWhyHow}).

Margulis' celebrated arithmeticity theorem states that in a real algebraic group of higher rank, or similarly in a real Lie group of higher rank, all irreducible lattices are arithmetically defined (see \cite[Theorem~IX.1.11]{Margulis-DiscreteSubgroups}):

\begin{theorem}[Margulis' arithmeticity theorem] \label{thm:margulis-arithmeticity}
Let $\ag{G}$ be a connected semisimple $\R$-group without $\R$-anisotropic almost $\R$-simple factors and
with $\rank_\R(\ag{G}) > 1$. Then every irreducible lattice in $\ag{G}(\R)^0$ is arithmetically defined.
\end{theorem}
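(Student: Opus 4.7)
The plan is to derive the theorem from \emph{Margulis' superrigidity theorem}, which is the truly deep ingredient. Superrigidity asserts that for an irreducible lattice $\Delta$ in a connected semisimple real Lie group $G$ of real rank at least two and for any local field $k$, every finite-dimensional linear representation $\rho \colon \Delta \to \ag{GL}_n(k)$ whose image has noncompact Zariski closure extends, up to a bounded correction, to a continuous homomorphism of $G$. Granted superrigidity, arithmeticity of $\Delta$ is recovered by the construction outlined below.

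First I would pass to the adjoint group of $\ag{G}$ (which does not affect arithmeticity of $\Delta$) and invoke the Borel density theorem to arrange that $\Delta$ is Zariski dense in $\ag{G}$. Consider the field $F$ generated over $\Q$ by the matrix entries of $\operatorname{Ad}(\Delta) \subset \ag{GL}(\mathfrak{g})$ in a fixed basis; since $\Delta$ is finitely generated, $F$ is finitely generated over $\Q$. Applying archimedean superrigidity to the representations $\sigma \circ \operatorname{Ad}$ induced by abstract embeddings $\sigma \colon F \hookrightarrow \C$ and comparing with the fact that $\operatorname{Ad}$ is already continuously defined on $G$, one concludes that only finitely many such embeddings can occur, forcing $F$ to be a number field. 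Realizing $\ag{G}$ together with $\Delta$ over $F$ yields an $F$-form $\ag{G}'$ of $\ag{G}$, and setting $\ag{H} := \Res_{F/\Q}\ag{G}'$ produces an almost $\Q$-simple $\Q$-group. The real places of $F$ split $\ag{H}_\R$ into a product whose factor corresponding to the identity embedding maps onto $\ag{G}$; irreducibility of $\Delta$ forces the remaining real factors to be $\R$-anisotropic and hence compact, furnishing the desired epimorphism $\Phi \colon \ag{H}_\R \twoheadrightarrow \ag{G}$ with compact real kernel.

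To match $\Delta$ with an arithmetic subgroup I would apply non-archimedean superrigidity: for each finite place $v$ of $F$ the inclusion $\Delta \hookrightarrow \ag{G}'(F_v)$ must have relatively compact image, for otherwise superrigidity would extend it to a nontrivial continuous homomorphism from the connected real Lie group $\ag{G}(\R)^0$ into a totally disconnected group, which is absurd. Intersecting the resulting compact-open subgroups over all finite places pins $\Delta$ inside a subgroup commensurable with $\Phi(\ag{H}(\O_F))$, completing the identification with an arithmetically defined lattice in the sense of \cref{def:arithmetically-defined-subgroup}. The main obstacle, by a wide margin, is superrigidity itself: its proof is an ergodic-theoretic tour de force, constructing $\Delta$-equivariant measurable boundary maps from the Furstenberg-Poisson boundary $G/P$ into flag varieties of the algebraic hull of $\rho(\Delta)$, promoting these measurable maps to rational (hence algebraic) ones via the multiplicative ergodic theorem, and crucially exploiting the abundance of unipotent elements available only when $\rank_\R(\ag{G}) > 1$. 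The rank-one hypothesis is truly indispensable, as the existence of non-arithmetic lattices in $\ag{SL}_2(\R)$ already shows.
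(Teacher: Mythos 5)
The paper does not prove this theorem; it is invoked as a black box with a citation to \cite[Theorem~IX.1.11]{Margulis-DiscreteSubgroups}, so there is no in-paper proof to compare against. Your sketch is a standard and essentially correct high-level summary of Margulis' superrigidity-to-arithmeticity reduction, which is the argument the cited reference actually carries out. The place where the sketch is thinner than it first appears is the step from archimedean superrigidity to the conclusion that $F$ is a number field: one must first show that each trace $\operatorname{tr}(\operatorname{Ad}\gamma)$, $\gamma\in\Delta$, is an algebraic number, which already uses superrigidity applied to representations twisted by abstract automorphisms of $\C$ together with a boundedness argument; only after that does the finiteness-of-embeddings heuristic become a genuine proof that the trace field is finite over $\Q$. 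Likewise, the final commensurability claim requires Borel--Harish-Chandra (so that $\ag{H}(\O_F)$ is itself a lattice) together with the observation that a lattice contained in a larger discrete subgroup has finite index there, in order to upgrade containment to commensurability. These are standard ingredients of Margulis' proof, and you correctly identify superrigidity as the one genuinely deep input.
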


\section{Unit groups in quaternion algebras} \label{sec:quaternion-algebras}

The arithmetically defined lattices in $\SL_2(\R)^r$ can be described using quaternion algebras.
Let $K$ be a field of characteristic zero. A \term{quaternion algebra over $K$} is an algebra $D$ over $K$ for which there exists a vector space basis $\{1,i,j,k\}$ and $a,b \in K^\times$ such that
\begin{equation} \label{eq:quaternion-algebra-relations}
i^2 = a,
\quad
j^2 = b,
\quad
k = ij = -ji.
\end{equation}
We then call $\{1,i,j,k\}$ a \term{quaternionic basis} for $D$.
The \term{reduced norm} of an element $x = x_0 + x_1 i + x_2 j + x_3 k \in D$ is
\begin{equation} \label{eq:reduced-norm}
N(x) = x_0^2 - ax_1^2 - bx_2^2 + abx_3^2 \in K.
\end{equation}
The reduced norm of $x \in D$ is preserved by every automorphism of $D$ and is therefore independent of the choice of the quaternionic basis. For example, in the case of the matrix algebra $D = M_2(K)$, we have $N(x) = \det(x)$. An element $x \in D$ is invertible if and only if $N(x) \neq 0$. We write $D^1$ for the group of units of reduced norm one in $D$.

The \term{general linear group} associated to a quaternion algebra $D$ over $K$ is the algebraic group $\ag{GL}_D \colon \cat{Alg}_K \to \cat{Grp}$, $A \mapsto (A \otimes_K D)^\times$. We extend the reduced norm to tensor products $A \otimes_K D$ for $K$-algebras $A$ using \eqref{eq:reduced-norm} and define the \term{special linear group} $\ag{SL}_D \colon \cat{Alg}_K \to \cat{Grp}$, $A \mapsto (A \otimes_K D)^1$.

We now study quaternion algebras over number fields. Let $F$ be a number field and let $D$ be a quaternion algebra over $F$. The analog for $D$ of the ring of integers of a number field is an \term{order} $\Lambda \subset D$. This is a subring $\Lambda$ of $D$ which is a finitely generated $\O_F$-submodule of $D$ and spans $D$ over $F$. For example, the ring $M_2(\O_F)$ is an order of the matrix algebra $M_2(F)$. A quaternion algebra has many orders, but the groups of units of any two of its orders are commensurable with each other (see \cite[Lemma 4.6.9]{GroupRingGroupsI}).

Let $\Lambda \subset D$ be an order. We define the \term{general linear group} $\ag{GL}_\Lambda \colon \cat{Alg}_{\O_F} \to \cat{Grp}$, $A \mapsto (A \otimes_{\O_F} \Lambda)^\times$. This is a group scheme over $\O_F$ and an integral form of the algebraic group $\ag{GL}_D$.
If $\Lambda$ is the $\O_F$-span of a quaternionic basis for $D$, then we again extend the reduced norm using \eqref{eq:reduced-norm} and define the \term{special linear group} $\ag{SL}_\Lambda \colon \cat{Alg}_{\O_F} \to \cat{Grp}$, $A \mapsto (A \otimes_{\O_F} \Lambda)^1$, which is an integral form of $\ag{SL}_D$. It follows that for any order $\Lambda \subset D$, the group of units of reduced norm one $\Lambda^1 := \Lambda \cap D^1$ is an arithmetic subgroup of $\ag{SL}_D(F)$.

For any real embedding $\sigma \colon F \hookrightarrow \R$, the algebra $D \otimes_F \res_\sigma(\R)$ is either isomorphic to $M_2(\R)$ or is a division algebra. In the first case, we say $D$ is \term{split at $\sigma$} and otherwise \term{ramified at $\sigma$}. This leads us to the following definition:

\begin{definition} \label{def:subgroup-derived-from-quaternion-algebra}
A subgroup $\Delta \subset \SL_2(\R)^r$ is said to be \term{derived from a quaternion algebra} if there exists
a totally real number field $F$, a quaternion algebra $D$ over $F$ that is split at exactly $r$ distinct real embeddings $\sigma_1,\ldots,\sigma_r \colon F \hookrightarrow \R$, an isomorphism $\tau_i \colon D \otimes_F \res_{\sigma_i}(\R) \congrightarrow M_2(\R)$ for each $i \in \{1,\ldots,r\}$, and an order $\Lambda \subset D$ such that
\[
\Delta = \bigl\{ \bigl(\tau_1(x), \ldots,\tau_r(x)\bigr) : x \in \Lambda^1 \bigr\}.
\]
\end{definition}

\begin{remark} \label{rem:derived-commensurable}
The maps $\tau_i \colon D \otimes_F \res_\sigma(\R) \congrightarrow M_2(\R)$ in \cref{def:subgroup-derived-from-quaternion-algebra} are not uniquely determined, but any two choices for $\tau_i$ differ only by conjugation with a matrix in $\GL_2(\R)$ by the Skolem--Noether theorem. So up to commensurability and conjugation in $\GL_2(\R)^r$, the subgroup derived from a quaternion algebra $D$ depends only on the isomorphism class of $D$.
\end{remark}

A subgroup derived from a quaternion algebra is readily seen to be an arithmetically defined lattice in $\SL_2(\R)^r = (\ag{SL}_2 \times \ldots \times \ag{SL}_2)(\R)$. In fact, every arithmetically defined lattice in $\SL_2(\R)^r$ comes from a quaternion algebra (see \cite[Theorem 5.44]{ZschummePhD}):

\begin{proposition} \label{thm:classification-arithmetically-defined-lattices}
A subgroup of $\SL_2(\R)^r$ is an arithmetically defined lattice if and only if it is commensurable with a subgroup derived from a quaternion algebra.
\end{proposition}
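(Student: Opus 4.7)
The plan is to prove both directions separately. For the forward direction, given data $(F, D, \sigma_1, \ldots, \sigma_r, \tau_1, \ldots, \tau_r, \Lambda)$ as in \cref{def:subgroup-derived-from-quaternion-algebra}, I would set $\ag{H} := \Res_{F/\Q} \ag{SL}_D$. Since $\ag{SL}_D$ is absolutely almost simple over $F$, restriction of scalars yields a connected almost $\Q$-simple $\Q$-group. Using $F \otimes_\Q \R \cong \prod_\sigma \res_\sigma(\R)$ indexed by the (all real) embeddings of $F$, one finds
\[
\ag{H}(\R) \cong \prod_\sigma (D \otimes_F \res_\sigma(\R))^1,
\]
where the split $\sigma_i$ contribute factors $\SL_2(\R)$ (identified via $\tau_i$), and the ramified real embeddings contribute compact groups isomorphic to $\operatorname{SU}(2)$. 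Projection onto the $r$ split factors yields an $\R$-epimorphism $\Phi\colon \ag{H}_\R \twoheadrightarrow \ag{SL}_2^r$ with compact real kernel, and $\Lambda^1$ is an arithmetic subgroup of $\ag{H}(\Q) = \ag{SL}_D(F)$ with $\Phi(\Lambda^1) = \Delta$, so $\Delta$ is arithmetically defined.

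For the reverse direction, let $\Delta \subset \SL_2(\R)^r$ be arithmetically defined with data $(\ag{H}, \Phi, \Gamma)$. By the structure theorem for almost $\Q$-simple $\Q$-groups, there exist a number field $F$ and an absolutely almost simple $F$-group $\ag{H}'$ with $\ag{H} \cong \Res_{F/\Q} \ag{H}'$, so
\[
\ag{H}(\R) \cong \prod_v \ag{H}'(F_v)
\]
over the archimedean places $v$ of $F$. Since $\ker \Phi$ has compact real points and $\Phi$ surjects onto $\SL_2(\R)^r$, every noncompact almost simple factor on the right is isogenous to $\SL_2(\R)$; as $\SL_2(\C)$ is not, no complex place occurs and $F$ is totally real. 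Hence $\ag{H}'$ is an absolutely almost simple $F$-form of $\SL_2$, and the classification of such forms by quaternion algebras identifies $\ag{H}' \cong \ag{SL}_D$ for some quaternion algebra $D$ over $F$, split at exactly the $r$ real embeddings producing noncompact factors. Choosing any order $\Lambda \subset D$, the arithmetic subgroup $\Lambda^1 \subset \ag{H}(\Q)$ is commensurable with $\Gamma$, so $\Phi(\Lambda^1)$ is commensurable with $\Delta$; by \cref{rem:derived-commensurable}, this image agrees up to conjugation in $\GL_2(\R)^r$ with the subgroup derived from $D$.

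The chief obstacle lies in this second direction, which rests on two standard but nontrivial inputs from the theory of algebraic groups: the presentation of almost $\Q$-simple $\Q$-groups as restrictions of scalars of absolutely almost simple factors, and the classification of $F$-forms of $\SL_2$ as special unit groups $\ag{SL}_D$ of quaternion algebras (a Galois-cohomology computation). Granted these, the remaining care is in matching the archimedean data—verifying totality of $F$, counting split places, and adjusting the local isomorphisms up to $\GL_2(\R)$-conjugacy as permitted by \cref{rem:derived-commensurable}.
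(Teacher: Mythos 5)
Your proposal follows the standard route, and since the paper cites its own proof of this proposition from the author's thesis rather than giving it inline, there is no in-text argument to compare with; but the strategy you sketch — Weil restriction to pass between almost $\Q$-simple $\Q$-groups and absolutely almost simple groups over a number field, together with the identification of $F$-forms of $\ag{SL}_2$ with the groups $\ag{SL}_D$ — is exactly the expected one, and your handling of the archimedean bookkeeping (total reality of $F$, counting split places, invoking \cref{rem:derived-commensurable} to absorb the $\GL_2(\R)$-conjugacy ambiguity) is correct.

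Two points deserve to be spelled out. First, in the direction from a quaternion algebra to an arithmetically defined lattice, \cref{def:arithmetically-defined-subgroup} requires $\Delta$ to be an \emph{irreducible} lattice; you should note that $\Phi(\Lambda^1)$ is indeed a lattice (Borel--Harish-Chandra) and irreducible (because $\ag{H}=\Res_{F/\Q}\ag{SL}_D$ is almost $\Q$-simple). Second, in the reverse direction, when you conclude from the isogeny type of the noncompact factors that $\ag{H}'$ is a form of $\ag{SL}_2$, you still need to know that $\ag{H}'$ is simply connected before appealing to the cohomological classification to get $\ag{H}' \cong \ag{SL}_D$ rather than $\ag{PGL}_D$. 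This is genuine: an adjoint form $\ag{PGL}_D$ is also an absolutely almost simple $F$-group of type $A_1$. The way to rule it out is to observe that the restriction of $\Phi$ to a noncompact almost $\R$-simple factor of $\ag{H}_\R$ is an isogeny onto the simply connected group $\ag{SL}_2$, hence an isomorphism; since simple connectedness is a geometric property, this forces $\ag{H}'$ itself to be simply connected. With these two points filled in, the argument is complete.
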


We write $D = (a,b)_F$ for the quaternion algebra determined by the relations in \eqref{eq:quaternion-algebra-relations}.
The field $E := F(\sqrt{a})$ is then a \term{splitting field for $D$}. By this, we mean that it is a field extension $E/F$ so that $D \otimes_F E \cong M_2(E)$. A quadratic extension $E/F$ is a splitting field for $D$ if and only if for every place $v$ of $F$ where $D$ is ramified, the local completion $E_v/F_v$ is a quadratic extension (see \cite[Theorem~7.3.3 and its proof]{Arithmetics-Reid}). 
Here, $E_v$ is the completion of $E$ at some place of $E$ lying above $v$. Note that for any two places of $E$ lying above $v$, the corresponding completions of $E$ are $F_v$-isomorphic to each other by \cite[Proposition~II.9.1]{Neukirch}, which is why we denote it just by $E_v$. Moreover, for any $a \in F^\times$ for which $F(\sqrt{a})$ is a splitting field for $D$, there exist some $b \in F^\times$ with $D \cong (a,b)_F$ (see \cite[Proposition~1.2.3]{CSAGaloisCohomology}).

Next, we show that the constants $a,b \in F^\times$ defining the isomorphism class of the quaternion algebra $(a,b)_F$ can always be chosen in a certain way. We will need this in our computations later in \cref{sec:geometric-construction}.

\begin{lemma} \label{thm:quaternion-algebra-trick}
Let $F$ be a number field. Then for every quaternion algebra $D$ over $F$, there exist $a,b \in \O_F$ in the ring of integers of $F$  such that $D$ is isomorphic to $(a,b)_F$ and such that for any real embedding $\sigma \colon F \hookrightarrow \R$ at which $D$ is split, we have $\sigma(a) > 0$.
\end{lemma}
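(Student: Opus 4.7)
The plan is to translate the two requirements on $a$ into a finite collection of local open conditions, produce such an $a \in F^\times$ by weak approximation, clear denominators to land in $\O_F$, and then invoke the result quoted just above the lemma to produce $b$ (rescaling it by a square to clear its denominator as well).

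By the splitting criterion recalled in the excerpt, $F(\sqrt{a})$ is a splitting field for $D$ if and only if $a \notin (F_v^\times)^2$ at every place $v$ of $F$ at which $D$ ramifies. The set $S$ of ramified places is finite. At a real embedding $\tau \in S$, where $D \otimes_F F_\tau \cong \H$, this amounts to $\tau(a) < 0$; at a finite $v \in S$ it is the bare non-square condition, which is \emph{open} in the $v$-adic topology because $(F_v^\times)^2$ is an open subgroup of finite index in $F_v^\times$. In addition, the lemma demands $\sigma(a) > 0$ at every real embedding $\sigma$ at which $D$ is split, which is likewise an open local condition.

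With these conditions identified, I would pick, for each finite $v \in S$, some non-square $a_v \in F_v^\times$, and, for each real embedding $\sigma$ of $F$, a real target $a_\sigma$ of the prescribed sign (negative if $\sigma \in S$, positive otherwise). Weak approximation for $F$ applied to this finite set of places yields $a_0 \in F^\times$ whose image at each of these places lies in a prescribed open neighborhood of the chosen target, and by the openness noted above $a_0$ then satisfies all the required local conditions. To clear the denominator, choose $c \in \O_F \setminus \{0\}$ with $c \cdot a_0 \in \O_F$ and set $a := a_0 c^2 \in \O_F$; since $a/a_0$ is a square in $F^\times$, the new $a$ has the same class in each $F_v^\times/(F_v^\times)^2$ and the same sign at each real embedding, so the local conditions persist. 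Finally, by the consequence of \cite[Proposition~1.2.3]{CSAGaloisCohomology} recalled in the paragraph preceding the lemma, there exists $b_0 \in F^\times$ with $D \cong (a, b_0)_F$; choosing $d \in \O_F \setminus \{0\}$ with $b := b_0 d^2 \in \O_F$ and rescaling the second basis generator as $j \mapsto d j$ gives the isomorphism $(a, b_0)_F \cong (a, b)_F$, so both $a, b$ lie in $\O_F$.

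The only step that needs more than routine care is the openness of the non-square condition at the finite ramified places; once that is in hand, the argument is a standard application of weak approximation and I do not anticipate any serious obstacle.
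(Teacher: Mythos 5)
Your proof is correct, and it takes a genuinely different (and more elementary) route than the paper. Where you apply weak approximation directly to the element $a$ — turning the requirement that $F(\sqrt{a})$ split $D$ into finitely many open local conditions at the ramified and real places — the paper instead invokes the Grunwald--Wang theorem to manufacture a quadratic extension $E/F$ with prescribed local degrees at the ramified places and trivial completions at the split real places, and only afterwards writes $E = F(\sqrt{a})$ and reads off $\sigma(a)>0$ from $E_v \cong \R$. The net effect is the same (both arguments produce a splitting field of the form $F(\sqrt{a})$ with the right sign behavior, then appeal to the same fact to produce $b$ and rescale by squares to land in $\O_F$), but Grunwald--Wang is a substantially deeper input — it concerns arbitrary cyclic extensions and carries its own subtleties — whereas for prescribing a quadratic extension at finitely many places, weak approximation plus the openness of $(F_v^\times)^2$ in $F_v^\times$ is all one needs, as you observe. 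Your version also cleans up the denominator-clearing slightly by treating $a$ and $b$ independently via $(a,b)_F \cong (c^2a,b)_F \cong (a,d^2b)_F$, whereas the paper rescales both at once via $(a,b)_F \cong (c^2a,c^2b)_F$; both are fine. One small point worth making explicit: your argument silently assumes the ramified set is nonempty (otherwise $a_0$ could be a square in $F$, making $F(\sqrt{a_0})=F$), but in that degenerate case $D \cong M_2(F) \cong (1,1)_F$ handles the lemma directly, so nothing is lost.
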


\begin{proof}
We use the Grunwald--Wang theorem \cite[p.~29]{Roquette} to construct a splitting field for $D$. By this theorem, there exists a quadratic extension $E/F$ with the following two properties:
\begin{enumerate}
\item For every place $v$ of $F$ where $D$ is ramified, $E_v/F_v$ is a quadratic extension.
\item For every real place $v$ of $F$ where $D$ is split, $E_v/F_v$ is a trivial extension.
\end{enumerate}
Then $E/F$ is a splitting field for $D$ by the criterion stated above.
Moreover, since $E/F$ is a quadratic extension, we have $E = F(\sqrt{a})$ for some $a \in F^\times$.
Let $\sigma \colon F \hookrightarrow \R$ be a real embedding at which $D$ is split. By the second property, we have $E_v \cong \R$ for the place $v$ of $F$  corresponding to $\sigma$. Hence the image of the extension $\widetilde{\sigma} \colon E \hookrightarrow \C$ of $\sigma$ given by $\widetilde{\sigma}(x + y\sqrt{a}) = \sigma(x) + \sigma(y)\sqrt{\sigma(a)}$ for $x,y \in F$ stays in $\R$, and so we must have $\sigma(a) > 0$.

Since $E/F$ is a splitting field for $D$, we find $b \in F^\times$ with $D \cong (a,b)_F$. Finally, we can also achieve that $a,b \in \O_F$ because of $(a,b)_F \cong (c^2a,c^2b)_F$ for all $c \in F^\times$ by \cite[p.~78]{Arithmetics-Reid}.
\end{proof}

\section{Adeles and congruence subgroups} \label{sec:adeles-and-congruence-subgroups}

We will use adeles and congruence subgroups to construct subgroups of finite index in arithmetic groups.
Let $F$ be a number field with ring of integers $\O_F$. We denote by $\AA{F}$ the ring of finite adeles of $F$ and by $\OO{F}$ the ring of integral finite adeles of $F$ (see \cite[pp.~10--13]{Platonov}). We consider $F$ as a subring of $\AA{F}$ by the diagonal embedding $F \hookrightarrow \AA{F}$, and similarly $\O_F$ as a subring of $\OO{F}$ by the diagonal embedding $\O_F \hookrightarrow \OO{F}$. This turns $\AA{F}$ and $\OO{F}$ into topological algebras over $F$ and $\O_F$, respectively.

Let now $\ag{G}$ be a group scheme over $\O_F$. For a subgroup $U \subset \ag{G}(\OO{F})$ and a nonzero ideal $\a \subset \O_F$, we write
\[
U(\a) := \ker\bigl(U \to \ag{G}(\OO{F}/\a\OO{F})\bigr),
\]
where $\a\OO{F}$ is the ideal in $\OO{F}$ generated by $\a$. We have (see \cite[Proposition 4.39]{ZschummePhD}):

\begin{proposition} \label{thm:adelic-points-basis}
Let $\ag{G}$ be a group scheme over the ring of integers $\O_F$ of a number field $F$. Then the family of groups $\ag{G}(\OO{F})(\a)$ for all nonzero ideals $\a \subset \O_F$ is a basis of open neighborhoods of the identity in both of the groups $\ag{G}(\OO{F})$ and $\ag{G}(\AA{F})$.
\end{proposition}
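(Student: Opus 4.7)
The plan is to exploit the coordinate embeddings $\ag{G}(R) \hookrightarrow R^n$ from \cref{sec:algebraic-groups}. Fix a surjection $\pi \colon \O_F[X_1,\ldots,X_n] \twoheadrightarrow \O(\ag{G})$; for every $\O_F$-algebra $R$ this gives a natural inclusion $\ag{G}(R) \hookrightarrow R^n$, and for a topological $\O_F$-algebra $R$ the topology on $\ag{G}(R)$ is by definition the subspace topology from $R^n$. The identity section of $\ag{G}$ corresponds to an $\O_F$-algebra map $\O(\ag{G}) \to \O_F$ which, composed with $\pi$, yields a fixed point $e \in \O_F^n$ whose image in $R^n$ is the identity of $\ag{G}(R)$ for every $R$.

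First I identify $\ag{G}(\OO{F})(\a)$ with $\ag{G}(\OO{F}) \cap (e + (\a\OO{F})^n)$. Under the coordinate embeddings the reduction map $\ag{G}(\OO{F}) \to \ag{G}(\OO{F}/\a\OO{F})$ is simply the coordinate-wise reduction $\OO{F}^n \to (\OO{F}/\a\OO{F})^n$, and it carries the identity to the identity; hence $\gamma \in \ag{G}(\OO{F})(\a)$ if and only if $\gamma - e \in (\a\OO{F})^n$.

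Next I verify that the ideals $\a\OO{F}$ form a basis of neighborhoods of $0$ in $\OO{F}$. Writing $\OO{F} = \prod_v \O_{F_v}$ as a topological product over the finite places of $F$, any basic open neighborhood of $0$ has the form $\prod_{v \in S} \p_v^{n_v}\O_{F_v} \times \prod_{v \notin S} \O_{F_v}$ for a finite set $S$ of places and integers $n_v \geq 0$, and this equals $\a\OO{F}$ for the ideal $\a = \prod_{v \in S} \p_v^{n_v}$ of $\O_F$. Combining this with the preceding step shows that the $\ag{G}(\OO{F})(\a)$ form a basis of open neighborhoods of the identity in $\ag{G}(\OO{F})$.

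For the adelic statement I show that $\ag{G}(\OO{F})$ is already open in $\ag{G}(\AA{F})$. Since $\OO{F}$ is open in $\AA{F}$ and $\O(\ag{G})$ is generated over $\O_F$ by the images of the $X_i$, an $\O_F$-algebra homomorphism $\O(\ag{G}) \to \AA{F}$ factors through $\OO{F}$ precisely when each $X_i$ is sent into $\OO{F}$; hence $\ag{G}(\OO{F}) = \ag{G}(\AA{F}) \cap \OO{F}^n$ is open in $\ag{G}(\AA{F})$. Because an open subgroup of a topological group is its own neighborhood basis at the identity, the conclusion transfers from $\ag{G}(\OO{F})$ to $\ag{G}(\AA{F})$. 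The main care needed is in keeping straight the two different topologies — the restricted product on $\AA{F}$ versus the full product on $\OO{F}$ — and in making the identity-section argument above precise; beyond that I expect no serious obstacle.
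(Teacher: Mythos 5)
The paper does not prove this proposition in the text; it simply cites \cite[Proposition 4.39]{ZschummePhD}, so there is no in-paper argument to compare against. Your proof is correct and is the natural argument: the identification of $\ag{G}(\OO{F})(\a)$ with $\ag{G}(\OO{F})\cap\bigl(e+(\a\OO{F})^n\bigr)$ under the coordinate embedding, the observation that the ideals $\a\OO{F}$ form a neighborhood basis of $0$ in $\OO{F}=\prod_v\O_{F_v}$, and the passage to $\AA{F}$ by noting that $\ag{G}(\OO{F})=\ag{G}(\AA{F})\cap\OO{F}^n$ is an open subgroup all work as you describe. The only detail you wave at rather than verify is the identity $\a\OO{F}=\prod_{v\in S}\p_v^{n_v}\O_{F_v}\times\prod_{v\notin S}\O_{F_v}$ for $\a=\prod_{v\in S}\p_v^{n_v}$; this is a standard fact about extending ideals of a Dedekind domain to a product of its completions, and in a polished write-up you would want a sentence justifying it (for instance via the isomorphism $\OO{F}/\a\OO{F}\cong\O_F/\a$ or by observing that $\a$ is finitely presented, hence tensoring with $\a$ commutes with the infinite product). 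With that remark supplied, the argument is complete.
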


For a subgroup of the integral points $\Gamma \subset \ag{G}(\O_F)$, the group $\Gamma(\a)$ is the kernel of the map $\Gamma \to \ag{G}(\O_F/\a\O_F)$ and has finite index in $\Gamma$. We call $\Gamma(\a)$ the \term{principal congruence subgroup of $\Gamma$ of level $\a$}.
More generally, any subgroup of $\Gamma$ that contains a principal congruence subgroup of $\Gamma$ has finite index in $\Gamma$ and is called a \term{congruence subgroup of $\Gamma$}.

In some group schemes $\ag{G}$, the group $\ag{G}(\O_F)$ has finite index subgroups which are not congruence subgroups. Examples of such subgroups in $\ag{SL}_2(\Z)$ were already known to Fricke and Klein in the $19$th century (see \cite[p.~299]{CongruenceSubgroupProblem}). The group scheme $\ag{G}$ is said to have the \term{congruence subgroup property} if every subgroup of finite index in $\ag{G}(\O_F)$ is a congruence subgroup. Chevalley \cite{Chevalley} proved in 1951 that $\ag{GL}_1$ has this property:

\begin{theorem}[Chevalley] \label{thm:chevalley}
Let $F$ be a number field. Then for every subgroup of finite index $\Gamma \subset \O_F^\times$, there exists a nonzero ideal $\a \subset \O_F$ such that $\O_F^\times(\a) \subset \Gamma$.
\end{theorem}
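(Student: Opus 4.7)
The plan is to reduce the theorem to showing that for every integer $n \geq 1$, the subgroup $(\O_F^\times)^n$ of $n$-th powers contains a principal congruence subgroup. This reduction is immediate from Dirichlet's unit theorem: since $\O_F^\times$ is finitely generated, any finite-index subgroup $\Gamma \subset \O_F^\times$ contains $(\O_F^\times)^n$ with $n$ the exponent of the finite abelian group $\O_F^\times/\Gamma$, so any $\a$ satisfying $\O_F^\times(\a) \subset (\O_F^\times)^n$ will automatically satisfy $\O_F^\times(\a) \subset \Gamma$.

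The key input for the reduced statement is the injection $\O_F^\times/(\O_F^\times)^n \hookrightarrow F^\times/(F^\times)^n$, which holds because any $v \in F^\times$ with $v^n \in \O_F^\times$ is itself a unit (it is integral over $\O_F$ and divides a unit). Since Dirichlet also gives that $\O_F^\times/(\O_F^\times)^n$ is finite, I would pick coset representatives $u_1 = 1, u_2, \ldots, u_m \in \O_F^\times$ and construct, for each $i \geq 2$, a prime $\p_i$ of $\O_F$ coprime to $n$ such that $u_i$ is not an $n$-th power in the residue field $(\O_F/\p_i)^\times$. Setting $\a := \prod_{i \geq 2} \p_i$, the inclusion $\O_F^\times(\a) \subset (\O_F^\times)^n$ is then straightforward: if $u \equiv 1 \pmod{\a}$ and $u = u_i v^n$ for some $i \geq 2$ and $v \in \O_F^\times$, reducing modulo $\p_i$ yields $u_i \equiv v^{-n} \pmod{\p_i}$, contradicting the defining property of $\p_i$.

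The main obstacle is thus producing the primes $\p_i$. Since $u_i \notin (F^\times)^n$ by the injection above, the extension $F(\sqrt[n]{u_i})/F$ is non-trivial, and I would invoke Chebotarev's density theorem, applied to the Galois closure of $F(\sqrt[n]{u_i})/F$, to produce infinitely many primes of $\O_F$ not splitting completely in $F(\sqrt[n]{u_i})/F$. For any such prime $\p \nmid n$, Hensel's lemma forces $u_i \notin (F_\p^\times)^n$, and hence $u_i$ fails to be an $n$-th power in $(\O_F/\p)^\times$; any such $\p$ serves as $\p_i$. Chebotarev is the only non-elementary ingredient of the argument and could in principle be replaced by more explicit Kummer-theoretic constructions, but the Chebotarev route keeps the proof concise and uniform.
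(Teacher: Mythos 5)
Your reduction to showing that $(\O_F^\times)^n$ contains a principal congruence subgroup is correct, and the overall plan (for each nontrivial coset representative $u_i$ of $\O_F^\times/(\O_F^\times)^n$, produce a prime $\p_i \nmid n$ with $u_i$ not an $n$-th power in $(\O_F/\p_i)^\times$) is close to how the theorem is actually proved. But the last step has a genuine gap. You claim that if $\p \nmid n$ does not split completely in $F(\sqrt[n]{u_i})/F$, then Hensel's lemma forces $u_i \notin (F_\p^\times)^n$. That implication is false; Hensel gives only the converse direction. If $u_i \in (F_\p^\times)^n$, then $X^n - u_i$ has a root in $F_\p$, so \emph{some} prime of $F(\sqrt[n]{u_i})$ above $\p$ has residue degree $1$ --- but not necessarily all of them, so $\p$ need not split completely. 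Concretely, take $F=\Q$, $n=3$, $u=2$, $p=5$: one has $2\equiv 3^3 \pmod 5$, so $2$ is a cube in $(\Z/5\Z)^\times$ and hence in $\Q_5$, yet $X^3-2\equiv(X-3)(X^2+3X+4)\pmod 5$ with the quadratic factor irreducible, so $5$ does not split completely in $\Q(\sqrt[3]{2})$. (Here $2$ is not a unit, but the logical failure is the same for units.) The correct condition you want from Chebotarev is that $\p$ split completely in $F(\mu_n)$ while the Frobenius at $\p$ acts nontrivially on $\sqrt[n]{u_i}$; that is, you must work with the Galois extension $F(\mu_n,\sqrt[n]{u_i})/F$ and prescribe the Frobenius class, not merely exclude the identity class of $F(\sqrt[n]{u_i})/F$.

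Once the criterion is corrected, a second issue surfaces which your sketch does not address: you need $u_i$ not to become an $n$-th power in $F(\mu_n)^\times$ (so that the Kummer extension $F(\mu_n,\sqrt[n]{u_i})/F(\mu_n)$ is nontrivial and Chebotarev has something to produce), and more basically you need to know that a prime $\p_i \nmid n$ with $u_i \notin ((\O_F/\p_i)^\times)^n$ exists at all. This is exactly where the Grunwald--Wang phenomenon lives: there are elements of a number field that are $n$-th powers in $F_\p^\times$ for every $\p\nmid n$ yet are not global $n$-th powers (the archetype being $16$ in $\Q$ with $n=8$), so the injectivity of $\O_F^\times/(\O_F^\times)^n \to \prod_{\p\nmid n}(\O_F/\p)^\times/((\O_F/\p)^\times)^n$ is not automatic and the $2$-power part of $n$ requires separate treatment. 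The paper itself only cites Chevalley's 1951 article for this theorem and gives no proof, so there is nothing internal to compare against; but as written, your argument does not yet establish the statement.
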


\section{Polar regular elements and flats} \label{sec:flats}

We will use polar regular elements, as introduced by Mostow in \cite[p.~12]{StrongRigidity}, to algebraically describe the maximal flat subspaces of a symmetric space.

Let $G$ be a connected linear semisimple Lie group. By \cite[p.~431]{Helgason}, each $g \in G$ has a unique decomposition
\begin{equation} \label{eq:real-jordan-decomposition}
g = g_u g_h g_e
\end{equation}
with $g_u,g_h,g_e \in G$ such that $g_u$,$g_h$, and $g_e$ correspond in one (and thus any) embedding $G \hookrightarrow \GL_n(\R)$ to a unipotent, a hyperbolic, and an elliptic matrix, respectively, and such that they all commute with each other.  We call a matrix in $\GL_n(\R)$ \term{semisimple} if it is diagonalizable over $\C$. A semisimple matrix is called \term{hyperbolic} if all its eigenvalues are real and positive, and it is called \term{elliptic} if all its eigenvalues have absolute norm one. We call \eqref{eq:real-jordan-decomposition} the \term{real Jordan decomposition of $g$}.

\begin{definition} \label{def:polar-regular}
An element $g \in G$ is \term{polar regular} if for all $g' \in G$, we have $$\dim(C_G(g_h)) \leq \dim(C_G(g'_h)),$$
where $C_G(g_h)$ denotes the centralizer of $g_h$ in $G$.
\end{definition}

Let $X_G$ be the symmetric space associated to $G$. A \term{flat} in $X_G$ is a connected totally geodesic submanifold of $X_G$ whose curvature tensor vanishes. A flat is called \term{maximal} if it is of maximal dimension among all flats in $X_G$. For a flat $A \subset X_G$, we denote its stabilizer subgroup in $G$ by
\[
G_A := \{ g \in G : g \cdot A = A\}.
\]
By \cite[Lemma~5.2]{StrongRigidity}, we have the following relationship between polar regular elements in $G$ and maximal flats in $X_G$:

\begin{proposition} \label{thm:polar-regular-mostow}
Let $G$ be a connected linear semisimple Lie group and let $g \in G$ be polar regular. Then there exists a unique maximal flat $A \subset X_G$ in the symmetric space associated to $G$ such that $g \cdot A = A$. Moreover, the centralizer $C_G(g)$ is a subgroup of $G_A$ and acts transitively on $A$.
\end{proposition}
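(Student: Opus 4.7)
The plan is to reduce the statement to a property of the hyperbolic part $g_h$ of the real Jordan decomposition $g = g_u g_h g_e$, and then to apply the structure theory of symmetric spaces. The real Jordan decomposition is intrinsic in the following sense: if $g$ stabilizes a flat $A$, then $g$ lies in the closed Lie subgroup $G_A$, and uniqueness of the decomposition inside $G_A$ (visible from any embedding into $\GL_n(\R)$) forces each of $g_u, g_h, g_e$ to lie in $G_A$ as well. Thus it suffices to classify the maximal flats stabilized by $g_h$ alone and then verify that $g_u$ and $g_e$ automatically preserve whichever flat we find.

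For existence, I would write $g_h = \exp(Y)$ with $Y \in \mathfrak{g}$ hyperbolic; after conjugating I may assume $Y \in \mathfrak{p}$, where $\mathfrak{g} = \mathfrak{k} \oplus \mathfrak{p}$ is the Cartan decomposition at a base point $x_0 \in X_G$. Extending $Y$ to a maximal abelian subspace $\mathfrak{a} \subseteq \mathfrak{p}$ and setting $A_0 := \exp(\mathfrak{a})$, the orbit $A := A_0 \cdot x_0$ is a maximal flat that is stabilized by all of $A_0$ and hence by $g_h$. The polar regularity assumption is precisely the statement that $\dim C_G(g_h)$ equals the minimum possible value $\dim C_G(A_0)$. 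Combined with the automatic inclusion $C_G(A_0) \subseteq C_G(g_h)$, this forces $C_G(g_h)^0 = C_G(A_0)$. Since $g_u$ and $g_e$ commute with $g_h$, they lie in $C_G(g_h)$, and by standard structure theory (they are respectively unipotent and elliptic inside the reductive group $C_G(g_h)$) they land in the identity component, hence in $C_G(A_0) \subseteq G_A$. Therefore $g \in G_A$, and the transitivity of $C_G(g) \supseteq A_0$ on $A$ is built into the construction.

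For uniqueness, let $A'$ be another maximal flat with $g \cdot A' = A'$, hence $g_h \cdot A' = A'$. The identity component of $G_{A'}$ has the form $C_G(A_0')$ for some maximal $\R$-split torus $A_0'$, and since $g_h$ is hyperbolic it is connected to the identity inside $G_{A'}$ via its one-parameter subgroup, so $g_h \in C_G(A_0')$, giving $A_0' \subseteq C_G(g_h) = C_G(A_0)$. Because $A_0$ is the unique maximal $\R$-split torus in $C_G(A_0) = A_0 \cdot M$ (with $M$ compact), we conclude $A_0' = A_0$, so $A$ and $A'$ are both orbits of the same torus. To collapse this to $A = A'$, I would invoke the dynamics of $g_h$: the displacement function $x \mapsto d(x, g_h \cdot x)$ is convex on $X_G$, its minimum set is a closed totally geodesic flat subspace, and in the polar regular case this minimum set is itself a single maximal flat, which must then contain any $g_h$-invariant maximal flat and hence coincide with it.

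The main obstacle is the final step in the uniqueness argument. Polar regularity pins down the centralizer of $g_h$ at the algebraic level very cleanly, but separating the specific flat $A$ from the many possible orbits of the torus $A_0$ requires the dynamical identification of $A$ with the minimizing axis of $g_h$. Carrying this out carefully, along the lines of Mostow's Lemma~5.2 in \cite{StrongRigidity}, is the crux of the proof.
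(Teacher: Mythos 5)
The paper does not prove this proposition: it simply cites Mostow \cite[Lemma~5.2]{StrongRigidity}, so there is no in-paper argument to compare against. Your outline follows the spirit of Mostow's lemma, but the step sending $g_u$ and $g_e$ into $G_A$ has a real gap. You assert that $g_u, g_e \in C_G(g_h)$ ``land in the identity component, hence in $C_G(A_0)$'' by virtue of being unipotent and elliptic. For $g_u$ unipotent this is fine, but elliptic elements of a disconnected group can lie outside the identity component: in $G = \SL_2(\R)$ with $g_h = \mathrm{diag}(\lambda,\lambda^{-1})$, the element $-I$ is elliptic, commutes with $g_h$, and sits in the non-identity component of $C_G(g_h)$. (The displayed identity $C_G(g_h)^0 = C_G(A_0)$ is also wrong as written, since $C_G(A_0) = A_0 M$ with $M = Z_K(\mathfrak{a})$ is generally disconnected; polar regularity only gives $C_G(g_h)^0 = C_G(A_0)^0$.) So some other mechanism is needed to push $g_e$ into $G_A$. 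The natural one is exactly the $\mathrm{Min}$-set description you invoke later: once $\mathrm{Min}(g_h) = A$ is known, any element commuting with $g_h$ preserves $\mathrm{Min}(g_h)$ and hence lies in $G_A$; this also yields the assertion $C_G(g) \subseteq G_A$, which you never explicitly address. In other words, the identification $\mathrm{Min}(g_h) = A$ is needed for the existence step as well as for uniqueness, so essentially the whole proof hinges on the part you flag as the crux.

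On that crux: the assertion that a $g_h$-invariant maximal flat $A'$ must lie inside $\mathrm{Min}(g_h)$ is itself not automatic. A priori the restriction of the displacement function of $g_h$ to $A'$ could have a minimum strictly larger than the global translation length, in which case $A'$ need not meet $\mathrm{Min}(g_h)$ at all. Ruling this out requires either a direct computation --- conjugate $A'$ to the standard flat so that $g_h$ becomes $wa$ with $w \in N_K(\mathfrak{a})$, $a \in A_0$, and argue that hyperbolicity forces the Weyl-group component $w$ to be trivial --- or a careful appeal to the parallel-axes structure of $\mathrm{Min}$-sets in nonpositive curvature. Either way, the reduction to $g_h$ and the use of a Cartan decomposition are sound, but the proposal as written leaves a genuine hole in both the existence step (where it relies on a false statement about identity components) and the uniqueness step.
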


\begin{example} \label{ex:flats-product-hyperbolic-space}
Let $G = \SL_2(\R)^r$. Then $X_G = (\H^2)^r$. The maximal flats in $(\H^2)^r$ are the products of geodesic lines in $\H^2$. An element $g = (g_1,\ldots,g_r) \in \SL_2(\R)^r$ is polar regular if and only if each $g_i$ has two distinct real eigenvalues (see \cite[Lemmas 3.19 and 3.29]{ZschummePhD}).
\end{example}

Let $\Gamma \subset G$ be a lattice. We say that a flat $A \subset X_G$ is \term{$\Gamma$-compact} if the quotient $\Gamma_A \backslash A$ is compact. Then the image of $A$ in $\Gamma \backslash X_G$ is also compact.
By \cite[Lemma~8.3']{StrongRigidity}, the set of $\Gamma$-compact maximal flats is dense in the space of all maximal flats:

\begin{theorem}[Density of $\Gamma$-compact flats] \label{thm:density}
Let $G$ be a connected linear semisimple Lie group. Let $\Gamma \subset G$ be a lattice and let $A \subset X_G$ be a maximal flat.
Then for every open neighborhood of the identity $U \subset G$, there exists some $u \in U$ such that $u \cdot A$ is a $\Gamma$-compact maximal flat in $X_G$ that is stabilized by a polar regular element of $\Gamma$.
\end{theorem}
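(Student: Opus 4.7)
The plan is to reformulate the theorem as a density statement about compact periodic torus orbits on $\Gamma \backslash G$, after which the polar regular stabilizer comes essentially for free. Fix a maximal compact subgroup $K \subset G$ and a maximal $\R$-split torus $T \subset G$ so that the basepoint $x_0 = eK \in X_G = G/K$ lies in $A$ and $A = T \cdot x_0$. The map $T \to \Gamma \backslash G$, $t \mapsto \Gamma u t$, identifies the right orbit $\Gamma u T$ with $T/\Lambda_u$ where $\Lambda_u := u^{-1}\Gamma u \cap T$ is discrete in $T$ (since $\Gamma$ is discrete in $G$), and a short computation shows that $u \cdot A$ is $\Gamma$-compact if and only if this orbit is compact, equivalently $\Lambda_u$ is a cocompact lattice in $T \cong \R^r$.

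Granted this reformulation, the polar regular stabilizer is automatic. A cocompact lattice in $T$ is Zariski dense in $T$, while the polar regular locus $T^{\mathrm{pr}} \subset T$ is Zariski open and dense---its complement is the union of the root kernels $\ker(\alpha)$ for the roots $\alpha$ of $(G,T)$, each a proper subtorus. Hence $\Lambda_u$ contains some polar regular $t = u^{-1}\gamma u$ with $\gamma \in \Gamma$. Polar regularity is conjugation invariant (it depends only on the hyperbolic part of the real Jordan decomposition), so $\gamma$ itself is polar regular; and since $t$ stabilizes $A$, the element $\gamma$ stabilizes $u \cdot A$, which is the unique maximal flat stabilized by $\gamma$ by \cref{thm:polar-regular-mostow}.

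The main obstacle is therefore to prove that for every neighborhood $U$ of the identity there exists $u \in U$ with $\Lambda_u$ cocompact in $T$. I would follow Mostow's strategy in \cite[Lemma 8.3']{StrongRigidity}: apply the Borel density theorem to $\Gamma \subset G$ to get Zariski density of $\Gamma$, combine this with Zariski openness of the polar regular locus in $G$ to locate polar regular elements $\gamma \in \Gamma$, and then use conjugation analysis in a semisimple Lie group---exploiting that $C_G(\gamma)$ is a Cartan subgroup that can be moved close to the reference Cartan containing $T$ by small conjugations---to arrange that $u^{-1}\gamma u \in T$ with $u \in U$. The final and technically most delicate step is to upgrade the existence of a single polar regular element of $u^{-1}\Gamma u \cap T$ to the cocompact-lattice property; this leans on the finite volume of $\Gamma \backslash G$ and the contraction/expansion dynamics of $T$ on the quotient, either via reduction theory or via a Godement/Mahler-type compactness criterion.
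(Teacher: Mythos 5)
The paper does not prove this statement; it cites it directly as Mostow's Lemma~8.3$'$ in \cite{StrongRigidity}, so there is no internal proof to compare against. Your reduction to cocompactness of $\Lambda_u = u^{-1}\Gamma u \cap T$, and your appeal to Borel density together with the Zariski openness of the polar regular locus, are the right ingredients, but the logical dependency is reversed and the real difficulty is mislocated. The ``upgrade to cocompactness'' that you flag as the most delicate step is in fact routine: for a semisimple element $\gamma \in \Gamma$ the $G$-conjugacy class of $\gamma$ is closed, which implies that $C_\Gamma(\gamma)$ is a lattice in $C_G(\gamma)$ (a standard fact in the theory of lattices); for $\gamma$ polar regular, $C_G(\gamma)$ is a Cartan subgroup, hence isomorphic to $\R^r$ times a compact group, and any lattice in such a group is automatically cocompact. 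So once one exhibits a polar regular $\gamma \in u^{-1}\Gamma u \cap T$, cocompactness of $\Lambda_u$---and with it $\Gamma$-compactness of $u \cdot A$---follows immediately, and your second paragraph (cocompact $\Rightarrow$ contains a polar regular element) becomes redundant.

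The genuine gap in your sketch is producing such a $\gamma$ with $u$ inside the \emph{prescribed} neighborhood $U$: Borel density yields polar regular elements of $\Gamma$, but gives no control over where their Cartan subgroups sit relative to $T$. What is actually needed here, and what Mostow's argument supplies, is a recurrence statement for the $T$-action on the finite-volume space $\Gamma \backslash G$, combined with a local analysis of the conjugation map $(g,t) \mapsto g t g^{-1}$ (a submersion near $(e,t)$ for polar regular $t$) that converts an approximate recurrence into an exact conjugation by an element $u \in U$. Your proposal leaves this step, which is the heart of the lemma, entirely implicit under the phrase ``conjugation analysis.''
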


\section{Geometric cycles} \label{sec:geometric-cycles}

Let $G$ be a linear semisimple Lie group and let $X_G$ be the symmetric space associated to $G$.
For a closed subgroup $H \subset G$, we can always find a maximal compact subgroup $K \subset G$ such that $K_H := K \cap H$ is a maximal compact subgroup of $H$. We write $X_H := H/K_H$. Then $X_H$ is diffeomorphic to a Euclidean space and the inclusion $H \hookrightarrow G$ induces a closed embedding
\[
j_H \colon X_H \hookrightarrow X_G
\]
whose image is a totally geodesic submanifold of $X_G$ (see \cite[p.~213]{Schwermer}).

Consider now a torsion-free lattice $\Gamma \subset G$ and the corresponding locally symmetric space $\Gamma \backslash X_G$. The map $j_H$ descends to an immersion into the locally symmetric space $\Gamma \backslash X_G$, but this map will in general not be an embedding. In the arithmetic setting, we can always obtain an embedding by passing to a subgroup of finite index (see \cite[Theorem D]{Schwermer}):

\begin{theorem} \label{thm:geometric-cycles}
Let $\ag{G}$ be a connected semisimple $\Q$-group and let $\ag{H}$ be a connected reductive $\Q$-subgroup of $\ag{G}$. Then any arithmetic subgroup $\Gamma \subset \ag{G}(\Q)$ has a torsion-free subgroup of finite index $\Gamma_0 \subset \Gamma$ such that for any subgroup of finite index $\Gamma' \subset \Gamma_0$, the map
\[
j_{H \mid \Gamma'} \colon (\Gamma' \cap H) \backslash X_H \to \Gamma' \backslash X_G
\]
induced by the above map $j_H \colon X_H \hookrightarrow X_G$ for $G = \ag{G}(\R)$ and $H = \ag{H}(\R)$ is a closed embedding and its image is an orientable totally geodesic submanifold.
\end{theorem}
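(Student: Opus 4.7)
The plan is to build $\Gamma_0$ by successively shrinking a torsion-free finite-index subgroup of $\Gamma$, handling torsion-freeness, injectivity, closedness, and orientability in turn. First, I would invoke Selberg's lemma to pass to a torsion-free subgroup $\Gamma_1 \subset \Gamma$ of finite index; every subgroup $\Gamma' \subset \Gamma_1$ is then torsion-free as well, so $\Gamma' \backslash X_G$ and $(\Gamma' \cap H) \backslash X_H$ are smooth manifolds. Total geodesicity of the image then descends from the embedding $j_H \colon X_H \hookrightarrow X_G$, so the substantive steps concern injectivity, properness, and orientability.

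Since $j_H(X_H) = H \cdot o$ with $o = eK$, injectivity of $j_{H \mid \Gamma'}$ fails precisely when some $\gamma \in \Gamma' \setminus H$ satisfies $\gamma (H \cdot o) \cap (H \cdot o) \neq \emptyset$, equivalently $\gamma \in HKH$. The goal is thus to arrange $\Gamma' \cap HKH = \Gamma' \cap H$. The crucial input is a finiteness statement: the image of $\Gamma_1 \cap HKH$ in the double-coset space $H \backslash G / H$, restricted to cosets meeting $K$, has only finitely many $(\Gamma_1 \cap H)$-orbits. This relies on the reductivity of $\ag{H}$ inside $\ag{G}$, which makes $H \cdot o$ a closed submanifold of $G/K$ and controls the $H$-double cosets in $HKH$ meeting the discrete set $\Gamma_1$. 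Picking representatives $\gamma_1, \ldots, \gamma_m \in \Gamma_1 \setminus H$ for the nontrivial classes, I would then apply \cref{thm:adelic-points-basis} to an integral form of $\ag{G}$: for each $i$ there is a nonzero ideal $\a_i \subset \O_F$ such that $\gamma_i$ lies outside the principal congruence subgroup $\Gamma_1(\a_i)$. Setting $\Gamma_0 := \Gamma_1(\a_1 \cdots \a_m)$ simultaneously excludes all bad representatives, and by monotonicity this survives passage to any further finite-index $\Gamma' \subset \Gamma_0$, yielding injectivity.

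Closedness of the resulting embedding follows from the closedness of the orbit $H \cdot o$ in $G/K$ combined with the discreteness of $\Gamma'$: a convergent sequence in the image lifts, after passing to a subsequence, to a sequence in a single $\Gamma'$-translate of $H \cdot o$, whose limit still lies there. For orientability, $X_H$ is diffeomorphic to a Euclidean space and hence orientable. The action of $\Gamma' \cap H$ defines a sign character $\varepsilon \colon \Gamma' \cap H \to \{\pm 1\}$, and replacing $\Gamma_0$ by a subgroup of index at most $2$ whose intersection with $H$ lies in $\ker(\varepsilon)$ makes $\varepsilon$ trivial; the quotient $(\Gamma' \cap H) \backslash X_H$ then inherits an orientation from $X_H$ and embeds as an orientable totally geodesic submanifold of $\Gamma' \backslash X_G$.

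The main obstacle I anticipate is establishing the finiteness of the relevant bad double cosets in the second step, which is where the reductivity hypothesis and the arithmeticity of $\Gamma$ interact most delicately; once finiteness is in hand, the congruence-subgroup technology of \cref{sec:adeles-and-congruence-subgroups} routinely excludes finitely many unwanted elements, and the orientation issue is disposed of by a final passage to an index-$2$ subgroup.
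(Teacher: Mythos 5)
The paper does not prove this theorem; it is quoted from Schwermer's survey \cite[Theorem D]{Schwermer}, which rests on work of Rohlfs and Rohlfs--Schwermer. Your route (Selberg's lemma, finiteness of bad double cosets, then exclusion by congruence subgroups, then an orientation fix) is the standard one, but the congruence-exclusion step, which is the part that should be routine once finiteness is in hand, is carried out with a condition that is too weak to close the argument. You arrange only that each representative $\gamma_i$ lies outside $\Gamma_1(\a_i)$ and then set $\Gamma_0 := \Gamma_1(\a_1\cdots\a_m)$. But suppose $\gamma \in \Gamma_0 \cap HKH$ with $\gamma = h_1\gamma_i h_2$, $h_1,h_2 \in \Gamma_1 \cap H$. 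Using normality of $\Gamma_0$ in $\Gamma_1$, this only yields $\gamma_i h_2 h_1 \in \Gamma_0$, i.e.\ $\gamma_i \in \Gamma_0(\Gamma_1\cap H)$; the hypothesis $\gamma_i \notin \Gamma_0$ does nothing to rule this out. What one actually needs is $\gamma_i(\Gamma_1\cap H)\cap\Gamma_0=\emptyset$ for every $i$, i.e.\ separation of the identity from the whole coset, not from the single point $\gamma_i$. The correct argument, in the same pattern the paper itself uses in \cref{thm:centralizer}, is to observe that $\gamma_i\ag{H}(\AA{\Q})$ is a closed subset of $\ag{G}(\AA{\Q})$ not containing the identity (since $\ag{H}$ is a $\Q$-subgroup and $\gamma_i\in\ag{G}(\Q)\setminus H$ forces $\gamma_i\notin\ag{H}(\AA{\Q})$), and then to invoke \cref{thm:adelic-points-basis} to find $\a_i$ with $\ag{G}_0(\OO{\Q})(\a_i)\cap\gamma_i\ag{H}(\AA{\Q})=\emptyset$, whence $\Gamma_1(\a_i)\cap\gamma_i(\Gamma_1\cap H)=\emptyset$.

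Two further points. The finiteness of the bad double cosets is the technical crux of the whole theorem, and you only assert it; it is precisely here that the reductivity of $\ag{H}$ (closedness of the $H$-orbit in $X_G$, properness of the relevant actions) interacts with the discreteness of $\Gamma_1$, and no sketch of this interaction is given. And the orientability fix by ``a subgroup of index at most $2$'' does not make sense as stated: the sign character $\varepsilon$ is defined on $\Gamma_0\cap H$, not on $\Gamma_0$, so there is no canonical index-$2$ subgroup of $\Gamma_0$ whose intersection with $H$ lies in $\ker(\varepsilon)$, and even if one were found it would not control $\Gamma'\cap H$ for an arbitrary finite-index $\Gamma'\subset\Gamma_0$. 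One again needs a congruence separation argument forcing $\Gamma'\cap H$ into $\ag{H}(\R)^0$, using that $\ag{H}(\R)^0$ has finite index in $\ag{H}(\R)$ and that the finitely many offending cosets can be excluded adelically.
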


In the situation of the above theorem, we call the image of $j_{H|\Gamma'}$ in $\Gamma' \backslash X_G$ a \term{geometric cycle}. An effective strategy to show that the fundamental class of a geometric cycle
is nontrivial in the homology of $\Gamma' \backslash X_G$ is to find another
geometric cycle in $\Gamma' \backslash X_G$ such that their intersection product is nontrivial.
We will do this in the next section.

\section{Construction of flat submanifolds} \label{sec:geometric-construction}

In this section, we explain our construction of the compact oriented flat totally geodesic submanifolds with linearly independent homology classes in an arithmetic Riemannian manifold covered by $(\H^2)^r$. This will finish our proof of \cref{thm:main}.
We will proceed as follows:
\begin{itemize}
\item In \cref{subsec:configuration}, we construct for an arithmetic quotient $M$ of $(\H^2)^r$ and any given number $n \in \N$ two families $(A_i)_{1 \leq i \leq n}$ and $(B_j)_{1 \leq j \leq n}$ of maximal flats in $(\H^2)^r$ so that $A_i$ and $B_j$ intersect if and only if $i \leq j$ and so that the images of the $A_i$ in $M$ are compact.
\item Next, in \cref{subsec:controlling}, we study the intersections of the images of the flats $A_i$ and $B_j$ in finite covering spaces $M'$ of $M$. Let us denote these images by $\mathcal{F}_i$ and $\mathcal{G}_j$, respectively. We will show that by suitably choosing $M'$, one can achieve that $\mathcal{F}_i$ and $\mathcal{G}_j$ are embedded submanifolds of $M'$ and that one can control their intersection numbers.
\item Finally, in \cref{subsec:finishing}, we will use our knowledge about the intersection numbers of the submanifolds $\mathcal{F}_i$ and $\mathcal{G}_j$ of $M'$ combined with methods from de Rham cohomology to show that the fundamental classes $[\mathcal{F}_1], \ldots, [\mathcal{F}_n]$ are linearly independent in $H_r(M';\Z)$.
\end{itemize}
As we have seen in \cref{sec:algebraic-groups,sec:quaternion-algebras}, it suffices to consider for our task a Riemannian manifold $M = \Delta \backslash (\H^2)^r$, where $\Delta \subset \SL_2(\R)^r$ is a subgroup derived from a quaternion algebra.

Throughout this section, we therefore fix the following notation and assumptions:
Let $F$ be a totally real number field. We denote by $\{\sigma_1,\ldots,\sigma_d\}$ the set of all distinct embeddings $F \hookrightarrow \R$. Further, $D = (a,b)_F$ is a quaternion algebra over $F$ that is split at the first $r$ embeddings and ramified at the remaining ones.
We assume that $\Lambda \subset D$ is an order and $\Gamma \subset \Lambda^1$ is a torsion-free subgroup of finite index.
For each $i \in \{1,\ldots,r\}$, we fix an isomorphism $\tau_i \colon D \otimes_F \res_{\sigma_i}(\R) \congrightarrow M_2(\R)$. We assume that $\Delta$ is given by
\[
\Delta = \big\{ (\tau_1(x), \ldots, \tau_r(x)) : x \in \Gamma \big\} \subset \SL_2(\R)^r.
\]
We assume that $F \subset \R$ and $\sigma_1$ is the identity embedding.
Further, we assume that $a,b \in \O_F$ and $\sigma_1(a), \ldots, \sigma_r(a) > 0$. This is justified by \cref{thm:quaternion-algebra-trick}.
We also assume that the order $\Lambda$ is the $\O_F$-span of a quaternionic basis $\{1,i,j,k\}$ for $D$ and that with respect to this basis, $\tau_1$ is given by
\[
\tau_1(x+yi+zj+wk)
= \begin{pmatrix}x + y\sqrt{a} & z + w\sqrt{a} \\ b(z - w\sqrt{a}) & x-y\sqrt{a}\end{pmatrix}.
\]
This is justified by \cref{rem:derived-commensurable}. In particular, we have $\tau_1(D) \subset M_2(F(\sqrt{a}))$.
In this setting, we have an action of $D^1$ on $(\H^2)^r$ by
\[
g \cdot (z_1,\ldots,z_r) := (\tau_1(g)\cdot z_1, \ldots, \tau_r(g) \cdot z_r),
\]
where $\tau_i(g) \cdot z_i$ is the action of $\SL_2(\R)$ on $\H^2$ by fractional linear transformations.
Note that we have $M = \Gamma \backslash (\H^2)^r$.
We extend the above action of $D^1$ to an action of the group $D^\times$ on $(\H^2)^r$ through the maps $\tau_i$ by defining the action of $\GL_2(\R)$ on $\H^2$ as follows:
\[
\begin{pmatrix}
a_{11} & a_{12} \\
a_{21} & a_{22}
\end{pmatrix}
\cdot
z :=
\begin{cases}
(a_{11}z+a_{12})(a_{21}z+a_{22})^{-1}, & \text{if } a_{11}a_{22}-a_{12}a_{21} > 0, \\[8pt]
\overline{(a_{11}z+a_{12})(a_{21}z+a_{22})^{-1}}, & \text{otherwise. }
\end{cases}
\]
We write $(D^\times)_A := \{ x \in D^\times : x \cdot A = A \}$ for the stabilizer group of a flat $A \subset (\H^2)^r$.
One can check that for our action of $D^\times$ on $(\H^2)^r$, the following analog of \cref{thm:polar-regular-mostow} holds (see  \cite[Proposition 3.31]{ZschummePhD} for a proof):

\begin{proposition} \label{thm:polar-regular-quaternion-algebra}
Let $x \in D^\times$ be such that $\tau_1(x),\ldots,\tau_r(x) \in \GL_2(\R)$ each have two distinct real eigenvalues.
Then there exists a unique maximal flat $A \subset (\H^2)^r$ with $x \cdot A = A$.
Moreover, the centralizer $C_{D^\times}(x)$ is a subgroup of finite index in $(D^\times)_A$ and acts by orientation-preserving isometries on $A$.
\end{proposition}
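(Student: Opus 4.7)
The plan is to imitate Mostow's argument for \cref{thm:polar-regular-mostow}, but working directly with the $D^\times$-action on $(\H^2)^r$ and using the description of maximal flats from \cref{ex:flats-product-hyperbolic-space}. The key geometric input is that any element of $\GL_2(\R)$ with two distinct real eigenvalues has two distinct fixed points on $\partial \H^2 = \R \cup \{\infty\}$ (the projectivisations of its eigenlines) and therefore preserves a unique geodesic line in $\H^2$, namely the one joining them. This holds for both signs of the determinant, since the anti-holomorphic boundary action agrees with the M\"obius action on real points. Applying this factor-by-factor to $\tau_1(x),\ldots,\tau_r(x)$ produces geodesics $L_1,\ldots,L_r \subset \H^2$ whose product $A := L_1 \times \ldots \times L_r$ is a maximal flat with $x \cdot A = A$; by \cref{ex:flats-product-hyperbolic-space}, any other maximal flat preserved by $x$ must factor as $L_1' \times \ldots \times L_r'$ with each $L_i'$ an invariant geodesic for $\tau_i(x)$, forcing $L_i' = L_i$.

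If $y \in C_{D^\times}(x)$, then $x \cdot (y \cdot A) = y \cdot (x \cdot A) = y \cdot A$, so uniqueness gives $y \in (D^\times)_A$. To control the index, I will consider the homomorphism
\[
\phi \colon (D^\times)_A \to (\Z/2\Z)^r
\]
whose $i$-th coordinate records whether $y$ swaps the two endpoints of $L_i$ on $\partial \H^2$ (well-defined because $y \cdot A = A$ implies $\tau_i(y) \cdot L_i = L_i$). The image is finite, and the plan is to show $\ker\phi = C_{D^\times}(x)$. The inclusion $C_{D^\times}(x) \subset \ker\phi$ is immediate, since commuting with $x$ in $D$ forces $\tau_i(y)$ to commute with $\tau_i(x)$, hence to be diagonal in the eigenbasis of $\tau_i(x)$. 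For the reverse inclusion, if $y \in \ker\phi$ then each $\tau_i(y)$ is diagonal in that eigenbasis and thus commutes with $\tau_i(x)$; to pass from this to $xy = yx$ in $D$ itself, I will invoke injectivity of the composition $D \hookrightarrow D \otimes_F \res_{\sigma_1}(\R) \xrightarrow{\tau_1} M_2(\R)$, which holds because $\R$ is flat over $F$ and $\sigma_1$ is a field embedding, so that $[\tau_1(y),\tau_1(x)] = 0$ already forces the commutator $yx-xy$ to vanish in $D$.

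For the orientation assertion, I observe that any isometry of $\H^2$ fixing both endpoints of a geodesic $L$ restricts to $L \cong \R$ as an orientation-preserving isometry: the orientation-preserving such isometries of $\H^2$ are translations along $L$, while the orientation-reversing ones are reflections across $L$, which act trivially on $L$. Applied factor-by-factor to $y \in C_{D^\times}(x) \subset \ker\phi$, this shows that $y$ acts on $A$ by a Euclidean translation of the $r$-flat, which preserves the product orientation. The step I expect to be most delicate is the reverse inclusion $\ker\phi \subset C_{D^\times}(x)$: commutativity of the images under $\tau_i$ does not a priori transfer back to commutativity in $D$, and the crucial observation is that injectivity at a single split real place already suffices.
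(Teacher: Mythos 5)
Your proof is correct. The paper itself does not spell out an argument for this proposition — it cites \cite[Proposition 3.31]{ZschummePhD} — so there is no in-text proof to compare against, but your argument is clean and complete: constructing $A$ as the product of the unique $\tau_i(x)$-invariant geodesics handles existence and uniqueness, and the identification $\ker\phi = C_{D^\times}(x)$ for the swap homomorphism $\phi \colon (D^\times)_A \to (\Z/2\Z)^r$ gives both the finite index and the orientation claim at once. Your flagged delicate step — transferring $[\tau_1(y),\tau_1(x)]=0$ back to $[y,x]=0$ in $D$ — is handled correctly; note that one can also bypass flatness entirely by observing that $D$ is a simple ring, so every nonzero ring homomorphism out of $D$ (such as $\tau_1$ restricted to $D$) is automatically injective. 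One small imprecision in the orientation step: the orientation-reversing isometries of $\H^2$ fixing both endpoints of a geodesic $L$ are not only the reflection across $L$ but also glide reflections (reflection composed with a nontrivial translation along $L$); these act on $L$ by a nontrivial translation rather than trivially, but they are still orientation-preserving on $L$, so your stated conclusion stands. A slicker way to see this is that the two endpoints of $L$ are the two ends of $L\cong\R$, and an orientation-reversing isometry of $\R$ must swap its ends — precisely what is excluded for $y\in\ker\phi$.
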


\subsection{Building a configuration of flats} \label{subsec:configuration}

\noindent We now start with the construction of the families of flats $(A_i)_{1 \leq i \leq n}$ and $(B_j)_{1 \leq j \leq n}$ in the symmetric space $(\H^2)^r$. The following lemma shows that two generic geodesic lines in $\H^2$ can be slightly perturbed without changing the way they intersect. Here, we denote by $\partial_\infty \H^2$ the boundary at infinity of the hyperbolic plane (see \cite[p.~27]{Eberlein}).

\begin{lemma}[Perturbation Lemma] \label{thm:perturbation-lemma}
Let $L_1$ and $L_2$ be two geodesic lines in $\H^2$ whose four endpoints in $\partial_\infty \H^2$ are pairwise distinct.
Then $L_1$ and $L_2$ are either disjoint or intersect transversally in a single point. Moreover, there exists an open neighborhood of the identity $U \subset \SL_2(\R)$ such that for all $u,v \in U$, the geodesic lines $u \cdot L_1$ and $v \cdot L_2$ also have pairwise distinct endpoints in $\partial_\infty \H^2$ and intersect in the same way as $L_1$ and $L_2$.
\end{lemma}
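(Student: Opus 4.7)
The plan is to reduce both assertions to statements about the four endpoints in $\partial_\infty \H^2$ viewed as a topological circle, using the fact that the $\SL_2(\R)$-action on $\partial_\infty \H^2$ is continuous and compatible with the action on geodesic lines (each line $L$ is determined by its pair of endpoints, and $g \cdot L$ has endpoints given by $g$ applied to those of $L$).

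First I would dispose of the opening sentence. Because $\H^2$ has strictly negative curvature, any two points of $\overline{\H^2} = \H^2 \cup \partial_\infty \H^2$ are joined by a unique geodesic, so two distinct geodesic lines cannot meet in two points (they would otherwise coincide). Similarly two distinct geodesics cannot be tangent, since a geodesic line is determined by a single point together with a tangent direction. Hence if the endpoints of $L_1$ and $L_2$ are pairwise distinct, $L_1$ and $L_2$ are distinct, and they either do not meet at all or meet transversally in exactly one point.

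For the perturbation statement I would use the standard fact that two geodesic lines in $\H^2$ with pairwise distinct endpoints $\{p_1,q_1\}$ and $\{p_2,q_2\}$ on $\partial_\infty \H^2 \cong S^1$ intersect (transversally in a single point) if and only if the two endpoint pairs \emph{link} on the circle, i.e.\ $p_2$ and $q_2$ lie in different components of $\partial_\infty \H^2 \setminus \{p_1,q_1\}$; otherwise they are disjoint. Both ``linking'' and ``non-linking'' are open conditions on a $4$-tuple of pairwise distinct points in $S^1$, and ``pairwise distinct'' is itself an open condition. Thus the set of admissible endpoint configurations of the required intersection type is an open subset of $(\partial_\infty \H^2)^4$ containing the endpoints of $(L_1,L_2)$.

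Finally I would pull this back to $\SL_2(\R)$. The assignment $(g,h,p,q) \mapsto (g \cdot p, h \cdot q)$ is continuous, so the map $\SL_2(\R)^2 \to (\partial_\infty \H^2)^4$ sending $(u,v)$ to the four endpoints of $u \cdot L_1$ and $v \cdot L_2$ is continuous and sends $(e,e)$ into the open set described above. Its preimage contains an open neighborhood of $(e,e)$, and after shrinking I can take it of the form $U \times U$ for an open identity neighborhood $U \subset \SL_2(\R)$. For such $U$ the conclusion follows. The only mildly delicate step is the linking criterion for intersection of geodesics, but this is a classical feature of the upper half-plane model and requires no calculation beyond observing that a geodesic disconnects $\H^2$ with its two endpoints on $\partial_\infty \H^2$ separating the boundary circle.
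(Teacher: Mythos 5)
Your proposal is correct and follows essentially the same approach as the paper: both reduce the intersection pattern to the linking of the four endpoints on $\partial_\infty\H^2$, observe that linking/non-linking is preserved under small perturbations (you phrase this as openness of the configuration set; the paper phrases it via pairwise disjoint neighborhoods $V_i$ of the four endpoints), and then pull back along the continuous $\SL_2(\R)$-action on $\overline{\H^2}$ to obtain the open neighborhood $U$. Your opening paragraph justifying the ``disjoint or transverse in a single point'' dichotomy via uniqueness of geodesics is a small elaboration that the paper leaves implicit.
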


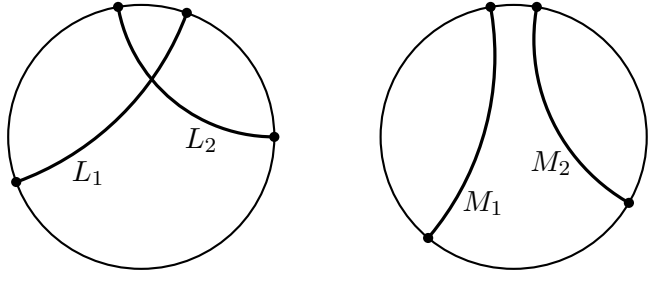
\begin{figure}
\centering
\begin{minipage}[b]{.375\textwidth}
\centering
\begin{tikzpicture}[scale=1.75]
\begin{scope}
\draw (0,0) circle (1);
\clip (0,0) circle (1cm);
\hgline[very thick]{200}{70};
\hgline[very thick]{100}{0};
\end{scope}
\node[vertex] at (200:1) {};
\node[vertex] at (70:1) {};
\node[vertex] at (100:1) {};
\node[vertex] at (0:1cm) {};
\node at (-0.4,-0.275) {$L_1$};
\node at (0.45,-0.025) {$L_2$};
\end{tikzpicture}
\end{minipage}
\begin{minipage}[b]{.375\textwidth}
\centering
\begin{tikzpicture}[scale=1.75]
\begin{scope}
\draw (0,0) circle (1);
\clip (0,0) circle (1cm);
\hgline[very thick]{230}{100};
\hgline[very thick]{80}{-30};
\end{scope}
\node[vertex] at (230:1cm) {};
\node[vertex] at (100:1cm) {};
\node[vertex] at (80:1cm) {};
\node[vertex] at (-30:1cm) {};

\node at (0.28,-0.2) {$M_2$};
\node at (-0.23,-0.5) {$M_1$};
\end{tikzpicture}
\end{minipage}
\caption{Two pairs of geodesic lines in $\H^2$ and their endpoints in the boundary at infinity $\partial_\infty \H^2$.}	
\label{fig:geodesics-endpoints}
\end{figure}

\begin{proof}
Note that $L_1$ and $L_2$ intersect if and only if their endpoints in $\partial_\infty \H^2$ are linked, in which case their intersection consists of a single point and is transverse (see \cref{fig:geodesics-endpoints}). We denote the four endpoints of $L_1$ and $L_2$ by $v_1,v_2,v_3,v_4 \in \partial_\infty\H^2$.
The geodesic compactification $\overline{\H^2} := \H^2 \sqcup \partial_\infty\H^2$ as defined in \cite[pp.~28--30]{Eberlein} is Hausdorff, and so we can find pairwise disjoint open neighborhoods $V_i \subset \overline{\H^2}$ of the points $v_i$.
The group $\SL_2(\R)$ acts continuously on $\overline{\H^2}$, hence the maps $\phi_i \colon \SL_2(\R) \to \overline{\H^2}$ given by $\phi_i(g) := g \cdot v_i$ are continuous. So $U := \bigcap_{i=1}^4 \phi_i^{-1}(V_i)$ is an open neighborhood of the identity in $\SL_2(\R)$. By the construction of $U$, the endpoints of two geodesic lines $u \cdot L_1$ and $v \cdot L_2$ with $u,v \in U$ in $\partial_\infty\H^2$ are linked if and only if whose of $L_1$ and $L_2$ are linked. So the statement of the lemma follows.
\end{proof}

We can now describe the construction of the families of flats $(A_i)_{1 \leq i \leq n}$ and $(B_j)_{1 \leq j \leq n}$ adapted to the group $\Gamma$:

\begin{proposition} \label{thm:configuration}
For any $n \in \N$, there exist maximal flats $A_1,\ldots,A_n$ and $B_1,\ldots,B_n$ in $(\H^2)^r$ so that the following conditions are satisfied:
\begin{enumerate}
\item The flats $A_i$ and $B_j$ intersect transversally in a single point if $i \leq j$, and they are disjoint if $i > j$.
\item Each $A_i$ is $\Gamma$-compact.
\item Each $A_i$ is stabilized by an element $\alpha_i \in D^1$ such that $\tau_1(\alpha_i),\ldots,\tau_r(\alpha_i)$ each have two distinct real eigenvalues.
\item Each $B_j$ is stabilized by an element $\beta_j \in D^\times$ such that $\tau_1(\beta_j),\ldots,\tau_r(\beta_j)$ each have two distinct real eigenvalues and $\tau_1(\beta_j)$ is diagonalizable over $F(\sqrt{a})$.
\end{enumerate}
\end{proposition}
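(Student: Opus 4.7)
The plan is to first build an ideal configuration of maximal flats in $(\H^2)^r$ realizing the intersection pattern required by condition~(1) using only elementary hyperbolic geometry, and then to perturb this configuration into the required arithmetic setting by means of two density results. The key algebraic observation is that the element $i$ of the fixed quaternionic basis of $D$ tautologically satisfies condition~(4): since $i^2 = a$ we have $\tau_l(i)^2 = \sigma_l(a) I$, so the eigenvalues of $\tau_l(i)$ are $\pm\sqrt{\sigma_l(a)}$, distinct and real for $l = 1, \ldots, r$ by the assumption $\sigma_l(a) > 0$; and the explicit formula for $\tau_1$ gives $\tau_1(i) = \operatorname{diag}(\sqrt{a}, -\sqrt{a})$, diagonal over $F(\sqrt{a})$. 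By \cref{thm:polar-regular-quaternion-algebra}, $i$ stabilizes a unique maximal flat $B_0 \subset (\H^2)^r$, and for any $g \in D^\times$ the conjugate $gig^{-1}$ again satisfies~(4) and stabilizes the maximal flat $\tau(g) \cdot B_0$, where $\tau := (\tau_1, \ldots, \tau_r)$. Furthermore, weak approximation for $F$ at its $r$ split real places gives that $F$ has dense image in $\R^r$, whence $D$ has dense image in $M_2(\R)^r$ via $\tau$ and so $\tau(D^\times)$ is dense in $\GL_2(\R)^r$.

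Next, I would build the ideal configuration in a single copy of $\H^2$: parametrizing $\partial_\infty \H^2$ as $\R / 2\pi\Z$, take $m_j$ to be the geodesic line with endpoints $\pm\pi/(j+1)$ and let $\ell_i$ be the geodesic line with endpoints $0$ and some $\theta_i \in (\pi/(i+1), \pi/i)$, choosing the $\theta_i$ generically so that all $4n$ endpoints are pairwise distinct. A direct check shows that the endpoints of $\ell_i$ and $m_j$ are linked on $\partial_\infty \H^2$ iff $i \leq j$, so $\ell_i \cap m_j$ is a single transverse point for $i \leq j$ and empty for $i > j$. The product flats $\mathcal{A}_i := \ell_i \times \cdots \times \ell_i$ and $\mathcal{B}_j := m_j \times \cdots \times m_j$ in $(\H^2)^r$ then inherit the triangular intersection pattern of condition~(1), since two product flats meet transversally in a single point iff this holds in each factor, and are disjoint iff they are disjoint in at least one factor.

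A factorwise application of \cref{thm:perturbation-lemma} shows that the intersection pattern of a tuple of maximal flats is stable under sufficiently small perturbations of each flat, so the set of $2n$-tuples realizing the triangular pattern is open. Starting from $(\mathcal{A}_i, \mathcal{B}_j)$, I would perturb into the required form as follows: for each $i$, apply \cref{thm:density} to replace $\mathcal{A}_i$ by a nearby $\Gamma$-compact maximal flat $A_i$ stabilized by a polar regular element $\alpha_i \in \Gamma \subset \Lambda^1 \subset D^1$, which yields condition~(3) via \cref{ex:flats-product-hyperbolic-space}; for each $j$, use the transitivity of $\GL_2(\R)^r$ on maximal flats to write $\mathcal{B}_j = h_j \cdot B_0$ for some $h_j \in \GL_2(\R)^r$, then use the density of $\tau(D^\times)$ in $\GL_2(\R)^r$ to pick $g_j \in D^\times$ with $\tau(g_j)$ close to $h_j$, and set $B_j := \tau(g_j) \cdot B_0$ with stabilizer $\beta_j := g_j i g_j^{-1}$. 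Making all $2n$ perturbations small enough keeps the tuple in the open triangular-pattern locus, proving condition~(1); conditions~(2), (3), (4) hold by construction.

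The crux of the argument is the explicit ideal 1-dimensional configuration together with the openness of the intersection condition: once these are in place, the rest is a fairly mechanical application of the Perturbation Lemma and the two density results, and I do not foresee a conceptual obstacle beyond the technical bookkeeping needed to ensure that all $2n$ perturbations are compatibly small.
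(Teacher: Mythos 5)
Your argument follows essentially the same strategy as the paper's: build an explicit pattern of geodesics in $\H^2$ realizing the triangular intersection pattern, take product flats, and then use the Perturbation Lemma together with density statements to move the flats into the required arithmetic configuration. Two minor differences are worth noting. You conjugate the basis element $i$ (which is precisely an element $x_0 \in D^\times$ with $x_0^2 = a$ and $x_0 \notin F$, the choice the paper makes) and derive density of $\tau(D^\times)$ in $\GL_2(\R)^r$ from weak approximation for the field $F$, whereas the paper cites the real approximation theorem for the algebraic group $\ag{SL}_D$ to get density of $D^1$ in $\SL_2(\R)^r$; your route is arguably more elementary, though you should note that the resulting perturbation $\tau(g_j)h_j^{-1}$ lives in $\GL_2(\R)^r$, so one needs either a $\GL_2(\R)$ version of the Perturbation Lemma or to rescale by the determinant.

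There is also a small internal inconsistency in your explicit pattern: all the geodesics $\ell_i$ share the endpoint $0$, so the stated requirement that ``all $4n$ endpoints are pairwise distinct'' cannot be met. This is harmless, since the Perturbation Lemma is applied one pair $(\ell_i, m_j)$ at a time and each such pair does have four distinct endpoints (one checks that no $\pi/(j+1)$ with $j$ an integer lies in the open interval $(\pi/(i+1),\pi/i)$), but the sentence should be corrected, for instance by moving the second endpoint of $\ell_i$ to some small $-\epsilon_i < 0$ with the $\epsilon_i$ pairwise distinct.
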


\begin{proof}
We start with the first condition. For this, we choose geodesic lines $L_1,\ldots,L_n$ and $M_1,\ldots,M_n$ in $\H^2$ with pairwise distinct endpoints in $\partial_\infty\H^2$ such that for each $i,j \in \{1,\ldots,n\}$, we have that $L_i$ and $M_j$ intersect if and only if $i \leq j$ (see \cref{fig:geodesics-pattern} for an example in the case $n=3$). Set $A_i := L_i \times \cdots \times L_i$ and $B_j := M_j \times \cdots \times M_j$. The first condition is now satisfied.

By \cref{thm:perturbation-lemma}, there exists an open neighborhood of the identity $U \subset \SL_2(\R)$ so that we may perturb each $L_i$ and $M_j$ by isometries in $U$ without invalidating the first condition. The product $U^r$ is an open neighborhood of the identity in $\SL_2(\R)^r$. So for each $i \in \{1,\ldots,n\}$, there exists by \cref{thm:density} an element $u_i \in U^r$ such that $u_i \cdot A_i$ is a $\Gamma$-compact flat that is stabilized by an element $\alpha_i \in D^1$ for which $\tau_1(\alpha_i),\ldots,\tau_r(\alpha_i)$ each have two distinct real eigenvalues. We now replace each $A_i$ by $u_i \cdot A_i$ and the first three conditions are satisfied.

\begin{figure}[t]
\centering
\begin{tikzpicture}[scale=2.3]
\draw (0,0) circle (1);
\clip (0,0) circle (1);
\hgline[very thick]{170}{110} node[blue,xshift=-8,yshift=-10] {};
\hgline[very thick]{190}{90} node[blue,xshift=-10,yshift=-13] {};
\hgline[very thick]{215}{70} node[blue,xshift=-12,yshift=-18] {};
\hgline[very thick]{120}{-25} node[red,xshift=5,yshift=-5] {};
\hgline[very thick]{100}{0}  node[red,xshift=5,yshift=-5] {};
\hgline[very thick]{80}{20}  node[red,xshift=5,yshift=-5] {};
\node at (-0.737,0.065) {$L_3$};
\node at (-0.6605,-0.2137) {$L_2$};
\node at (-0.5566,-0.5306) {$L_1$};
\node at (0.7709,0.1904) {$M_1$};
\node at (0.7273,-0.0991) {$M_2$};
\node at (0.6017,-0.405) {$M_3$};
\end{tikzpicture}
\caption{A pattern of geodesics lines in $\H^2$.}
\label{fig:geodesics-pattern}
\end{figure}
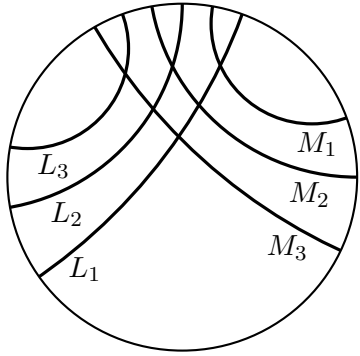

For the last condition, we choose an element $x_0 \in D^\times$ with $(x_0)^2 = a$ and $x_0 \notin F$.
Then for each $k \in \{1,\ldots,r\}$, we have $(\tau_k(x_0))^2 = \sigma_k(a)I_2$ and $\tau_k(x_0) \notin \R \cdot I_2$, where $I_2 \in M_2(\R)$ is the identity matrix. Recall that $\sigma_k(a) > 0$ by assumption. So the minimal polynomial of $\tau_k(x_0)$ over $\R$ is $\bigl(X + \sqrt{\sigma_k\smash[b]{(a)}}\bigr)\bigl(X - \sqrt{\sigma_k\smash[b]{(a)}}\bigr) \in \R[X]$.
Hence, each $\tau_k(x_0)$ has two distinct real eigenvalues and $\tau_1(x_0)$ is diagonalizable over $F(\sqrt{a})$.
So by \cref{thm:polar-regular-quaternion-algebra}, there exists a unique maximal flat $B_0 \subset (\H^2)^r$ that is stabilized by $x_0$.
The group $\SL_2(\R)^r$ acts transitively on the set of all maximal flats in $(\H^2)^r$, and so for each $j \in \{1,\ldots,n\}$, we can find some $T_j \in \SL_2(\R)^r$ with $B_j = T_j \cdot B_0$.

By the real approximation theorem \cite[Theorem~25.70]{Milne-Book}, the image of $D^1$ in $\SL_2(\R)^r$ under the maps $\tau_1,\ldots,\tau_r$ is dense. Since the subsets $UT_j \subset \SL_2(\R)^r$ are open, there exist $x_j \in D^1$ and $v_j \in U$ with
$(\tau_1(x_j),\ldots,\tau_r(x_j)) = v_j T_j$.
Next, from $v_j \cdot B_j = v_jT_j \cdot (T_j^{-1} \cdot B_j) = x_j \cdot B_0$, we conclude that
\[
x_jx_0x_j^{-1} \cdot (v_j \cdot B_j) = x_j x_0 \cdot B_0 = x_j \cdot B_0 = v_j \cdot B_j.
\]
This shows that $v_j \cdot B_j \subset (\H^2)^r$ is stabilized by $\beta_j := x_j x_0 x_j^{-1} \in D^\times$. We now replace each $B_j$ by $v_j \cdot B_j$ and then all four conditions are satisfied.
\end{proof}

\subsection{Controlling the intersections in the quotient} \label{subsec:controlling}

\noindent Our next goal is to find a finite covering space of $M = \Gamma \backslash (\H^2)^r$ in which the images of the flats $A_i$ and $B_j$ from \cref{thm:configuration} are embedded submanifolds and to control the intersections of these submanifolds.

To ease notation, we fix throughout this subsection two maximal flats $A, B \subset (\H^2)^r$ that are either disjoint or intersect transversally in a single point and we fix $\alpha \in D^1$ and $\beta \in D^\times$ that stabilize $A$ and $B$, respectively. We assume that $\tau_1(\alpha),\ldots,\tau_r(\alpha)$ and $\tau_1(\beta),\ldots,\tau_r(\beta)$ each have two distinct real eigenvalues and that $\tau_1(\beta)$ is diagonalizable over $F(\sqrt{a})$.
We also assume that $A$ is $\Gamma$-compact.

\begin{proposition} \label{thm:centralizer}
There exists a subgroup of finite index $\Gamma_{\cent} \subset \Gamma$
such that every element of $\Gamma_\cent$ which stabilizes $A$ commutes with $\alpha$, and every element of $\Gamma_\cent$ which stabilizes $B$ commutes with $\beta$.
\end{proposition}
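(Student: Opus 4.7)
The plan is to take $\Gamma_\cent$ to be a sufficiently deep principal congruence subgroup $\Gamma(\a)$ of $\Gamma$ and to translate the centralizer conditions into congruences in the order $\Lambda$. By \cref{thm:polar-regular-quaternion-algebra} applied to $\alpha$ and $\beta$, the centralizers $C_{D^\times}(\alpha) \subset (D^\times)_A$ and $C_{D^\times}(\beta) \subset (D^\times)_B$ are of finite index. Setting $\Gamma_A := \Gamma \cap (D^\times)_A$, $F_A := \Gamma_A \cap C_{D^\times}(\alpha)$, and analogously $\Gamma_B$ and $F_B$, the quotients $\Gamma_A / F_A$ and $\Gamma_B / F_B$ are finite; I fix coset representatives $1 = g_1, g_2, \ldots, g_m$ and $1 = h_1, h_2, \ldots, h_\ell$ respectively. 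The key observation is that for $\gamma = g_i f$ with $f \in F_A$, the factor $f$ centralizes $\alpha$, so the conjugate $\gamma \alpha \gamma^{-1} = g_i \alpha g_i^{-1}$ depends only on the coset of $\gamma$ in $\Gamma_A / F_A$; in particular $\gamma \alpha \gamma^{-1} - \alpha$ vanishes precisely when $\gamma \in F_A$, and otherwise equals one of the finitely many nonzero elements $g_i \alpha g_i^{-1} - \alpha \in \Lambda$. The analogous statement holds for $\beta$ and the $h_j$.

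Next I establish a conjugation estimate. After replacing $\beta$ by an $F^\times$-scalar multiple lying in $\Lambda$ (which does not change $C_{D^\times}(\beta)$), both $\alpha$ and $\beta$ lie in $\Lambda$. Since $\Lambda$ is the $\O_F$-span of a quaternionic basis, it is stable under the canonical involution of $D$, so $\gamma^{-1} = \bar\gamma \in \Lambda$ for any $\gamma \in \Lambda^1$. For a nonzero ideal $\a \subset \O_F$, writing $\gamma = 1 + \epsilon$ with $\epsilon \in \a\Lambda$ and expanding $\gamma \alpha \gamma^{-1} - \alpha = (\epsilon\alpha - \alpha\epsilon)\gamma^{-1}$ shows $\gamma\alpha\gamma^{-1} - \alpha \in \a\Lambda$, and the identical calculation gives $\gamma \beta \gamma^{-1} - \beta \in \a\Lambda$.

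Finally, since $\Lambda$ is a finitely generated $\O_F$-module, the submodules $\a\Lambda$ form a neighborhood basis of $0$ as $\a$ ranges over nonzero ideals (this may also be seen as an incarnation of \cref{thm:adelic-points-basis}), so I may pick $\a$ such that $\a\Lambda$ contains none of the finitely many nonzero elements $g_i \alpha g_i^{-1} - \alpha$ ($i \geq 2$) and $h_j \beta h_j^{-1} - \beta$ ($j \geq 2$). Setting $\Gamma_\cent := \Gamma(\a)$, if $\gamma \in \Gamma_\cent \cap \Gamma_A$ then the estimate places $\gamma\alpha\gamma^{-1} - \alpha$ in $\a\Lambda$, while the key observation restricts it to the finite set $\{0\} \cup \{g_i\alpha g_i^{-1} - \alpha : i \geq 2\}$; by the choice of $\a$ the only possibility is $\gamma\alpha\gamma^{-1} - \alpha = 0$, so $\gamma$ centralizes $\alpha$. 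The argument for $\beta$ is identical. The main subtlety is that $\beta$ need not lie in $\Lambda$ or $\Lambda^1$, which would otherwise cause denominators to appear when handling $\beta^{-1}$; this is circumvented by first rescaling $\beta$ into $\Lambda$ and by working with conjugates $\gamma\beta\gamma^{-1}$ rather than commutators $[\gamma, \beta]$, so that no inverse of $\beta$ ever appears in the argument.
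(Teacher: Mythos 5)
Your proof is correct, and it replaces the paper's adelic argument with a more hands-on computation in the order $\Lambda$. Both proofs rest on the same two pillars: \cref{thm:polar-regular-quaternion-algebra} (finite index of the centralizer in the stabilizer) and the fact that a deep enough congruence condition can avoid any finite set of nontrivial elements. The paper realizes the second pillar topologically: it decomposes $(D^\times)_A$ into finitely many cosets $C_{D^\times}(\alpha)y_i$, observes that the sets $C_{\ag{GL}_D(\AA{F})}(\alpha)y_i$ are closed and miss the identity, and invokes \cref{thm:adelic-points-basis} to find $\a$ with $\ag{GL}_\Lambda(\OO{F})(\a)$ disjoint from all of them. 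You instead realize it algebraically: the identity $\gamma\alpha\gamma^{-1}-\alpha=(\epsilon\alpha-\alpha\epsilon)\gamma^{-1}$ for $\gamma=1+\epsilon\in\Gamma(\a)$ (using that $\gamma^{-1}=\bar\gamma\in\Lambda$ and $\a\Lambda$ is a two-sided ideal) places the conjugate difference in $\a\Lambda$, while the coset decomposition of $\Gamma_A$ modulo $F_A=\Gamma_A\cap C_{D^\times}(\alpha)$ shows this difference takes only finitely many values, all nonzero unless $\gamma\in F_A$; choosing $\a$ to miss these finitely many nonzero lattice elements forces commutativity. Your version is more elementary and makes no direct appeal to the adelic topology, and you correctly spot and handle the potential snag that $\beta\notin\Lambda$ by rescaling and by using conjugates $\gamma\beta\gamma^{-1}$ rather than commutators. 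The paper's adelic formulation, though, is of a piece with the arguments used later in \cref{thm:product-formula-closure} and \cref{thm:product-formula-corollary}, which do genuinely need the adelic framework, so the paper's choice gives a uniform presentation.
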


\begin{proof}
By \cref{thm:polar-regular-quaternion-algebra}, the centralizer $C_{D^\times}(\alpha)$ is a subgroup of finite index in $(D^\times)_A$. So there exist $y_1,\ldots,y_m \in (D^\times)_A$ with
\[
(D^\times)_A = C_{D^\times}(\alpha) \sqcup C_{D^\times}(\alpha)y_1 \sqcup \ldots \sqcup C_{D^\times}(\alpha)y_m.
\]
We now use the rings $\AA{F}$ and $\OO{F}$ from \cref{sec:adeles-and-congruence-subgroups}.
For each $i \in \{1,\ldots,m\}$, the set $C_{\ag{GL}_D(\AA{F})}(\alpha)y_i \subset \ag{GL}_D(\AA{F})$ is a closed subset that does not contain the identity. Hence, by \cref{thm:adelic-points-basis}, we find a nonzero ideal $\a \subset \O_F$ such that for all $i \in \{1,\ldots,m\}$, we have $\ag{GL}_{\Lambda}(\OO{F})(\a) \cap C_{\ag{GL}_D(\AA{F})}(\alpha)y_i = \emptyset$.
Consequently, we have
\[
\Gamma(\a)_A \subset (D^\times)_A \cap \ag{GL}_\Lambda(\OO{F})(\a) \subset C_{D^\times}(\alpha).
\]
Similarly, we find a nonzero ideal $\b \subset \O_F$ with $\Gamma(\b)_B \subset C_{D^\times}(\beta)$.
Then $\Gamma_\cent := \Gamma(\a) \cap \Gamma(\b)$ is of finite index in $\Gamma$, and the proof is complete.
\end{proof}

We can now show that the images of $A$ and $B$ in some finite covering space of $\Gamma \backslash (\H^2)^r$ are embedded submanifolds:

\begin{proposition} \label{thm:embedding}
There exists a subgroup of finite index $\Gamma_{\emb} \subset \Gamma_{\cent}$ so that for every subgroup of finite index $\Gamma' \subset \Gamma_{\emb}$, the natural maps
\[
\Gamma'_A \backslash A \to \Gamma' \backslash (\H^2)^r
\quad \text{and} \quad
\Gamma'_B \backslash B \to \Gamma' \backslash (\H^2)^r
\]
are closed embeddings whose images are orientable flat totally geodesic $r$-dimensional submanifolds of $\Gamma' \backslash (\H^2)^r$.
\end{proposition}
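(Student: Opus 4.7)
The plan is to deduce \cref{thm:embedding} from the geometric cycles theorem \cref{thm:geometric-cycles} by realizing the flats $A$ and $B$ as the symmetric subspaces associated to certain reductive $\Q$-subgroups of a $\Q$-group containing $\Gamma$ arithmetically.

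First I embed the situation into the connected, simply connected, semisimple $\Q$-group $\ag{G} := \Res_{F/\Q}\ag{SL}_D$. Its group of real points factors as $\ag{G}(\R) = \SL_2(\R)^r \times K_0$ with $K_0$ compact (coming from the infinite places of $F$ at which $D$ ramifies), and its associated symmetric space is $(\H^2)^r$; the group $\Gamma \subset \Lambda^1 \subset \ag{G}(\Q)$ sits inside as an arithmetic subgroup. I then introduce the $\Q$-subgroups $\ag{H}_A := C_{\ag{G}}(\alpha)$ and $\ag{H}_B := C_{\ag{G}}(\beta)^0$, where conjugation by $\beta$ (viewed as an element of $\Res_{F/\Q}\ag{GL}_D(\Q)$) is a $\Q$-automorphism of $\ag{G}$ of order two and we take the identity component of its fixed-point subgroup. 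Since $\alpha$ and $\beta$ are semisimple (each $\tau_i(\alpha)$ and $\tau_i(\beta)$ has two distinct real eigenvalues, and $\beta^2 = a$ is central), both $\ag{H}_A$ and $\ag{H}_B$ are connected reductive $\Q$-subgroups of $\ag{G}$; the connectedness of $\ag{H}_A$ uses Steinberg's theorem together with the simple connectedness of $\ag{G}$.

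Next, I verify that the totally geodesic submanifolds $j_{\ag{H}_A(\R)}(X_{\ag{H}_A(\R)})$ and $j_{\ag{H}_B(\R)}(X_{\ag{H}_B(\R)})$ coincide with $A$ and $B$. By \cref{thm:polar-regular-quaternion-algebra}, the arithmetic subgroup $C_{D^\times}(\alpha) \subset \ag{H}_A(\R)$ already acts transitively (by orientation-preserving isometries) on $A$, so after choosing a maximal compact subgroup $K \subset \ag{G}(\R)$ whose intersection with $\ag{H}_A(\R)$ stabilises a basepoint in $A$, the orbit map identifies $X_{\ag{H}_A(\R)}$ with $A$; the same reasoning applies to $B$. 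Now I apply \cref{thm:geometric-cycles} to $\ag{G}$ with the subgroups $\ag{H}_A$ and $\ag{H}_B$, obtaining torsion-free finite-index subgroups $\Gamma_0^A, \Gamma_0^B \subset \Gamma$, and set $\Gamma_\emb := \Gamma_\cent \cap \Gamma_0^A \cap \Gamma_0^B$. For any finite-index $\Gamma' \subset \Gamma_\emb$, the crucial identification $\Gamma' \cap \ag{H}_A(\R) = \Gamma'_A$ follows by combining \cref{thm:centralizer} (every element of $\Gamma_\cent$ stabilising $A$ commutes with $\alpha$) with \cref{thm:polar-regular-quaternion-algebra} (the centraliser $C_{D^\times}(\alpha)$ is contained in $(D^\times)_A$); likewise for $B$. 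Thus the two maps in \cref{thm:geometric-cycles} are precisely the maps $\Gamma'_A \backslash A \to \Gamma' \backslash (\H^2)^r$ and $\Gamma'_B \backslash B \to \Gamma' \backslash (\H^2)^r$ of the statement; their images are orientable and totally geodesic by the theorem and are flat and $r$-dimensional because $A$ and $B$ are.

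The main technical obstacle will be the clean matching of algebraic and geometric data: one must rigorously identify $X_{\ag{H}_A(\R)}$ with $A$ (and $X_{\ag{H}_B(\R)}$ with $B$) as totally geodesic submanifolds of $(\H^2)^r$, and handle the possible non-connectedness of the Lie groups $\ag{H}_A(\R)$ and $\ag{H}_B(\R)$ at the real-points level when checking the identification $\Gamma' \cap \ag{H}_A(\R) = \Gamma'_A$. A further subtlety is that $\beta \notin \ag{G}(\Q)$, so one must carefully set up conjugation by $\beta$ as a semisimple $\Q$-automorphism of $\ag{G}$ and confirm that the identity component of its fixed-point subgroup is a connected reductive $\Q$-subgroup to which \cref{thm:geometric-cycles} applies.
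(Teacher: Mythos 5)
Your proposal follows essentially the same route as the paper: form $\ag{G} := \Res_{F/\Q}\ag{SL}_D$, take the centralizers of $\alpha$ and $\beta$ as connected reductive $\Q$-subgroups, identify the associated symmetric subspaces with the flats $A$ and $B$, and then apply \cref{thm:geometric-cycles} together with \cref{thm:centralizer} and \cref{thm:polar-regular-quaternion-algebra} to match $\Gamma'_A$ with $\Gamma' \cap \ag{H}_A(\Q)$. The only difference is cosmetic: you invoke Steinberg's theorem (and pass to an identity component for $\ag{H}_B$) to get connectedness, whereas the paper observes directly that, since $\tau_1(\alpha)$ (resp.\ $\tau_1(\beta)$) is regular semisimple, the centralizer in $\ag{SL}_D$ is a rank-one torus, hence automatically connected and reductive, making the identity-component precaution unnecessary.
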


\begin{proof}
The algebraic group $\ag{SL}_D$ is connected and semisimple.
Because of $\alpha \in \ag{SL}_D(F)$, there exists by \cite[pp.~33--35]{Milne-Book} a unique smallest $F$-subgroup $C_{\ag{SL}_D}(\alpha)$ of $\ag{SL}_D$ such that for all fields $K$ with $F \subset K$, we have
\[
\bigl(C_{\ag{SL}_D}(\alpha)\bigr)(K) = C_{\ag{SL}_D(K)}(\alpha).
\]
Since $\tau_1(\alpha)$ is a diagonalizable matrix,
we see that $C_{\ag{SL}_D}(\alpha)$ becomes isomorphic to $\ag{GL}_1$ over an algebraic closure of $F$,
so $C_{\ag{SL}_D}(\alpha)$ is connected and reductive.

Let $\ag{G} := \Res_{F/\Q}(\ag{SL}_D)$ and $\ag{H} := \Res_{F/\Q}\bigl(C_{\ag{SL}_D}(\alpha)\bigr)$. Then $\ag{G}$ is a connected semisimple $\Q$-group and $\ag{H}$ is a connected reductive $\Q$-subgroup of $\ag{G}$.
For each $i \in \{r+1,\ldots,d\}$, we choose an isomorphism $\rho_i \colon D \otimes_F \res_{\sigma_i}(\R) \congrightarrow \Hamilton$, where $\Hamilton$ is the real Hamilton quaternion algebra. The isomorphisms $\tau_1,\ldots,\tau_r$ and $\rho_{r+1},\ldots,\rho_d$ induce an isomorphism
\begin{align*}
G := \ag{G}(\R) \congrightarrow \SL_2(\R)^r \times (\Hamilton^1)^{d-r},
\end{align*}
which maps the group $H := \ag{H}(\R)$ to $\prod_{i=1}^r C_{\SL_2(\R)}(\tau_i(\alpha)) \times \prod_{i=r+1}^{d} C_{\Hamilton^1}(\rho_i(\alpha))$. Let $K \subset G$ be the preimage of $\SO(2)^r \times (\Hamilton^1)^{d-r}$ under this isomorphism. Then $K$ is a maximal compact subgroup of $G$ and $K_H := K \cap H$ is a maximal compact subgroup of $H$.
Consider now the quotient spaces $X_G := G/K$ and $X_H := H/K_H$ and the embedding $j_H \colon X_H \hookrightarrow X_G$ induced by the inclusion $H \hookrightarrow G$. Fix a point $x_0 \in A$. Then the diffeomorphism
\[
X_G \congrightarrow (\H^2)^r,
\quad
gK \mapsto g \cdot x_0
\]
maps $j_H(X_H)$ onto the flat $A$.
Note that $\ag{H}(\Q) = C_{D^1}(\alpha)$. So by \cref{thm:centralizer,thm:polar-regular-quaternion-algebra}, we have $\Gamma'_A = \Gamma' \cap \ag{H}(\Q)$ for every subgroup of finite index $\Gamma' \subset \Gamma_\cent$.
Hence, by \cref{thm:geometric-cycles}, there exists a subgroup of finite index $\Gamma_0 \subset \Gamma_\cent$ such that for all subgroups of finite index $\Gamma' \subset \Gamma_0$, the map $\Gamma'_A \backslash A \to \Gamma' \backslash (\H^2)^r$ is a closed embedding whose image is an orientable flat totally geodesic submanifold.

Similarly, we obtain a subgroup of finite index $\Gamma_1 \subset \Gamma_\cent$ such that for every subgroup of finite index $\Gamma' \subset \Gamma_1$, the map $\Gamma'_B \backslash B \to \Gamma' \backslash (\H^2)^r$ is a closed embedding whose image is an orientable flat totally geodesic submanifold.
We set $\Gamma_\emb := \Gamma_0 \cap \Gamma_1$ and the proof is complete.
\end{proof}

\begin{lemma} \label{thm:technical-disjoint-union}
Let $\Gamma' \subset \Gamma_{\emb}$ be a subgroup of finite index. Then $\Gamma'A$ is a disjoint union of copies of $A$, that is, for any $\gamma \in \Gamma'$, we have $\gamma A = A$ or $\gamma A \cap A = \emptyset$. Similarly, $\Gamma'B$ is a disjoint union of copies of $B$.
\end{lemma}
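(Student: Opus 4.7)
The plan is to deduce the lemma directly from the embedding statement of \cref{thm:embedding}. Suppose $\gamma \in \Gamma'$ satisfies $\gamma A \cap A \neq \emptyset$; I need to show $\gamma A = A$. Pick any point $x \in A$ with $\gamma x \in A$. Then $x$ and $\gamma x$ both lie in $A$ and project to the same point of $\Gamma' \backslash (\H^2)^r$, so by the injectivity of the map $\Gamma'_A \backslash A \to \Gamma' \backslash (\H^2)^r$ asserted in \cref{thm:embedding}, they already represent the same class in $\Gamma'_A \backslash A$. Consequently, there exists $\delta \in \Gamma'_A$ with $\delta x = \gamma x$, and thus $\delta^{-1}\gamma$ fixes the point $x$.

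To finish, I would argue that the action of $\Gamma'$ on $(\H^2)^r$ is free. The group $\Gamma$ is torsion-free by the standing assumptions of this section and acts on $(\H^2)^r$ through the map $x \mapsto (\tau_1(x),\ldots,\tau_r(x))$, which is injective because $D$ embeds into $D \otimes_F \res_{\sigma_1}(\R)$ via $\tau_1$. The image $\Delta$ is a lattice in $\SL_2(\R)^r$, so its action, and hence that of the subgroup $\Gamma' \subset \Gamma$, is properly discontinuous. Proper discontinuity together with torsion-freeness forces every point stabilizer to be trivial. Therefore $\delta^{-1}\gamma = 1$, which yields $\gamma = \delta \in \Gamma'_A$ and hence $\gamma A = A$. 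The same reasoning, applied to the other embedding $\Gamma'_B \backslash B \to \Gamma' \backslash (\H^2)^r$ provided by \cref{thm:embedding}, gives the corresponding statement for $B$.

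There is no substantial obstacle: the lemma is essentially a tautological consequence of the embedding property of \cref{thm:embedding} combined with freeness of the $\Gamma'$-action. The only point requiring a moment of care is checking that the action is in fact free, but this is immediate from torsion-freeness and proper discontinuity.
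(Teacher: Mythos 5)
Your proof is correct and follows essentially the same route as the paper's: use injectivity of the embedding $\Gamma'_A \backslash A \to \Gamma' \backslash (\H^2)^r$ from \cref{thm:embedding} to find $\delta \in \Gamma'_A$ making $\delta^{-1}\gamma$ fix a point, then kill it using torsion-freeness. You spell out the deduction from torsion-freeness to free action (via proper discontinuity and finiteness of point stabilizers) more explicitly than the paper does, but the argument is the same.
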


\begin{proof}
Let $\gamma \in \Gamma'$ and assume that $\gamma A \cap A \neq \emptyset$. Then there exist $x_1,x_2 \in A$ with $x_2 = \gamma x_1$, and so we have $\Gamma' x_1 = \Gamma' x_2$. Since the map $\Gamma'_A \backslash A \to \Gamma' \backslash (\H^2)^r$ is injective by \cref{thm:embedding}, it follows that $\Gamma'_{A} x_1 = \Gamma'_{A} x_2$. So there exists some $\delta \in \Gamma'_A$ with $x_1 = \delta x_2$. Hence, we have $\delta\gamma x_1 = x_1$, and because $\Gamma'$ is torsion-free, this implies that $\gamma = \delta^{-1}$. So we have $\gamma \in \Gamma'_A$, or, in other words, $\gamma A = A$.
The statement for $\Gamma'B$ can be proven analogously.
\end{proof}	

\begin{remark} \label{rem:induced-orientations}
For any subgroup of finite index $\Gamma' \subset \Gamma_{\emb}$, the images of the flats $A$ and $B$ in $\Gamma' \backslash (\H^2)^r$ can be oriented as follows: We choose orientations $A^+$ on $A$ and $B^+$ on $B$. Then we define $\Gamma'$-invariant orientations on $\Gamma'A$ and $\Gamma'B$ by $(\gamma A)^+ := \gamma A^+$ and $(\gamma B)^+ := \gamma B^+$ for any $\gamma \in \Gamma'$.
Note that by \cref{thm:embedding}, the maps $\Gamma'A \to \Gamma'_A \backslash A$ and $\Gamma'B \to \Gamma'_B \backslash B$ are covering maps and their images are diffeomorphic to the images of $A$ and $B$ in $\Gamma' \backslash (\H^2)^r$, respectively.
Since $\Gamma_{\emb} \subset \Gamma_\cent$, we have by \cref{thm:centralizer,thm:polar-regular-quaternion-algebra} that $\Gamma'_A$ and $\Gamma'_B$ act by orientation-preserving isometries on $A$ and $B$, respectively. So we get induced orientations on the images of $A$ and $B$ in $\Gamma' \backslash (\H^2)^r$.
\end{remark}

Our next task is to find a finite covering space of the locally symmetric space $\Gamma \backslash (\H^2)^r$ in which we can control the intersection of the images of $A$ and $B$. We start with some technical results:

\begin{lemma} \label{thm:technical-double-cosets}
Let $\Gamma' \subset \Gamma_{\emb}$ be a subgroup of finite index. Then for any $\gamma_1,\gamma_2 \in \Gamma'$, we have
$\Gamma'_{B}\gamma_1 A = \Gamma'_{B}\gamma_2 A$ if and only if there exists some $\delta \in \Gamma'_{B}$ with $\gamma_1 A = \delta \gamma_2 A$.
\end{lemma}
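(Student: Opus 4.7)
The reverse implication is immediate: if $\gamma_1 A = \delta \gamma_2 A$ for some $\delta \in \Gamma'_B$, then multiplying both sides on the left by $\Gamma'_B$ gives $\Gamma'_B \gamma_1 A = \Gamma'_B \delta \gamma_2 A = \Gamma'_B \gamma_2 A$, since $\Gamma'_B \delta = \Gamma'_B$.

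For the forward implication, suppose $\Gamma'_B \gamma_1 A = \Gamma'_B \gamma_2 A$ as subsets of $(\H^2)^r$. Pick any point $x \in \gamma_1 A$ (which is nonempty). Then $x \in \Gamma'_B \gamma_2 A$, so there exists some $\delta \in \Gamma'_B$ with $x \in \delta \gamma_2 A$. In particular, the two maximal flats $\gamma_1 A$ and $\delta \gamma_2 A$ in $(\H^2)^r$ have nonempty intersection.

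The plan from here is to transport this intersection back to $A$ and invoke \cref{thm:technical-disjoint-union}. Translating by $\gamma_1^{-1}$ yields $A \cap (\gamma_1^{-1} \delta \gamma_2) A \neq \emptyset$. The element $\gamma := \gamma_1^{-1} \delta \gamma_2$ lies in $\Gamma'$ because $\gamma_1, \gamma_2, \delta \in \Gamma'$ (note that $\delta \in \Gamma'_B \subset \Gamma'$). Now \cref{thm:technical-disjoint-union} applies: we must have $\gamma A = A$, since the alternative $\gamma A \cap A = \emptyset$ is excluded. Multiplying by $\gamma_1$ on the left gives $\delta \gamma_2 A = \gamma_1 A$, which is precisely the desired conclusion.

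There is no real obstacle here; the argument is a direct bookkeeping exercise combining the definition of the orbit equality with \cref{thm:technical-disjoint-union}. The only subtle point to verify is that the element $\gamma_1^{-1}\delta\gamma_2$ genuinely lies in $\Gamma'$, which is clear from $\delta \in \Gamma'_B \subset \Gamma'$, so that the dichotomy provided by \cref{thm:technical-disjoint-union} is applicable.
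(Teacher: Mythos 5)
Your proof is correct and takes essentially the same approach as the paper: the reverse direction is immediate, and the forward direction applies \cref{thm:technical-disjoint-union} to the nonempty intersection $\gamma_1 A \cap \delta\gamma_2 A$. You merely spell out the left-translation by $\gamma_1^{-1}$ that the paper leaves implicit.
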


\begin{proof}
If there exists some $\delta \in \Gamma'_{B}$ with $\gamma_1 A = \delta \gamma_2 A$, then we also have $\Gamma'_{B}\gamma_1 A = \Gamma'_{B}\gamma_2 A$. Conversely, if $\Gamma'_B\gamma_1 A = \Gamma'_{B}\gamma_2 A$, then $\gamma_1 A \cap \delta \gamma_2 A \neq \emptyset$ for some $\delta \in \Gamma'_{B}$, and so $\gamma_1 A = \delta \gamma_2 A$ by \cref{thm:technical-disjoint-union}.
\end{proof}

\begin{proposition} \label{thm:finiteness}
For every subgroup of finite index $\Gamma' \subset \Gamma_\emb$, we have
\[
\# \bigl\{ \Gamma'_B\gamma A : \gamma \in \Gamma' \text{ with } \gamma A \cap B \neq \emptyset \bigr\} < \infty.
\]
\end{proposition}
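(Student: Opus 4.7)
The plan is to realize the set $S := \{\Gamma'_B \gamma A : \gamma \in \Gamma',\ \gamma A \cap B \neq \emptyset\}$ as the image of the compact space $\overline{A} \cap \overline{B}$ — where $\overline{A}$ and $\overline{B}$ denote the images of $A$ and $B$ in $\Gamma' \backslash (\H^2)^r$ — under a locally constant map into a discrete set, and to conclude by compactness. That $\overline{A} \cap \overline{B}$ is compact follows from the embedding property in \cref{thm:embedding} together with the observation that $\Gamma'_A = \Gamma' \cap \Gamma_A$ has index at most $[\Gamma : \Gamma'] < \infty$ in $\Gamma_A$, so $\overline{A} = \Gamma'_A \backslash A$ is a finite cover of the compact quotient $\Gamma_A \backslash A$; since $\overline{B}$ is a closed submanifold, $\overline{A} \cap \overline{B}$ is closed in the compact $\overline{A}$.

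I would next construct a surjection $\Phi \colon \overline{A} \cap \overline{B} \twoheadrightarrow S$ as follows. For $\overline{x} \in \overline{A} \cap \overline{B}$, choose lifts $p \in A$ and $q \in B$ of $\overline{x}$. Because $\Gamma'$ is torsion-free and acts properly discontinuously, hence freely, on $(\H^2)^r$, there is a unique $\gamma \in \Gamma'$ with $q = \gamma p$, and then $q \in \gamma A \cap B$; set $\Phi(\overline{x}) := \Gamma'_B \gamma A \in S$. By \cref{thm:technical-disjoint-union}, any two lifts of $\overline{x}$ in $A$ differ by an element of $\Gamma'_A$ and any two lifts in $B$ differ by an element of $\Gamma'_B$; replacing $p$ by $\delta_A p$ and $q$ by $\delta_B q$ replaces $\gamma$ by $\delta_B \gamma \delta_A^{-1}$, which gives the same element of $S$. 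Surjectivity is immediate: for $\Gamma'_B \gamma A \in S$ one picks any $q \in \gamma A \cap B$ and sets $\overline{x} := \overline{q}$.

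It remains to prove that $\Phi$ is locally constant (equivalently, continuous when $S$ carries the discrete topology); then $S = \Phi(\overline{A} \cap \overline{B})$ is a compact subset of a discrete space, hence finite. Since $\Gamma'$ acts properly discontinuously and freely on $(\H^2)^r$, every point $p_0$ admits an open neighborhood $V$ with $\gamma V \cap V = \emptyset$ for all $\gamma \neq 1$ in $\Gamma'$. Given $\overline{x}_0$ with lifts $p_0 \in A$ and $q_0 = \gamma_0 p_0 \in B$, the open sets $V$ and $\gamma_0 V$ yield (via the closed embeddings of \cref{thm:embedding}) coordinate neighborhoods of $\overline{x}_0$ inside $\overline{A}$ and $\overline{B}$; for any sufficiently close $\overline{x}$, its lifts can be taken to be $p \in V \cap A$ and $q \in \gamma_0 V \cap B$, and the associated $\gamma$ satisfies $\gamma_0^{-1}\gamma p = \gamma_0^{-1}q \in V$, so $\gamma_0^{-1}\gamma V \cap V \neq \emptyset$, forcing $\gamma = \gamma_0$ and hence $\Phi(\overline{x}) = \Phi(\overline{x}_0)$.

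The main obstacle is carrying out this local constancy argument cleanly: one has to combine the freeness of the $\Gamma'$-action on $(\H^2)^r$ (to ensure uniqueness of $\gamma$), proper discontinuity (to obtain the neighborhood $V$), and the disjointness of distinct translates $\gamma A$ and $\delta B$ from \cref{thm:technical-disjoint-union} (to ensure that lifts in $A$ and $B$ are locally unique), in order to pin down $\gamma$ under small perturbations of $\overline{x}$.
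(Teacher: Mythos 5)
Your proof is correct and rests on the same underlying compactness strategy as the paper's, but the technical execution is genuinely different, and it is worth noting the contrast. The paper observes that $\mathcal{F} \cap \mathcal{G}$ is a compact \emph{manifold} (being a transverse-style intersection of closed totally geodesic submanifolds, with $\mathcal{F}$ compact), hence has finitely many path-components, and then shows that the assignment of the double coset $\Gamma'_B\gamma A$ is constant on path-components by lifting a connecting path through the covering $p_B \colon \Gamma'B \to \Gamma'_B\backslash B$ and using \cref{thm:technical-disjoint-union} to conclude that the lifted path cannot jump between distinct copies of $A$. You instead prove that your map $\Phi$ is \emph{locally constant} directly, using a wandering neighborhood $V$ supplied by the free, properly discontinuous action of $\Gamma'$, and then invoke the standard fact that a locally constant map on a compact space has finite image (the fibers form an open partition, and compactness bounds the number of pieces). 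Your route has two modest advantages: it bypasses the covering-space path-lifting argument entirely, and it never needs the intersection to be a manifold (only that it is compact), whereas the paper relies on the manifold structure to control path-components. On the other hand, the paper's argument is slicker in that the compact-manifold structure of $\mathcal{F} \cap \mathcal{G}$ is already needed elsewhere (e.g.\ in the intersection-number counting of \cref{thm:intersection}), so it comes for free. One small wording point: the uniqueness of $\gamma$ with $q=\gamma p$ and the disjointness of translates in \cref{thm:technical-disjoint-union} are both consequences of $\Gamma'$ being torsion-free; you correctly use both, and it is worth making explicit (as you do implicitly) that the $\Gamma'_A$- and $\Gamma'_B$-ambiguities in the lifts are precisely what the double coset $\Gamma'_B \gamma A$ quotients out, which is why $\Phi$ is well-defined.
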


\begin{proof}
Let $\pi \colon (\H^2)^r \to \Gamma' \backslash (\H^2)^r$ be the projection map. We write $\mathcal{F} := \pi(A)$ and $\mathcal{G} := \pi(B)$.
Since $\mathcal{F}$ and $\mathcal{G}$ are closed totally geodesic submanifolds of $\Gamma' \backslash (\H^2)^r$ and $\mathcal{F}$ is compact, it follows that $\mathcal{F} \cap \mathcal{G}$ is a compact manifold. In particular, $\mathcal{F} \cap \mathcal{G}$ has only finitely many path-connected components.
Thus, it suffices to show that for all $\gamma_0,\gamma_1 \in \Gamma'$ for which there is a continuous path in $\mathcal{F} \cap \mathcal{G}$ connecting a point in $\pi(\gamma_0 A \cap B)$ to a point in $\pi(\gamma_1 A \cap B)$, we have
\[
\Gamma'_{B}\gamma_0{A} = \Gamma'_{B}\gamma_1{A}.
\]
Let $c \colon [0,1] \to A' \cap B'$ be such a path and choose preimages $x_i \in \gamma_i A \cap B$ with $\pi(x_i) = c(i)$ for $i \in \{0,1\}$.
Since $j_B \colon \Gamma'_{B} \backslash B \to B'$ is a diffeomorphism, $c$ induces a path
\[
c_B := j_B^{-1} \circ c \colon [0,1] \to \Gamma'_{B} \backslash B.
\]
The map $p_B \colon \Gamma' B \to \Gamma'_B \backslash B$, $\gamma \cdot b \mapsto \Gamma_B' b$ is well-defined by \cref{thm:embedding} and is a covering map. By the lifting property of $p_B$ and the fact that $p_B(x_0) = c_B(0)$, there exists a path $\widetilde{c} \colon [0,1] \to \Gamma'B$ with $\widetilde{c}(0) = x_0$ such that the diagram
\[
\begin{tikzcd}[column sep=large]
\Gamma'B \ar{r}{p_B} & \Gamma'_B\backslash B \ar{r}{j_B} & B' \\
{}& {}[0,1]  \ar[dashed]{ul}{\widetilde{c}} \ar{u}{c_B} \ar[swap]{ur}{c}
\end{tikzcd}
\]
commutes. From $c([0,1]) \subset A'$, we deduce that $\widetilde{c}([0,1]) \subset \Gamma' A$. But $\Gamma' A$ is a disjoint union of copies of $A$ by \cref{thm:technical-disjoint-union}, and so $\widetilde{c}(0) = x_0 \in \gamma_0A$ implies that the image of $\widetilde{c}$ must be fully contained in $\gamma_0 A$. In particular, $\widetilde{c}(1) \in \gamma_0 A$.

On the other hand, using $\pi(\widetilde{c}(1)) = c(1) = \pi(x_1)$ and the injectivity of $j_B$, we see that
\[
	\Gamma'_{B} \widetilde{c}(1)
= 	p_B(\widetilde{c}(1))
=	p_B(x_1)
= 	\Gamma'_{B} x_1.
\]
Because of $x_1 \in \gamma_1 A$, this shows that $\widetilde{c}(1) \in \delta \gamma_1 A$ for some $\delta \in \Gamma'_{B}$.
In conclusion, we have $\widetilde{c}(1) \in \gamma_0 A \cap \delta \gamma_1 A$, and so \cref{thm:technical-disjoint-union} implies that $\delta \gamma_1 A = \gamma_0 A$. Hence, by \cref{thm:technical-double-cosets}, we have
\[
\Gamma'_{B}\gamma_0{A} = \Gamma'_{B}\gamma_1{A}.
\tag*{\qedhere}
\]
\end{proof}

\begin{corollary} \label{thm:surjective-map}
Let $\Gamma' \subset \Gamma_{\emb}$ be a subgroup of finite index. Then there exist $\gamma_1,\ldots,\gamma_m \in \Gamma'$ such that the intersection of the images of $A$ and $B$ in $\Gamma' \backslash (\H^2)^r$ is the image of the projection map
\[
\bigcup_{i=1}^m \gamma_i A \cap B \to \Gamma' \backslash (\H^2)^r.
\]
\end{corollary}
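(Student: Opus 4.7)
The plan is to prove the two set-theoretic inclusions between the intersection $\mathcal{F}\cap \mathcal{G}$ of the images of $A$ and $B$ in $\Gamma'\backslash (\H^2)^r$ and the image of the projection $\bigsqcup_i (\gamma_i A\cap B)\to \Gamma'\backslash(\H^2)^r$. Let $\pi\colon (\H^2)^r\to \Gamma'\backslash(\H^2)^r$ denote the quotient map and let $\mathcal{F}:=\pi(A)$, $\mathcal{G}:=\pi(B)$. To choose the representatives, I would first invoke \cref{thm:finiteness}, which guarantees that the set of cosets $\{\Gamma'_B\gamma A : \gamma \in \Gamma',\ \gamma A \cap B \neq \emptyset\}$ is finite, and pick $\gamma_1,\ldots,\gamma_m\in\Gamma'$ representing these cosets.

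For the inclusion from the union into $\mathcal{F}\cap\mathcal{G}$, I would just note that each $\gamma_i A\cap B$ sits inside $\gamma_i A\subset \Gamma' A$ and inside $B$, so its $\pi$-image is contained in $\mathcal{F}\cap \mathcal{G}$. For the reverse inclusion, given $y\in \mathcal{F}\cap\mathcal{G}$, I would pick preimages $a\in A$ and $b\in B$ with $\pi(a)=\pi(b)=y$; then $b=\gamma a$ for some $\gamma\in \Gamma'$, so $b\in \gamma A\cap B$, and in particular $\gamma A\cap B\neq \emptyset$. By the choice of the $\gamma_i$, there is an index $i$ with $\Gamma'_B\gamma A=\Gamma'_B\gamma_i A$, and then \cref{thm:technical-double-cosets} furnishes some $\delta\in\Gamma'_B$ with $\gamma A=\delta\gamma_i A$. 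Therefore
\[
b\in \gamma A\cap B = \delta\gamma_i A\cap \delta B = \delta(\gamma_i A\cap B),
\]
so $b=\delta c$ for some $c\in \gamma_i A\cap B$, and since $\delta\in \Gamma'$ we get $y=\pi(b)=\pi(c)$, putting $y$ in the image of the projection from $\gamma_i A\cap B$.

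This argument is essentially a bookkeeping exercise combining \cref{thm:finiteness} and \cref{thm:technical-double-cosets}, and I do not anticipate a serious obstacle; the only subtle point is ensuring that passing from a representative $\gamma$ with $\gamma A\cap B\neq \emptyset$ to a fixed $\gamma_i$ in the same $\Gamma'_B$-coset still produces the same image in the quotient, which is exactly what the equality $\gamma A\cap B=\delta(\gamma_i A\cap B)$ gives us together with $\delta\in \Gamma'$.
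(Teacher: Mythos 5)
Your proposal is correct and follows essentially the same route as the paper: both invoke \cref{thm:finiteness} to choose the representatives $\gamma_1,\ldots,\gamma_m$, then use \cref{thm:technical-double-cosets} to replace an arbitrary $\gamma$ with $\gamma A\cap B\neq\emptyset$ by $\delta\gamma_i$ for some $\delta\in\Gamma'_B$, and finally observe that $\delta B=B$ lets one rewrite $\gamma A\cap B=\delta(\gamma_i A\cap B)$ so that the image in the quotient is unchanged.
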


\begin{proof}
By \cref{thm:finiteness}, there exist $\gamma_1,\ldots,\gamma_m \in \Gamma'$ with
\begin{equation} \label{eq:projection-surjective-map}
\bigl\{ \Gamma_B'\gamma A : \gamma \in \Gamma' \text{ with } \gamma A \cap B \neq \emptyset \bigr\}
=
\bigl\{
\Gamma'_B\gamma_1 A,
\ldots,
\Gamma'_B\gamma_m A
\bigr\}.
\end{equation}
Let $\pi \colon (\H^2)^r \to \Gamma' \backslash (\H^2)^r$ be the projection map. For each $i \in \{1,\ldots,m\}$, we have $\pi(\gamma_i A \cap B) \subset \pi(A) \cap \pi(B)$. Conversely, let $z \in \pi(A) \cap \pi(B)$. Then there is some $x \in \Gamma' A \cap B$ with $z = \Gamma'x$. Let $\gamma \in \Gamma'$ with $x \in \gamma A$. By \eqref{eq:projection-surjective-map}, we have $\Gamma'_B \gamma A = \Gamma'_B \gamma_i A$ for some $i \in \{1,\ldots,m\}$. So by \cref{thm:technical-double-cosets}, there exists some $\delta \in \Gamma'_B$ with $\gamma A = \delta \gamma_i A$.
Hence, there is some $y \in A$ with $x = \delta \gamma_i y$. From $\gamma_i y = \delta^{-1}x \in B$ and $\Gamma' x = \Gamma' \delta \gamma_i y = \Gamma' \gamma_i y$, we deduce that $z = \Gamma'x \in \pi(\gamma_iA \cap B)$.
\end{proof}

The next two lemmas will be used in the proof of \cref{thm:product-formula-closure}.

\begin{lemma} \label{thm:centralizer-intersection}
For the centralizers of $\alpha$ and $\beta$ in $D \otimes_F \AA{F}$, we have
\[
C_{D \otimes_F \AA{F}}(\beta) \cap C_{D \otimes_F \AA{F}}(\alpha)
= \AA{F}.
\]
\end{lemma}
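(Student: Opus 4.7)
The plan is to identify the two centralizers explicitly as $F$-linear extensions of scalar and reduce to a linear algebra statement, with the transversality/disjointness of $A$ and $B$ feeding in only at one crucial step.

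First I would observe that $\alpha \notin F$ and $\beta \notin F$, because $\tau_1(\alpha)$ and $\tau_1(\beta)$ each have two distinct real eigenvalues and hence are not scalar matrices. In a quaternion algebra over $F$, every element outside the center generates a $2$-dimensional commutative $F$-subalgebra which coincides with its centralizer, so $C_D(\alpha) = F[\alpha]$ and $C_D(\beta) = F[\beta]$. The $F$-linear map $\phi_\alpha \colon D \to D$, $x \mapsto x\alpha - \alpha x$, has kernel $F[\alpha]$; since $\AA{F}$ is flat over the field $F$, tensoring gives
\[
C_{D \otimes_F \AA{F}}(\alpha) = F[\alpha] \otimes_F \AA{F},
\]
and similarly $C_{D \otimes_F \AA{F}}(\beta) = F[\beta] \otimes_F \AA{F}$.

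The key geometric input is that $F[\alpha] \neq F[\beta]$, i.e.\ $\beta \notin F[\alpha]$. Assume for contradiction that $\beta \in F[\alpha]$. Then $\beta$ commutes with $\alpha$, so $\beta \in C_{D^\times}(\alpha) \subset (D^\times)_A$ by \cref{thm:polar-regular-quaternion-algebra}, and hence $\beta \cdot A = A$. Since $\tau_1(\beta),\ldots,\tau_r(\beta)$ each have two distinct real eigenvalues, \cref{thm:polar-regular-quaternion-algebra} also asserts that $B$ is the \emph{unique} maximal flat stabilized by $\beta$; thus $A = B$, contradicting the standing assumption that $A$ and $B$ are either disjoint or meet in a single point (since $r \geq 1$ makes $A$ infinite).

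Once $F[\alpha] \neq F[\beta]$ is established, both being $2$-dimensional $F$-subalgebras of $D$ containing $F$ forces $F[\alpha] \cap F[\beta] = F$. The final step is a tensor product computation: extending bases, pick $u \in F[\alpha]$ and $v \in F[\beta]$ with $\{1,u\}$ a basis of $F[\alpha]$ and $\{1,v\}$ a basis of $F[\beta]$; then $1,u,v$ are $F$-linearly independent and can be completed to an $F$-basis $\{1,u,v,w\}$ of $D$. With respect to the induced $\AA{F}$-basis of $D \otimes_F \AA{F}$, we have $F[\alpha] \otimes_F \AA{F} = \AA{F}\cdot 1 \oplus \AA{F}\cdot u$ and $F[\beta] \otimes_F \AA{F} = \AA{F}\cdot 1 \oplus \AA{F}\cdot v$, whose intersection is exactly $\AA{F}\cdot 1 = \AA{F}$.

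The only nontrivial obstacle is the middle step, but it reduces cleanly to the uniqueness clause of \cref{thm:polar-regular-quaternion-algebra}; the rest is flatness of $\AA{F}$ over $F$ and elementary linear algebra inside the $4$-dimensional $F$-space $D$.
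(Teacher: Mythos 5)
Your proof is correct, and it rests on the same two pillars as the paper's argument: the non-commutativity of $\alpha$ and $\beta$ (deduced, exactly as you do, from the uniqueness clause of \cref{thm:polar-regular-quaternion-algebra} together with $A \neq B$), and a dimension count in the $4$-dimensional $F$-algebra $D$. The organization differs slightly: the paper argues by contradiction with an arbitrary element $x$ of $C_D(\alpha) \cap C_D(\beta)$ not lying in $F$, observes that $\{1,x,\alpha,\beta\}$ is a linearly independent subset of $C_D(x)$ (independence forced by the non-commutativity of $\alpha$ and $\beta$), and concludes $C_D(x)=D$, hence $x$ is central; you instead identify each centralizer up front as a $2$-dimensional commutative subalgebra, $C_D(\alpha)=F[\alpha]$ and $C_D(\beta)=F[\beta]$, note they must be distinct, and conclude their intersection is $F$. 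Your route makes the structure of the two centralizers explicit and cleanly isolates the two ingredients (the subalgebra identification and the base-change/flatness step); the paper's is a touch more compact by avoiding the explicit identification. The paper likewise tensors up at the end (stating that base change commutes with centralizers), so your flatness discussion is just a more careful spelling-out of the same reduction, including the point that tensoring with a free module also commutes with finite intersections of subspaces.
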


\begin{proof}
We only show that $C_D(\beta) \cap C_D(\alpha) = F$. This suffices because taking the tensor product with $\AA{F}$ commutes with taking the centralizers.
So suppose to the contrary that there exists $x \in D$ with $x \notin F$, $\alpha x=x \alpha$, and $\beta x=x \beta$.
Note that $\alpha$ and $\beta$ do not commute with each other because otherwise, we would have $A = B$ by \cref{thm:polar-regular-quaternion-algebra}, in contradiction to our assumptions on $A$ and $B$. This implies $\alpha,\beta \notin F$, and so we see that $\{1,x,\alpha,\beta\}$ is a linearly independent subset with four distinct elements of the linear subspace $C_{D}(x) \subset D$. Since $\dim_F(D) = 4$, it follows that $C_D(x) = D$, and so we have $x \in Z(D) = F$. But this contradicts the assumption $x \notin F$.
\end{proof}

To simplify the notation, we will from now on write $C_{G}(g) := \{ h \in G : gh = hg\}$ for a group $G$ whenever multiplication of elements in $G$ with $g$ is defined, even if $g \not\in G$.

\begin{lemma} \label{thm:roots-of-unity-transitively}
The group $\mu_2(\OO{F}) := \{ \omega \in \OO{F} : \omega^2 = 1\}$ of second roots of unity in $\OO{F}$ acts transitively by $\omega \cdot (u,v) := (\omega^{-1} u, \omega v)$ on the fibers of the map
\[
C_{\ag{SL}_\Lambda(\OO{F})}(\beta) \times C_{\ag{SL}_\Lambda(\OO{F})}(\alpha) \to \ag{SL}_\Lambda(\OO{F}),
\quad
(u,v) \mapsto uv.
\]
\end{lemma}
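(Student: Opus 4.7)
The plan is to analyze when two pairs $(u,v)$ and $(u',v')$ lie in the same fibre of the multiplication map. The condition $uv = u'v'$ is equivalent to $u'^{-1}u = v'v^{-1}$, and I would set $w := u'^{-1}u = v'v^{-1}$, so that $u' = uw^{-1}$ and $v' = wv$. The key observation is that $w$ automatically lies in both centralizers: $w \in C_{\ag{SL}_\Lambda(\OO{F})}(\beta)$ because $u, u'$ do, and $w \in C_{\ag{SL}_\Lambda(\OO{F})}(\alpha)$ because $v, v'$ do. Once I show every such $w$ lies in $\mu_2(\OO{F})$, choosing $\nu := w$ will give
\[
\nu \cdot (u, v) = (\nu^{-1} u, \nu v) = (w^{-1} u, w v) = (u w^{-1}, wv) = (u', v'),
\]
where the middle equality uses that $\nu \in \OO{F}$ is central in $\OO{F} \otimes_{\O_F} \Lambda$. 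This establishes transitivity.

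Next I would identify the intersection. Viewing $\ag{SL}_\Lambda(\OO{F})$ as the group of reduced-norm-one elements of $\OO{F} \otimes_{\O_F} \Lambda \subset D \otimes_F \AA{F}$, the element $w$ lies in the intersection of centralizers of $\alpha$ and $\beta$ inside $D \otimes_F \AA{F}$, which by \cref{thm:centralizer-intersection} is equal to $\AA{F}$. Since $\Lambda$ is the $\O_F$-span of the quaternionic basis $\{1, i, j, k\}$, the subring $\OO{F} \otimes_{\O_F} \Lambda \subset D \otimes_F \AA{F}$ is free over $\OO{F}$ with the same basis, so its intersection with $\AA{F} \cdot 1$ is precisely $\OO{F} \cdot 1$. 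Hence $w = \nu \cdot 1$ for some $\nu \in \OO{F}$, and the reduced norm $N(w) = \nu^2$ must equal one by definition of $\ag{SL}_\Lambda$. Therefore $\nu \in \mu_2(\OO{F})$, as required.

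The only point that needs care is a small well-definedness check for the action: if $\nu \in \mu_2(\OO{F})$ then $\nu^{-1} = \nu$ is central with reduced norm $\nu^2 = 1$, so it lies in $\ag{SL}_\Lambda(\OO{F})$ and in both centralizers, and $(\nu^{-1}u)(\nu v) = uv$ by centrality, so the action genuinely preserves fibres. I do not expect a significant obstacle here: the main technical ingredient (the centralizer intersection being $\AA{F}$) is already provided by \cref{thm:centralizer-intersection}, and the remaining work is just tracking scalars through the chain $\AA{F} \cap (\OO{F} \otimes_{\O_F} \Lambda) = \OO{F}$ and imposing reduced norm one.
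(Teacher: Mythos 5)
Your proof is correct and follows essentially the same route as the paper: set $\nu$ (your $w$) to be the ratio $u'^{-1}u = v'v^{-1}$, observe it lies in both centralizers, apply \cref{thm:centralizer-intersection} to deduce it is a scalar, and use the reduced-norm-one condition to get $\nu^2 = 1$. The one place you are more careful than the paper's terse statement is in justifying that the scalar lies in $\OO{F}$ rather than just $\AA{F}$, via the freeness of $\OO{F} \otimes_{\O_F} \Lambda$ over the quaternionic basis; this is a legitimate detail the paper elides.
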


\begin{proof}
Let $(u_1,v_1),(u_2,v_2) \in C_{\ag{SL}_\Lambda(\OO{F})}(\beta) \times C_{\ag{SL}_\Lambda(\OO{F})}(\alpha)$ with $u_1v_1 = u_2v_2$. Then $\omega := u_2^{-1}u_1 = v_2v_1^{-1}$ commutes with both $\alpha$ and $\beta$, and so $\omega$ is a scalar in $\OO{F}$ by \cref{thm:centralizer-intersection}. Since $N(\omega) = 1$ and $N(\omega) = \omega^2$, we have $\omega \in \mu_2(\OO{F})$. It follows that $\omega^{-1} u_1 = u_1 \omega^{-1} = u_2$ and $\omega v_1 = v_2$. Hence we have $\omega \cdot (u_1,v_1) = (u_2,v_2)$.
\end{proof}

Recall from \cref{thm:polar-regular-quaternion-algebra} that $C_{D^\times}(\alpha)$ and $C_{D^\times}(\beta)$ act by orientation-preserving isometries on the flats $A$ and $B$, respectively. The next two propositions will, combined with \cref{thm:surjective-map}, allow us to control the intersection of the images of $A$ and $B$ in a finite covering space of $\Gamma \backslash (\H^2)^r$.

\begin{proposition} \label{thm:product-formula-closure}
For every $\gamma \in \Lambda^1$ that is in the closure of $C_{\Lambda^1}(\beta)C_{\Lambda^1}(\alpha)$ in $\ag{SL}_\Lambda(\OO{F})$, there exist $x \in C_{D^\times}(\beta)$ and $y \in C_{D^\times}(\alpha)$ such that $\gamma = xy$ and such that $x$ acts by orientation-preserving isometries on $(\H^2)^r$.
\end{proposition}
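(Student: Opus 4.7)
The plan is to extract adelic limits from the closure hypothesis, use the $F$-algebra structure of $D$ to descend these limits to a factorization over $F$, and finally deduce the orientation condition from a positivity argument on the reduced norm.

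Since $\gamma$ lies in the closure of $C_{\Lambda^1}(\beta) C_{\Lambda^1}(\alpha)$ in the compact group $\ag{SL}_\Lambda(\OO{F})$, I pick a sequence $(u_n v_n)_{n \in \N}$ with $u_n \in C_{\Lambda^1}(\beta)$ and $v_n \in C_{\Lambda^1}(\alpha)$ converging to $\gamma$, and after passing to a subsequence obtain limits $u_n \to \tilde u$ and $v_n \to \tilde v$ with $\tilde u \tilde v = \gamma$. Because the centralizer conditions are closed, $\tilde u$ belongs to the norm-one elements of $F[\beta] \otimes_F \AA{F}$ and $\tilde v$ to those of $F[\alpha] \otimes_F \AA{F}$.

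Next I note that $\alpha$ and $\beta$ do not commute (otherwise $A = B$ by \cref{thm:polar-regular-quaternion-algebra}, contrary to the standing assumption on $A$ and $B$) and that neither lies in $F$, so $\{1,\alpha,\beta,\beta\alpha\}$ is an $F$-basis of $D$ and $D = F[\beta] \oplus F[\beta]\alpha$ as $F$-vector spaces. Expanding $\tilde v = d_1 + d_2\alpha$ with $d_1, d_2 \in \AA{F}$ and $\gamma = x_0 + x_1\alpha$ with $x_0, x_1 \in F[\beta]$, the identity $\tilde u \tilde v = d_1 \tilde u + d_2 \tilde u \alpha$ forces $x_0 = d_1 \tilde u$ and $x_1 = d_2 \tilde u$ in $F[\beta] \otimes \AA{F}$. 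When $x_1 \neq 0$, the ratio $x_0 x_1^{-1} \in F[\beta]$ coincides with $d_1 d_2^{-1} \in \AA{F}$, so it lies in the intersection $F[\beta] \cap \AA{F} = F$. Setting $\mu := x_0 x_1^{-1} \in F$ yields the factorization $\gamma = x_1(\mu + \alpha) = xy$ with $x := x_1 \in F[\beta]^\times = C_{D^\times}(\beta)$ and $y := \mu + \alpha \in F[\alpha]^\times = C_{D^\times}(\alpha)$; the degenerate case $x_1 = 0$ reduces to $\gamma \in F[\beta]^\times$ and is handled by $x := \gamma$, $y := 1$.

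The hard part is verifying $\sigma_i(N(x)) > 0$ for $i = 1, \ldots, r$. Applying the same descent to each $\gamma_n = u_n v_n$ produces $x_n \in F[\beta]^\times$ together with a scalar $\lambda_n \in F^\times$ such that $u_n = \lambda_n x_n$, and the norm-one relation $N(u_n) = 1$ then forces $N(x_n) = \lambda_n^{-2} \in (F^\times)^2$. Passing to the adelic limit gives $N(x) = d_2^2$, so $N(x) \in F^\times$ is a square at every finite place. Furthermore, at each non-split real place $\sigma_j$ of $D$ with $j > r$, the subalgebra $F[\beta] \otimes_{\sigma_j} \R$ embeds into $\Hamilton$ as a two-dimensional $\R$-subalgebra and is therefore isomorphic to $\C$, which forces $\sigma_j(N(x)) > 0$ automatically. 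The missing positivity at the split real places $\sigma_1,\ldots,\sigma_r$ would then follow from a Hasse-type local-global principle for squares in $F^\times$, since the data already pin down $N(x)$ as a local square at every place except possibly at $\sigma_1,\ldots,\sigma_r$; this Hasse step is the delicate point, and requires a careful argument to exclude the potential sign obstruction coming from the units of $F$ that are locally square at every finite place but not globally.
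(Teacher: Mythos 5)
Your approach is genuinely different from the paper's: you try to descend the adelic factorization explicitly via the decomposition $D = F[\beta] \oplus F[\beta]\alpha$ and then deduce the orientation condition from a local--global principle for squares, rather than solving a linear system over $F$ and running the paper's delicate $\mu_2$-action and Chevalley congruence-subgroup argument (the paper's Steps 2--4). Unfortunately there is a real gap at the very first structural step.

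You assert that because $\alpha,\beta \notin F$ and $\alpha\beta \neq \beta\alpha$, the set $\{1,\alpha,\beta,\beta\alpha\}$ is an $F$-basis of $D$ and hence $D = F[\beta] \oplus F[\beta]\alpha$. This is false in general, because a quaternion algebra can contain $3$-dimensional non-commutative subalgebras when it is split. Concretely, take $D = M_2(F)$, $\beta = \mathrm{diag}(1,2)$ and $\alpha = \begin{pmatrix}2 & 0\\ 1 & 1/2\end{pmatrix} \in D^1$; both have two distinct rational (hence real) eigenvalues, they do not commute, yet $F[\alpha,\beta]$ is the $3$-dimensional algebra of lower-triangular matrices, and $F[\beta] \cap F[\beta]\alpha$ is $1$-dimensional. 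The decomposition fails exactly when $\tau_1(\alpha)$ and $\tau_1(\beta)$ share an eigenvector, i.e.\ when the associated geodesics are asymptotic; this is compatible with the standing hypothesis ``$A$ and $B$ are disjoint or intersect transversally in a single point,'' so it is not ruled out. The paper's Step~2 sidesteps this by solving the linear system $x\alpha = (\gamma\alpha\gamma^{-1})x$, $x\beta=\beta x$ over $F$ and invoking \cref{thm:centralizer-intersection}, which needs only that $\alpha,\beta$ fail to commute, not that they generate $D$. Without that, $x_1^{-1}$ need not exist in $F[\beta]$ and the whole factorization $\gamma = x_1(\mu+\alpha)$ breaks down.

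On the positivity step, you are in better shape than you fear. What you actually obtain is $N(x) = d_2^2$ with $d_2 \in \AA{F}$, i.e.\ $N(x) \in F^\times$ is a local square at every finite place. By the Chebotarev density theorem applied to $F(\sqrt{N(x)})/F$, an element of $F^\times$ that is a local square at all but finitely many places is already a square in $F^\times$; in particular it is totally positive. There is no Grunwald--Wang obstruction for exponent $2$, and the ``units that are locally square everywhere finitely but not globally'' you worry about do not exist. So this part of your argument is sound once you cite Chebotarev; the irreparable issue is the unjustified direct-sum decomposition, which excludes the asymptotic configurations the paper's proof covers.
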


\begin{proof}
\textbf{Step 1:} We first find $x' \in C_{\ag{SL}_\Lambda(\OO{F})}(\beta)$ and $y' \in C_{\ag{SL}_\Lambda(\OO{F})}(\alpha)$ such that $\gamma = x'y'$.
This is possible because $C_{\ag{SL}_\Lambda(\OO{F})}(\beta)$ and $C_{\ag{SL}_\Lambda(\OO{F})}(\alpha)$ are both closed in $\ag{SL}_\Lambda(\OO{F})$, so their product is also closed and contains the set $C_{\Lambda^1}(\beta)C_{\Lambda^1}(\alpha)$, hence also the closure point $\gamma$ of this set.

\textbf{Step 2:}  Next, we find some $c \in \AA{F}$ with $cx' \in D^\times$.
To achieve this, we observe that
$x'$ is a solution in $D \otimes_F \AA{F}$ of the homogeneous system of linear equations
\[
\begin{aligned}
x' \alpha =&\; (\gamma \alpha \gamma^{-1}) x', \\
x' \beta =&\; \beta x'.
\end{aligned}
\]
The coefficients of this system are in $F$. Let $\mathcal{B}$ be an $F$-basis for the space of solutions of this system in $D$. Then the solution space in $D \otimes_F \AA{F}$ is the $\AA{F}$-span of $\mathcal{B}$. In particular, $\mathcal{B} \neq \emptyset$. Moreover, the function $x \mapsto N(x)$ on the solution space in $D$ can be expressed in coordinates with respect to $\mathcal{B}$ by some multivariate polynomial $P \in F[X_1,\ldots,X_m]$. Because of $N(x') \neq 0$, we have $P \neq 0$. So since $F$ is an infinite field, there exists a solution with nonzero reduced norm in $D$, that is, there exists an element $x \in D^\times$ satisfying
\begin{align*}
x \alpha =&\; (\gamma \alpha \gamma^{-1}) x, \\
x \beta =&\; \beta x.
\end{align*}
The element $x^{-1}x' \in D \otimes_F \AA{F}$ commutes with $\beta$. It also commutes with $\alpha$ because from the above two linear systems of equations, we deduce that
\[
x^{-1}x'\alpha = x^{-1} (\gamma \alpha \gamma^{-1}) x' = x^{-1} (x \alpha x^{-1}) x' = \alpha x^{-1} x'.
\]
So by \cref{thm:centralizer-intersection}, we have $x = cx' \in D^\times$ for some $c \in \AA{F}$ as required.

Let $y :=c^{-1}y'$. Then $y = c^{-1}(x')^{-1}\gamma = x^{-1}\gamma \in D^\times$, and so we have $\gamma = xy$ with $x \in C_{D^\times}(\beta)$ and $y \in C_{D^\times}(\alpha)$. It remains to show that $x$ acts by orientation-preserving isometries on $(\H^2)^r$. We do this in the next two steps.

\textbf{Step 3:} We now show that $C_{\Lambda^1}(\beta)x' \cap \mu_2(\OO{F}) U \neq \emptyset$ for every open neighborhood of the identity $U \subset \ag{SL}_\Lambda(\OO{F})$.
Assume to the contrary that $U \subset \ag{SL}_\Lambda(\OO{F})$ is an open neighborhood of the identity with $C_{\Lambda^1}(\beta)x' \cap \mu_2(\OO{F}) U = \emptyset$. Then we have
\begin{equation*}
\bigl(C_{\Lambda^1}(\beta) x' \times y'C_{\Lambda^1}(\alpha)\bigr)
\cap
\bigl(\mu_2(\OO{F}) U \times C_{\ag{SL}_\Lambda(\OO{F})}(\alpha)\bigr) = \emptyset.
\end{equation*}
The set $\mu_2(\OO{F}) U \times C_{\ag{SL}_\Lambda(\OO{F})}(\alpha)$ is invariant under the action of $\mu_2(\OO{F})$ defined in  \cref{thm:roots-of-unity-transitively} and $\mu_2(\OO{F})$ acts transitively on the fibers of the multiplication map by this lemma. Hence, for the images under this map, we obtain
\begin{equation} \label{eq:doublecoset-intersection-pre}
C_{\Lambda^1}(\beta) \gamma C_{\Lambda^1}(\alpha) \cap \mu_2(\OO{F}) U C_{\ag{SL}_\Lambda(\OO{F})}(\alpha) = \emptyset.
\end{equation}
Since $U$ is an open neighborhood of the identity in $\ag{SL}_\Lambda(\OO{F})$, there exists by \cref{thm:adelic-points-basis} a nonzero ideal $\a \subset \O_F$ with $\ag{SL}_\Lambda(\OO{F})(\a) \subset U$. Consequently, we have $\Lambda^1(\a) \subset U$ and so \eqref{eq:doublecoset-intersection-pre} implies that
\begin{equation} \label{eq:doublecoset-intersection}
C_{\Lambda^1}(\beta) \gamma C_{\Lambda^1}(\alpha) \cap \Lambda^1(\a) = \emptyset.
\end{equation}
On the other hand, $\gamma$ is in the closure of $C_{\Lambda^1}(\beta)C_{\Lambda^1}(\alpha)$ and so we have $C_{\Lambda^1}(\beta)C_{\Lambda^1}(\alpha) \cap \Lambda^1(\a)\gamma \neq \emptyset$. Hence, there exist $\gamma_\beta \in C_{\Lambda^1}(\beta)$, $\gamma_\alpha \in C_{\Lambda^1}(\alpha)$ and $\gamma_0 \in \Lambda^1(\a)$ with $\gamma_\beta\gamma_\alpha = \gamma_0 \gamma$.
Since $\Lambda^1(\a)$ is normal in $\Lambda^1$, we obtain
\[
\gamma_\beta^{-1}\gamma_0^{-1}\gamma_\beta = \gamma_\beta^{-1} \gamma \gamma_\alpha^{-1}
\in C_{\Lambda^1}(\beta) \gamma C_{\Lambda^1}(\alpha) \cap \Lambda^1(\a)
\]
in contradiction to \eqref{eq:doublecoset-intersection}. This proves the claim and thus finishes this step.

\textbf{Step 4:}  Finally, we show that $x = cx'$ acts by orientation-preserving isometries on $(\H^2)^r$.
Assume to the contrary that this is not the case. Then there must exist some $i \in \{1,\ldots,r\}$ with $\det(\tau_i(x)) = \sigma_i(N(x)) = \sigma_i(c^2) < 0$. Let $E := F(\sqrt{a})$. We can extend $\sigma_i$ to a real embedding $\widetilde{\sigma_i} \colon E \hookrightarrow \R$ as in the proof of \cref{thm:quaternion-algebra-trick} because by our assumptions we have $\sigma_i(a) > 0$.

Recall that $\tau_1(D^\times) \subset M_2(E)$. So $\tau_1$ induces an $F$-algebra homomorphism $D \to M_2(E)$ and hence also an $F$-homomorphism $\ag{GL}_D \to \Res_{E/F}\ag{GL}_2$. By \cite[p.~15]{Platonov}, we have an isomorphism $(\Res_{E/F}\ag{GL}_2)(\AA{F}) \cong \ag{GL}_2(\AA{E})$, and so we get a continuous group homomorphism
\[
\Phi \colon \ag{GL}_D(\AA{F}) \to \ag{GL}_2(\AA{E})
\]
that extends $\tau_1$ on $D^\times$.
The matrix $\Phi(\beta) = \tau_1(\beta) \in \GL_2(E)$ is by assumption diagonalizable over $E$ with two distinct eigenvalues. So there exists a one-dimensional subspace $L \subset E^2$ which is invariant under $\tau_1(\beta)$. The corresponding eigenspace in $(\AA{E})^2$ of $\tau_1(\beta)$ is $\AA{E}\cdot L$, and so every matrix in $M_2(\AA{E})$ that commutes with $\tau_1(\beta)$ stabilizes $\AA{E}\cdot L$. Let now $v \in L$ be a nonzero vector and let $\ell \in \{1,2\}$ be such that the $\ell$th coordinate of $v$ is $v_\ell \neq 0$. Consider the map
\begin{equation} \label{eq:definition-phi}
s \colon C_{\ag{GL}_D(\AA{F})}(\beta) \to \ag{GL}_1(\AA{E}),
\quad
g \mapsto \biggl(\frac{(\Phi(g)v)_\ell}{v_\ell}\biggr)^2.
\end{equation}
Note that $s$ is multiplicative and so its image is contained in $\ag{GL}_1(\AA{E}) = \AA{E}^\times$. Moreover, $s$ is continuous because $\Phi$ is continuous and $\AA{E}$ is a topological $E$-algebra. We have $s(C_{D^\times}(\beta)) \subset (E^\times)^2$, and so by writing $x' = c^{-1}cx'$, we see that
\[
s(\mu_2(\OO{F}) C_{\Lambda^1}(\beta)x') \subset (E^\times)^2 c^{-2} s(cx')
\]
is contained in $E^\times$ and has only negative images under $\widetilde{\sigma_i}$ because of $\widetilde{\sigma_i}(c^2) < 0$.
Note that $V_+ := \{ v \in \O_E^\times: \widetilde{\sigma_i}(v) > 0 \}$ is a subgroup of finite index in $\O_E^\times$.
So by \cref{thm:chevalley}, there exists a nonzero ideal $\a \subset \O_E$ with $\O_E^\times(\a) \subset V_+$.
Hence $\OO{E}^\times(\a) \cap E^\times \subset \O_E^\times(\a) \subset V_+$, and so we obtain
\[
\mu_2(\OO{F}) C_{\Lambda^1}(\beta)x' \cap s^{-1}(\OO{E}^\times(\a)) = \emptyset.
\]
Since $s$ is continuous, the preimage $s^{-1}(\OO{E}^\times(\a))$ is an open neighborhood of the identity in $C_{\ag{GL}_D(\AA{F})}(\beta)$, and so there exists a nonzero ideal $\b \subset \O_F$ with $C_{\ag{GL}_\Lambda(\OO{F})}(\beta)(\b) \subset s^{-1}(\OO{E}^\times(\a))$.
Let $U := \ag{SL}_\Lambda(\OO{F})(\b)$. Then we have $\mu_2(\OO{F}) C_{\Lambda^1}(\beta)x' \cap U = \emptyset$, or equivalently,
\[
C_{\Lambda^1}(\beta)x' \cap \mu_2(\OO{F}) U = \emptyset.
\]
This contradicts the result from the previous step. So $x$ must act by orientation-preserving isometries on $(\H^2)^r$ and the proof is complete.
\end{proof}

\begin{proposition} \label{thm:product-formula-corollary}
There exists a subgroup of finite index $\Gamma_{\product} \subset \Gamma_{\emb}$ such that every $\gamma \in \Gamma_{\product}$ with $\gamma A \cap B \neq \emptyset$ can be written as $\gamma =xy$ with $x \in C_{D^\times}(\beta)$ and $y \in C_{D^\times}(\alpha)$ so that $x$ acts by orientation-preserving isometries on $(\H^2)^r$.
\end{proposition}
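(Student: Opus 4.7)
The plan is to reduce the claim to \cref{thm:product-formula-closure} by choosing $\Gamma_\product$ so that any $\gamma \in \Gamma_\product$ with $\gamma A \cap B \neq \emptyset$ lies in the closure $\mathcal{C} := \overline{C_{\Lambda^1}(\beta) C_{\Lambda^1}(\alpha)}$ inside $\ag{SL}_\Lambda(\OO{F})$. Applying \cref{thm:product-formula-closure} to such $\gamma$ then yields the required factorization.

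The first step is to cover $\{\gamma \in \Gamma_\emb : \gamma A \cap B \neq \emptyset\}$ by finitely many sets of the form $C_{\Lambda^1}(\beta) \gamma_i C_{\Lambda^1}(\alpha)$. The finiteness result in \cref{thm:finiteness}, combined with \cref{thm:technical-double-cosets}, decomposes this set into finitely many double cosets $\Gamma_{\emb, B} \gamma_i \Gamma_{\emb, A}$ for representatives $\gamma_1, \ldots, \gamma_m \in \Gamma_\emb$. Since $\Gamma_\emb \subset \Gamma_\cent$, \cref{thm:centralizer} gives $\Gamma_{\emb, A} \subset C_{\Lambda^1}(\alpha)$ and $\Gamma_{\emb, B} \subset C_{\Lambda^1}(\beta)$, so each double coset is contained in $C_{\Lambda^1}(\beta) \gamma_i C_{\Lambda^1}(\alpha)$.

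The key structural observation is that $\mathcal{C}$ is stable under left multiplication by $C_{\Lambda^1}(\beta)$ and right multiplication by $C_{\Lambda^1}(\alpha)$. Hence $C_{\Lambda^1}(\beta) \gamma_i C_{\Lambda^1}(\alpha)$ is either contained in $\mathcal{C}$ (the \emph{good} case, $\gamma_i \in \mathcal{C}$) or entirely disjoint from $\mathcal{C}$ (the \emph{bad} case). In the bad case, the closedness of $\mathcal{C}$ together with the neighborhood basis of the identity in $\ag{SL}_\Lambda(\OO{F})$ given by the normal subgroups $\ag{SL}_\Lambda(\OO{F})(\a)$ (\cref{thm:adelic-points-basis}) produces a nonzero ideal $\a_i \subset \O_F$ with $\gamma_i \notin C_{\Lambda^1}(\beta) C_{\Lambda^1}(\alpha) \cdot \ag{SL}_\Lambda(\OO{F})(\a_i)$.

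Finally, take $\a$ to be the intersection of the finitely many $\a_i$ coming from the bad $\gamma_i$ and set $\Gamma_\product := \Gamma_\emb(\a)$. For $\gamma \in \Gamma_\product$ with $\gamma A \cap B \neq \emptyset$, some $\gamma_i$ satisfies $\gamma \in C_{\Lambda^1}(\beta) \gamma_i C_{\Lambda^1}(\alpha)$; writing $\gamma = c_\beta \gamma_i c_\alpha$ gives $\gamma_i = c_\beta^{-1} \gamma c_\alpha^{-1} \in C_{\Lambda^1}(\beta) \ag{SL}_\Lambda(\OO{F})(\a) C_{\Lambda^1}(\alpha) = C_{\Lambda^1}(\beta) C_{\Lambda^1}(\alpha) \ag{SL}_\Lambda(\OO{F})(\a)$ by normality of the congruence subgroup, which forces $\gamma_i$ to be good. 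Hence $\gamma \in \mathcal{C}$ as required. The only real subtlety is the interplay between the double-coset decomposition and the congruence topology on $\ag{SL}_\Lambda(\OO{F})$; everything else is direct bookkeeping.
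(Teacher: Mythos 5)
Your proof is correct and follows essentially the same strategy as the paper: decompose $\{\gamma \in \Gamma_\emb : \gamma A \cap B \neq \emptyset\}$ into finitely many double cosets via \cref{thm:finiteness} and \cref{thm:technical-double-cosets}, separate the ``bad'' representatives from the closure $\mathcal{C}$ by congruence subgroups using \cref{thm:adelic-points-basis}, intersect to get $\Gamma_\product$, and conclude via \cref{thm:product-formula-closure}. The one place you diverge slightly is a clean small optimization: by noting that $\mathcal{C}$ is invariant under left translation by $C_{\Lambda^1}(\beta)$ and right translation by $C_{\Lambda^1}(\alpha)$, you get that $\gamma$ itself lies in $\mathcal{C}$ and can apply \cref{thm:product-formula-closure} directly to $\gamma$, whereas the paper applies that lemma to the representative $\gamma_i$, writes $\gamma_i = x_i y_i$, and then manually rebuilds the factorization of $\gamma = \delta \gamma_i \tau^{-1}$ as $(\delta x_i)(y_i \tau^{-1})$, verifying $N(\delta)=1$ to keep the orientation condition. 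Both reach the same conclusion by the same underlying mechanism.
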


\begin{proof}
By \cref{thm:finiteness}, there exist $\gamma_1,\ldots,\gamma_m \in \Gamma_{\emb}$ with
\[
\bigl\{ (\Gamma_{\emb})_B\gamma A : \gamma \in \Gamma_{\emb},\, \gamma A \cap B \neq \emptyset \bigr\}
=
\bigl\{
(\Gamma_{\emb})_B\gamma_1 A,
\ldots,
(\Gamma_{\emb})_B\gamma_m A
\bigr\}.
\]
We now choose for every $i \in \{1,\ldots,m\}$ a subgroup $\Gamma_{(i)} \subset \Gamma_{\emb}$ as follows:
If $\gamma_i$ is in the closure of $C_{\Lambda^1}(\beta)C_{\Lambda^1}(\alpha)$ in $\ag{SL}_\Lambda(\OO{F})$, then we set $\Gamma_{(i)} := \Gamma_\emb$. Otherwise, there exists by \cref{thm:adelic-points-basis} a nonzero ideal $\a_i \subset \O_F$ with $C_{\Lambda^1}(\beta)C_{\Lambda^1}(\alpha) \cap \ag{SL}_\Lambda(\OO{F})(\a_i)\gamma_i = \emptyset$ and we set $\Gamma_{(i)} := \Gamma_\emb(\a_i)$.
Consider
\[
\Gamma_{\product} := \Gamma_{(1)} \cap \ldots \cap \Gamma_{(m)}.
\]
Let $\gamma \in \Gamma_{\product}$ with $\gamma A \cap B \neq \emptyset$. Then we have $(\Gamma_{\emb})_B \gamma A = (\Gamma_{\emb})_B \gamma_i A$ for some $i \in \{1,\ldots,m\}$. So by \cref{thm:technical-double-cosets}, there exists some $\delta \in (\Gamma_{\emb})_B$ with $\gamma A = \delta \gamma_i A$.
Hence we have $\gamma^{-1} \delta \gamma_i A = A$.
Let $\tau := \gamma^{-1} \delta \gamma_i$. Then $\tau \in (\Gamma_{\emb})_A$ and
\begin{equation} \label{eq:conjugate-equation}
\delta^{-1}\tau = (\delta^{-1}\gamma^{-1}\delta)\gamma_i.
\end{equation}
Since $\Gamma_{\product}$ is normal in $\Gamma_{\emb}$, we have $\delta^{-1}\gamma^{-1}\delta \in \Gamma_{\product}$. Moreover, we have $\delta^{-1} \in (\Gamma_{\emb})_B \subset C_{\Lambda^1}(\beta)$ and $\tau \in (\Gamma_{\emb})_A \subset C_{\Lambda^1}(\alpha)$. Hence, \eqref{eq:conjugate-equation} shows that
\[
C_{\Lambda^1}(\beta)C_{\Lambda^1}(\alpha) \cap \Gamma_{\product}\gamma_i \neq \emptyset.
\]
Now because of $\Gamma_{\product} \subset \Gamma_{(i)}$, we have that $\gamma_i$ must be in the closure of $C_{\Lambda^1}(\beta)C_{\Lambda^1}(\alpha)$ in $\ag{SL}_\Lambda(\OO{F})$. So by \cref{thm:product-formula-closure}, we can write $\gamma_i = x_iy_i$ with $x_i \in C_{D^\times}(\beta)$ and $y_i \in C_{D^\times}(\alpha)$ such that $x_i$ acts by orientation-preserving isometries on $(\H^2)^r$.
Observe that
\[
\gamma = \delta\gamma_i\tau^{-1} = \delta x_i y_i \tau^{-1}.
\]
So we have $\gamma = xy$ for $x := \delta x_i \in C_{D^\times}(\beta)$ and $y := y_i \tau^{-1} \in C_{D^\times}(\alpha)$, and $x$ acts by orientation-preserving isometries on $(\H^2)^r$ because of $N(\delta) = 1$.
\end{proof}

We can now show the following result about the intersection of the images of $A$ and $B$ in a finite covering space of the locally symmetric space $\Gamma \backslash (\H^2)^r$:

\begin{proposition} \label{thm:intersection}
Let $\Gamma' \subset \Gamma_{\product}$ be a subgroup of finite index.
Then the images $\mathcal{F}$ of $A$ and $\mathcal{G}$ of $B$ in $\Gamma' \backslash (\H^2)^r$ intersect if and only if $A$ and $B$ intersect in $(\H^2)^r$. Further, the intersection $\mathcal{F} \cap \mathcal{G}$ is transverse and consists of finitely many points, all of which have the same intersection number.
\end{proposition}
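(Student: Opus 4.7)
The plan is to feed the finite list of representatives from \cref{thm:surjective-map} into the factorization guaranteed by \cref{thm:product-formula-corollary}. The key identity I want to exploit is that if $\gamma \in \Gamma' \subset \Gamma_{\product}$ satisfies $\gamma A \cap B \neq \emptyset$ and $\gamma = xy$ with $x \in C_{D^\times}(\beta)$, $y \in C_{D^\times}(\alpha)$, then by \cref{thm:polar-regular-quaternion-algebra} we have $yA = A$ and $xB = B$, so
\[
\gamma A \cap B \;=\; xyA \cap B \;=\; xA \cap xB \;=\; x(A \cap B).
\]
This immediately gives the ``only if'' direction of the first assertion; the ``if'' direction is trivial, since any point of $A \cap B$ projects into $\mathcal{F} \cap \mathcal{G}$.

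For finiteness and transversality I proceed as follows. \cref{thm:surjective-map} realizes $\mathcal{F} \cap \mathcal{G}$ as the projection of $\bigcup_{i=1}^m \gamma_i A \cap B$, and by the identity above each term is either empty or the single point $x_i p$ with $p$ the unique intersection of $A$ and $B$; hence $\mathcal{F} \cap \mathcal{G}$ is finite. At such a preimage $q = x_i p$, the projection $(\H^2)^r \to \Gamma' \backslash (\H^2)^r$ is a local diffeomorphism that, using \cref{thm:embedding,thm:technical-disjoint-union}, identifies $\mathcal{F}$ and $\mathcal{G}$ near $z := \Gamma' q$ with $\gamma_i A = x_i A$ and $B$; so transversality at $z$ is transported by the ambient isometry $x_i$ from the transversality of $A$ and $B$ at $p$, which holds by hypothesis.

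For the intersection numbers I plan to show that $x_i^{-1}$ carries all orientation data at $q$ to that at $p$ unchanged. By \cref{rem:induced-orientations} the pulled-back orientation of $\mathcal{F}$ near $q$ is $(\gamma_i A)^+ = \gamma_i A^+$, which equals $x_i A^+$ because $y_i \in C_{D^\times}(\alpha)$ acts by orientation-preserving isometries on $A$ (\cref{thm:polar-regular-quaternion-algebra}); the pulled-back orientation of $\mathcal{G}$ is $B^+$. Now $x_i^{-1}$ is an orientation-preserving isometry of the ambient $(\H^2)^r$ by \cref{thm:product-formula-corollary}, and it also preserves $B^+$ since $x_i \in C_{D^\times}(\beta)$ acts by orientation-preserving isometries on $B$. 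Hence $x_i^{-1}$ maps the configuration $(x_i A^+, B^+, q)$ to $(A^+, B^+, p)$ without flipping any orientation, so the local intersection number at $z$ equals that at $p$, independently of $i$.

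The main obstacle will be precisely this orientation bookkeeping: the factorization $\gamma = xy$ must be compatible with the orientations of $A$, of $B$, and of the ambient $(\H^2)^r$ simultaneously. The first two are automatic from \cref{thm:polar-regular-quaternion-algebra}, but the third — that $x$ preserves the ambient orientation — is exactly the subtle additional conclusion of \cref{thm:product-formula-corollary}, without which different preimages could contribute with opposite signs and the constancy of the intersection number could fail.
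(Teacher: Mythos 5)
Your proposal is correct and follows essentially the same route as the paper: reduce $\mathcal{F}\cap\mathcal{G}$ to the finitely many $\gamma_i A \cap B$ via \cref{thm:surjective-map}, factor $\gamma_i = x_i y_i$ via \cref{thm:product-formula-corollary}, use $y_i A = A$ and $x_i B = B$ to get $\gamma_i A \cap B = x_i(A\cap B)$, and then transport transversality and the local intersection number by the ambient orientation-preserving isometry $x_i$ together with the orientation-preserving action on $A$ and $B$ from \cref{thm:polar-regular-quaternion-algebra} and \cref{rem:induced-orientations}. Your concluding remark correctly pinpoints the essential role of the ambient-orientation conclusion in \cref{thm:product-formula-corollary}, which is exactly what makes the signs at the different preimages agree.
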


\begin{proof}
We denote as above by $\mathcal{F}$ and $\mathcal{G}$ the images of $A$ and $B$ in $\Gamma' \backslash (\H^2)^r$, respectively.
By \cref{thm:surjective-map}, there exist $\gamma_1,\ldots,\gamma_m \in \Gamma'$ such that the projection map induces a surjection
\[
\bigcup_{i=1}^m \gamma_i A \cap B \twoheadrightarrow \mathcal{F} \cap \mathcal{G}.
\]
Because of $\Gamma' \subset \Gamma_\product$, we can apply \cref{thm:product-formula-corollary} and write each $\gamma_i$ as $\gamma_i = x_iy_i$ for some $x_i \in C_{D^\times}(\beta)$ and $y_i \in C_{D^\times}(\alpha)$ such that $x_i$ acts by orientation-preserving isometries on $(\H^2)^r$.
Now choose orientations $A^+$ on $A$ and $B^+$ on $B$, and let $\mathcal{F}$ and $\mathcal{G}$ carry the induced orientations as described in \cref{rem:induced-orientations}. By \cref{thm:polar-regular-quaternion-algebra}, we have $x_i \cdot B^+ = B^+$ and $y_i \cdot A^+ = A^+$, and so we obtain
\begin{align*}
\gamma_i \cdot A^+ \cap B^+ = x_i \cdot A^+ \cap B^+ = x_i \cdot (A^+ \cap x_i^{-1} \cdot B^+)  = x_i \cdot (A^+ \cap B^+).
\end{align*}
This shows that $\mathcal{F}$ and $\mathcal{G}$ intersect if and only if $A$ and $B$ intersect.
Furthermore, we see that in this case the intersection of $\mathcal{F}$ and $\mathcal{G}$ is transverse and the intersection number is the same in each point of intersection because for each $i \in \{1,\ldots,m\}$, the action of $x_i$ maps the intersection $A^+ \cap B^+$ to the intersection $\gamma_iA^+ \cap B^+$ while also preserving the orientation of the ambient space $(\H^2)^r$.
\end{proof}

\subsection{Finishing the proof of the main theorem} \label{subsec:finishing}

\noindent We now use \cref{thm:intersection} from the previous subsection to show that the flats that we have constructed in \cref{thm:configuration} give us a family of linearly independent real homology classes.
For this, we will use some facts about de Rham cohomology from \cite{Bott-Tu}.

Let $M$ be a smooth oriented $n$-manifold. We write $H_{\dR}^*(M)$ for the de Rham cohomology groups of $M$ and $H_{\dRc}^*(M)$ for the de Rham cohomology groups of $M$ with compact support.

Let $S \subset M$ be a closed oriented $k$-submanifold. We denote by $i \colon S \hookrightarrow M$ the inclusion map. The \term{closed Poincaré dual of $S$} is the unique cohomology class $\eta_S \in H_{\dR}^{n-k}(M)$ so that for all $\omega \in H_{\dRc}^k(M)$, we have
\begin{equation} \label{eq:poincare-dual}
\int_M \omega \wedge \eta_S = \int_S i^*w.
\end{equation}
If additionally $S$ is compact and $M$ is diffeomorphic to the interior of a compact manifold with boundary, then we can also define the \term{compact Poincaré dual of $S$}. This is the unique cohomology class $\eta'_S \in H_{\dRc}^{n-k}(M)$ for which \eqref{eq:poincare-dual} holds with $\eta'_S$ instead of $\eta_S$ and for all $\omega \in H_{\dR}^k(M)$.
Note that the natural map $H_{\dRc}^{n-k}(M) \to H_{\dR}^{n-k}(M)$ sends $\eta'_S$ to $\eta_S$ (see \cite[p.~51]{Bott-Tu}), and that  $\eta'_S$ coincides with the image of the real fundamental class of $S$ under the composition 
\[
H_k(S;\R) \to H_k(M;\R) \congrightarrow H_c^{n-k}(M;\R) \congrightarrow H_{\dRc}^{n-k}(M),
\] 
where the first map is induced by the inclusion $S \hookrightarrow M$, the second map is the Poincaré duality isomorphism and the third map is the de Rham isomorphism for cohomology with compact support.

The Poincaré dual of a transverse intersection of submanifolds is related to the wedge product as follows (see \cite[p.~69]{Bott-Tu}):

\begin{proposition}[Geometric interpretation of the wedge product] \label{thm:wedge-product-intersection}
Let $M$ be a smooth oriented manifold. Then for any two closed oriented submanifolds $S_1$ and $S_2$ of $M$ that intersect transversally, we have
\begin{equation} \label{eq:wedge-product-intersection}
\eta_{S_1} \wedge \eta_{S_1} = \eta_{S_1 \cap S_2}.	
\end{equation}
\end{proposition}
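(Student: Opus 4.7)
The plan is to appeal to uniqueness of the closed Poincaré dual: since $\eta_{S_1\cap S_2}$ is characterized by the defining integral identity, it suffices to verify that $\eta_{S_1}\wedge\eta_{S_2}$ satisfies this identity, i.e. that for every $\omega\in H_{\dRc}^{k_1+k_2-n}(M)$ (where $n=\dim M$, $k_i=\dim S_i$, so $\dim(S_1\cap S_2)=k_1+k_2-n$ by transversality)
\[
\int_M \omega\wedge\eta_{S_1}\wedge\eta_{S_2} = \int_{S_1\cap S_2} i^*\omega,
\]
with $i\colon S_1\cap S_2\hookrightarrow M$ the inclusion.

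First I would replace $\eta_{S_2}$ by a representative supported in an arbitrarily small tubular neighborhood $U$ of $S_2$. Such a representative exists: identifying $U$ with (an open subset of) the normal bundle of $S_2$ in $M$ and choosing a Thom form on this bundle, the extension by zero to $M$ is a closed form of degree $n-k_2$ representing $\eta_{S_2}$. With this representative, $\omega\wedge\eta_{S_2}$ is compactly supported of degree $k_1$. Re-associating and invoking the defining property of $\eta_{S_1}$ (up to a sign coming from graded commutativity, which ultimately matches the orientation convention on the transverse intersection),
\[
\int_M \omega\wedge\eta_{S_1}\wedge\eta_{S_2} = \pm\int_M (\omega\wedge\eta_{S_2})\wedge\eta_{S_1} = \pm\int_{S_1} i_1^*\omega\wedge i_1^*\eta_{S_2},
\]
where $i_1\colon S_1\hookrightarrow M$.

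The geometric heart of the proof is the identity
\[
i_1^*\eta_{S_2} = \eta^{S_1}_{S_1\cap S_2},
\]
i.e.\ that the pullback to $S_1$ of the Poincaré dual of $S_2$ in $M$ is the Poincaré dual of $S_1\cap S_2$ computed inside $S_1$. This is where transversality is essential: by choosing the tubular neighborhood $U$ so that $U\cap S_1$ is itself a tubular neighborhood of $S_1\cap S_2$ in $S_1$ (possible because $T_xS_1 + T_xS_2 = T_xM$ at every intersection point, so the normal bundle of $S_2$ restricted to $S_1\cap S_2$ is canonically isomorphic to the normal bundle of $S_1\cap S_2$ in $S_1$), the chosen Thom form of the normal bundle of $S_2$ restricts under $i_1$ to a Thom form of the normal bundle of $S_1\cap S_2$ in $S_1$. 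Once this identification is in hand, applying the defining integral formula of the Poincaré dual \emph{inside} $S_1$ yields
\[
\int_{S_1} i_1^*\omega\wedge i_1^*\eta_{S_2} = \int_{S_1\cap S_2} i^*\omega,
\]
which closes the argument.

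The main obstacle is the identification $i_1^*\eta_{S_2}=\eta^{S_1}_{S_1\cap S_2}$, which rests on the compatible choice of tubular neighborhoods just described and on the naturality/uniqueness of the Thom class. Once this compatibility is set up carefully, orientation conventions on transverse intersections absorb the sign introduced above and everything reduces to the formalism already established for single submanifolds.
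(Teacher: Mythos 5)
The paper does not give its own proof of this proposition; it simply cites \cite[p.~69]{Bott-Tu}. Your argument --- representing $\eta_{S_2}$ by a Thom form supported in a tubular neighborhood, using transversality to identify the restriction $i_1^*\eta_{S_2}$ with the Poincar\'e dual of $S_1\cap S_2$ inside $S_1$, and then applying the defining integral identity twice --- is precisely the standard Bott--Tu argument being invoked, so it is correct and takes essentially the same approach as the source.
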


In the above proposition, we equip $S_1 \cap S_2$ with the canonical orientation induced by the orientations on $M$, $S_1$, and $S_2$. If $S_1$ and $S_2$ are of complementary dimensions in $M$, then this orientation is simply given by the intersection numbers of $S_1$ and $S_2$.

We now use this to finish the proof of our main result.
Recall that we call a Riemannian manifold $M$ covered by $(\H^2)^r$ \term{arithmetic} if it is finitely covered by a quotient of $(\H^2)^r$ by an arithmetically defined lattice in $\SL_2(\R)^r$.
The following theorem completes our proof of \cref{thm:main}:

\begin{theorem} \label{thm:main-arithmetic}
Let $M$ be an arithmetic Riemannian manifold covered by $(\H^2)^r$. Then for any $n \in \N$, there exists a connected finite covering $M' \to M$ and compact oriented flat totally geodesic $r$-dimensional submanifolds $\mathcal{F}_1,\ldots,\mathcal{F}_n \subset M'$ such that the images of the fundamental classes $[\mathcal{F}_1],\ldots,[\mathcal{F}_n]$ in $H_r(M';\Z)$ are linearly independent.
\end{theorem}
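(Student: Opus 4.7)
The plan is to assemble the results of \cref{sec:geometric-construction}. By the reduction in the introduction, it suffices to treat $M = \Gamma\backslash(\H^2)^r$ under the standing assumptions of this section. We first apply \cref{thm:configuration} to obtain maximal flats $A_1,\dots,A_n,B_1,\dots,B_n \subset (\H^2)^r$ together with stabilizing elements $\alpha_i \in D^1$ and $\beta_j \in D^\times$ satisfying the prescribed intersection pattern: $A_i\cap B_j$ is a single transverse point when $i\le j$, and empty when $i>j$. For each pair $(i,j)\in\{1,\dots,n\}^2$, we apply \cref{thm:product-formula-corollary} with $A=A_i$, $B=B_j$, $\alpha=\alpha_i$, $\beta=\beta_j$ to obtain a finite-index subgroup $\Gamma^{(i,j)}_\product\subset\Gamma$, and set $\Gamma' := \bigcap_{i,j}\Gamma^{(i,j)}_\product$, which is again of finite index in $\Gamma$. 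Define $M':=\Gamma'\backslash(\H^2)^r$; then $M'\to M$ is a connected finite covering.

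By \cref{thm:embedding}, the images $\mathcal{F}_i$ of $A_i$ and $\mathcal{G}_j$ of $B_j$ in $M'$ are closed, embedded, orientable, flat, totally geodesic $r$-dimensional submanifolds, and each $\mathcal{F}_i$ is compact because $A_i$ is $\Gamma$-compact. We fix orientations on them as in \cref{rem:induced-orientations}. \cref{thm:intersection}, applied to each pair $(A_i,B_j)$, then shows that $\mathcal{F}_i\cap\mathcal{G}_j$ is empty for $i>j$, and that for $i\le j$ this intersection is transverse, finite, and carries the same (hence nonzero) local intersection sign at every point. In particular, the intersection number $[\mathcal{F}_i]\cdot[\mathcal{G}_j]$ vanishes for $i>j$ and is nonzero for $i\le j$.

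To deduce linear independence of $[\mathcal{F}_1],\dots,[\mathcal{F}_n]$ in $H_r(M';\R)$, we pair them against the closed Poincaré duals $\eta_{\mathcal{G}_j}\in H_{\dR}^r(M')$. Since each $\mathcal{F}_i$ is compact and $M'$ is diffeomorphic to the interior of a compact manifold with boundary (via the Borel--Serre compactification of the arithmetic quotient), the compact Poincaré dual $\eta'_{\mathcal{F}_i}\in H_{\dRc}^r(M')$ exists and represents $[\mathcal{F}_i]$ under Poincaré duality. Combining \eqref{eq:poincare-dual} with \cref{thm:wedge-product-intersection} yields
\[
\int_{M'}\eta'_{\mathcal{F}_i}\wedge\eta_{\mathcal{G}_j} \;=\; [\mathcal{F}_i]\cdot[\mathcal{G}_j],
\]
so the $n\times n$ matrix $\bigl([\mathcal{F}_i]\cdot[\mathcal{G}_j]\bigr)_{i,j}$ is upper triangular with nonzero diagonal and therefore invertible. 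Any linear relation $\sum_i c_i[\mathcal{F}_i]=0$ then forces $\sum_i c_i\,([\mathcal{F}_i]\cdot[\mathcal{G}_j])=0$ for every $j$, whence all $c_i=0$.

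The main technical obstacle has essentially been packaged into \cref{thm:product-formula-corollary}; once that is granted, what remains is bookkeeping together with the standard Poincaré duality pairing. The only further point worth noting is the existence of compact Poincaré duals on the noncompact manifold $M'$, which rests on the standard fact that arithmetic quotients of $(\H^2)^r$ admit a Borel--Serre-type compactification.
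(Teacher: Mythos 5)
Your proposal is correct and follows essentially the same route as the paper: reduce to the standing setup of \cref{sec:geometric-construction}, invoke \cref{thm:configuration} and \cref{thm:intersection} for the flats, and conclude via the Poincaré duality pairing and \cref{thm:wedge-product-intersection}. You merely spell out two details the paper leaves implicit — taking $\Gamma'$ to be the intersection of the groups $\Gamma^{(i,j)}_{\product}$ over all pairs, and justifying the existence of compact Poincaré duals via a Borel--Serre-type compactification — both of which are accurate and harmless.
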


\begin{proof}
As explained in the beginning of this section, we can and will assume that $M = \Gamma \backslash (\H^2)^r$, where $\Gamma \subset \Lambda^1$ is a torsion-free subgroup of finite index. We assume that all the assumptions introduced there are satisfied.
By \cref{thm:configuration}, there exist maximal flats $A_1,\ldots,A_n$ and $B_1,\ldots,B_n$ in $(\H^2)^r$ such that for all $i,j \in \{1,\ldots,n\}$, \cref{thm:intersection} can be applied to the flats $A = A_i$ and $B = B_j$.
So there exists a subgroup of finite index $\Gamma' \subset \Gamma$ such that the images of $A_1,\ldots,A_n$ and $B_1,\ldots,B_n$ in $M' := \Gamma' \backslash (\H^2)^r$ are closed orientable flat totally geodesic $r$-dimensional submanifolds.
We denote them by $\mathcal{F}_1,\ldots,\mathcal{F}_n$ and $\mathcal{G}_1,\ldots,\mathcal{G}_n$, respectively, and choose orientations on them as in \cref{rem:induced-orientations}.
By \cref{thm:wedge-product-intersection}, we have
\[
\int_{M'} \eta'_{\mathcal{F}_i} \wedge \eta_{\mathcal{G}_j}
= \int_{M'} \eta_{\mathcal{F}_i} \wedge \eta_{\mathcal{G}_j}
= \int_{M'} \eta_{\mathcal{F}_i \cap \mathcal{G}_j}
= \sum_{p \in \mathcal{F}_i \cap \mathcal{G}_j} I_p(\mathcal{F}_i, \mathcal{G}_j).
\]
Furthermore, by what we know about the intersections of $\mathcal{F}_i$ and $\mathcal{G}_j$ from \cref{thm:intersection}, this sum is nonzero if and only if $\mathcal{F}_i \cap \mathcal{G}_j \neq \emptyset$, which is the case if and only if $i \leq j$. It follows that the matrix
\[
\biggl( \int_{M'} \eta'_{\mathcal{F}_i} \wedge \eta_{\mathcal{G}_j} \biggr)_{ij} \!\in M_n(\R)
\]
is upper triangular with nonzero entries on the diagonal, hence is in $\GL_n(\R)$. Since the map $H_{\dRc}^r(M') \times H_{\dR}^{r}(M') \to \R$, $(\omega,\tau) \mapsto \int_{M'} \omega \wedge \tau$ is bilinear, it follows that the cohomology classes $
\eta'_{\mathcal{F}_1}, \ldots, \eta'_{\mathcal{F}_n} \in H_{\dR,c}^{r}(M')$ are linearly independent. They are mapped by the Poincaré duality isomorphism to the images of the real fundamental classes of $\mathcal{F}_1,\ldots,\mathcal{F}_n$ in $H_r(M';\R)$, and so these homology classes are also linearly independent. Thus, the same holds for the images of the integral fundamental classes $[\mathcal{F}_1], \ldots, [\mathcal{F}_n]$ in $H_r(M';\Z)$.
\end{proof}

\section*{Acknowledgments} \label{sec:acknowledgments}

I wish to thank my doctoral advisor Enrico Leuzinger for many helpful comments and discussions. Further, I want to thank Stefan K\"uhnlein for reading an early manuscript of this work and giving important feedback, Roman Sauer, Steffen Kionke, and Benjamin Waßermann for answering questions and helpful discussions, and Aurel Page for bringing my attention to the Grunwald--Wang theorem.

\bibliographystyle{spmpsci}
\bibliography{literature}

\end{document}